\DeclareMathOperator*{\argmin}{\arg \min}%
\newcommand{\tv}[1]{|#1|_{BV(\Omega)}}%
\newcommand{\dx}{\:\mathrm{d}x}
 \newtheorem{theorem}{Theorem}
  \newtheorem{lemma}[theorem]{Lemma}
  \newtheorem{proposition}[theorem]{Proposition}
  \newtheorem{definition}[theorem]{Definition}
\newtheorem{rem}[theorem]{Remark}
\title[Motion-Corrected Reconstruction of Density Images]{A Nonlinear Variational Approach to \\ Motion-Corrected Reconstruction of Density Images} 
\author{Martin Burger$^{\dag \S}$ \and Jan Modersitzki$^{\ddag \P}$ \and  Sebastian Suhr$^{\dag \ddag}$}
\thanks{$^\dag$ Institute for Computational and Applied Mathematics, University of M\"unster, Einsteinstr. 62, 48149
M\"unster, Germany.}
\thanks{$^\S$ Cells in Motion Cluster of Excellence, University of M\"unster}
\thanks{$^\ddag$ Institute of Mathematics and Image Computing, University of L\"ubeck, Maria-Goeppert-Straße 3, 23562 L\"ubeck, Germany}
\thanks{$^\P$ Fraunhofer MEVIS Project Group Image Registration, Maria-Goeppert-Straße 3, 23562 L\"ubeck, Germany\thanks{This work was
supported by the Deutsche Forschungsgemeinschaft (DFG) through grants BU 2327/8-1 and  MO-1053/2-1, as well as by ERC via Grant EU FP 7 - ERC Consolidator Grant
615216 LifeInverse. MB acknowledges further support by the German Science Foundation DFG via
EXC 1003 Cells in Motion Cluster of Excellence, M\"unster, Germany}}
\begin{document}
\maketitle

\begin{abstract}
The aim of this paper is to establish a nonlinear variational approach to the reconstruction of moving density images from indirect dynamic measurements. Our approach is to model the dynamics as a hyperelastic deformation of an initial density including preservation of mass. Consequently we derive a variational regularization model for the reconstruction, which - besides the usual data fidelity and total variation regularization of the images - also includes a motion constraint and a hyperelastic regularization energy.

Under suitable assumptions we prove the existence of a minimizer, which relies on the concept of weak diffeomorphisms for the motion. Moreover, we study natural parameter asymptotics and regularizing properties of the variational model. Finally, we develop a computational solution method based on alternating minimization and splitting techniques, with a particular focus on dynamic PET. The potential improvements of our approach compared to conventional reconstruction techniques are investigated in appropriately designed examples.
\end{abstract}

\pagestyle{myheadings}
\thispagestyle{plain}

\section{Introduction}
With current advances of imaging devices, in particular in biomedical imaging, dynamic studies including motion attract more and more attention. Prominent examples are live microscopy, MRI, and emission tomography (PET and SPECT). For the latter  cardiac applications are particularly interesting, which are subject to cardiac and respiratory motion. In most of such applications a density image (e.g. the density of a radiochemical or fluorescent tracer), which is subject to motion between consecutive time steps, is to be reconstructed from indirect measurements. This problem shall be tackled with a novel variational approach in this paper. 

The need for advanced motion-corrected reconstruction methods is caused by the fact that separate reconstructions in small time intervals - in which the motion effect is negliglible - are of inferior quality. The latter problem is either caused by severe undersampling (e.g. in MRI due to time restrictions in acquiring slice data) or the bad signal-to-noise ratio (e.g. in optical imaging and emission tomography techniques being based on counting photons) in small time steps. Several recent approaches exploiting sparsity properties of the images to be reconstructed achieve strong improvements in these two situations (cf. MR compressed sensing \cite{lustig2007sparse}, Bregman-EM-TV for PET \cite{muller2011reconstruction}). However, those approaches are still limited in certain practical situations, and since separate reconstruction in small time scales does not exploit natural temporal correlation, spatio-temporal image reconstruction methods are an obvious next step.

Several approaches tried to obtain improved reconstructions by appropriately averaging reconstructions at different time steps. To do so, the motion between those time steps needs to be estimated, which can be performed either by flow-type (optical flow) techniques  \cite{Dawood2006} or by image registration  \cite{Klein2000}. Subsequently the motion in the image can be corrected and improved images can be obtained by averaging, cf. e.g. \cite{Klein2000} for an implementation in a clinical PET setup. Further improvements are often achieved by iterating a reconstruction step (in separate time steps) and the motion estimation, where some motion-corrected density (inital or average) serves as a prior for the next reconstruction step. A remaining disadvantage are artefacts that can be created this way as well as unclear convergence properties of such alternating iteration approaches. Those can be cured by formulating an appropriate variational model, which is then minimized by an alternating iteration method. In this way consistent and convergent iteration schemes can be obtained and the properties of the limits can be understood from the structure of the underlying variational problem. A key issue is that smoothness and smallness of deformations or velocities can be used as reasonable priors (via appropriate energy terms in the variational problems), which finally leads to physically reasonable reconstructions with significantly improved signal-to-noise ratio. Examples 
are the approaches of \cite{B.A.Mair2006,Blume2010TMI,Jacobson2003}  to motion-corrected PET reconstruction.

Similar as in \cite{B.A.Mair2006,Blume2010TMI} we are going to minimize a functional of the form
\begin{equation}
 J(\rho,y)=	\sum_{i=0}^N \left( D(K\rho^i,f^i) + \alpha^i R_I(\rho^i) + \beta^i R_M(y^i) \right)
\end{equation}
for an appropriate transformation model. Here $D$ is the data fidelity between estimated and measured data, and $R_I$ respectively $R_M$ are regularization functionals on the image respectively motion (with nonnegative regularization parameters $\alpha^i$ and $\beta^i$).

In our setup we reconstruct a sequence of nonnegative densities $\rho^0,\ldots,\rho^N$ on $\Omega \subset \mathbb{R}^d$, typically $d=2,3$ such that
\begin{equation}
	\rho^i(x)=\rho^0(y^i(x)) \operatorname{det}(\nabla y^i(x)), 
	\label{eq:transformation_model}
\end{equation}
with a reasonably smooth deformation field $y^i: \mathbb{R}^d \rightarrow \mathbb{R}^d$. The different densities are thought of as evaluations at (ascending) time steps $t^i$.

The measurements $f^i$ are noisy versions of the projected image at the$i-$th time step $K(\rho^i)$, with a stationary forward operator $K$, e.g. a convolution in fluorescence microscopy or versions of the x-ray transform in PET and SPECT. 
Motivated by the above applications, in particular cardiac PET, we will focus on noisy data drawn from Poisson statistics, i.e. $f^i(x)$ is interpreted as a Poisson distributed random variable with  expected value $ \tau ~(K \rho^i)(x)$, where $\tau$ is a typical length of the time interval. However, extension to various other commonly used noise models, such as additive noise or additional background noise, is straight-forward in our variational setting by simply exchanging a data fidelity term related to the negative log-likelihood of the noise distribution. 

The main contributions of the paper are the following:

\medskip

\begin{itemize}
\item An appropriate modelling of the mass-conserving density transformation and its implementation in the reconstruction process, which has not been considered previously. 

\item A model with suitable regularization functionals for images with edges (total variation) and for physically reasonable, (weakly) one-to-one deformations (hyperelastic). 

\item A detailed mathematical analysis of the arising variational problem in a function space setting.

\item The construction and implementation of computational methods for a setup in cardiac PET, with applications to realistic datasets. 
\end{itemize}

\medskip
As we shall see below, a major issue in the analysis of the model is the fact that we only deal with BV images and Sobolev deformations, whose composition is not defined in a straight-forward way and continuity with respect to weak convergence is a rather difficult issue.
The analysis section is therefore split into two main parts: In the first we present some results from geometric measure theory, which allow for a generalized version of the change of variables formula. With help of this formula we are able to prove weak $L^1$ convergence of the composition of $BV$ functions with Sobolev mappings (Theorem \ref{theorem:konvergenz}). This convergence result is the backbone of the existence result we formulate for injective transformations (Theorem \ref{theorem:Existence_mcr_functional}).

The next section lays the focus on our numerical optimization. Since we assume time discrete data, we pick a standard space discretization and perform a First-Discretize-Then-Optimize approach. Similar to \cite{B.A.Mair2006} we end up with an alternating two step minimization: The first step is reconstruction with a fixed motion estimation, while we optimize the motion estimation with a fix reconstructed image in the second. We describe both steps in detail and show that we can perform standard EM-TV algorithms also with motion corrected reconstruction. The minimization in the motion step leads to image registration, but we present a new distance measure which is defined on the detector domain. Despite from that we follow the widely known FAIR-framework \cite{Modersitzki2009} for the registration.

\section{Motion Models and Variational Formulation}

In the following we discuss the appropriate mathematical modelling of motion-corrected reconstruction of density images. Our goal is to obtain the reconstructed image, which is a sequence of nonnegative,integrable probability densities
\begin{equation}
	\rho = (\rho^0,\rho^1,\ldots,\rho^N) \in L_+^1(\Omega)^{N+1}
\end{equation}
as well as the motion (deformation)
\begin{equation}
	y = (y^1,\ldots,y^N) \in H^1(\Omega)^N
\end{equation} 
as a minimizer of a suitable variational problem. We will further restrict the set of admissible deformations below and sometimes also look for densities in different spaces, e.g. as functions of bounded variation or relaxed as probability measures instead of densities. We will investigate the following variational model: Minimize
\begin{equation}
 J(\rho,y) = \sum_{k=0}^N \left( D(K\rho^i,f^i) + \alpha^i \tv{\rho^i} \right) + \sum_{k=1}^N \beta^i S^{hyper}(y^i) 
\label{eq:functional}
\end{equation} 
over nonnegative $\rho$ subject to \eqref{eq:transformation_model}. Here $K:L^1(\Omega) \rightarrow Y$ is assumed to be a compact operator and the $f^i$ are noisy measurements at time $t^i$, we refer to the subsequent sections for precise definition of the BV-seminorm and $S^{hyper}$. This can be interpreted as a MAP estimate for a (formal) Bayesian model as we demonstrate in the following.

\subsection{Bayesian Modelling}

 In order to derive a variational model we can resort to a formal Bayesian model, where for simplicity we use the above notations. For a rigorous modelling, discretized versions of the above variables or infinite-dimensional probability distributions should be used, which is beyond the scope of this paper. Bayes theorem yields the posterior probability density
\begin{equation}
	{\bf P}((\rho,y)|f) \sim {\bf P}(f|\rho) ~{\bf P}(\rho|y)~{\bf P}(y) , 
\end{equation}
where ${\bf P}(f|\rho)$ is the likelihood, which we simply assume as 
\begin{equation}
{\bf P}(f|\rho) = \prod_{i=0}^N {\bf L}_n (K\rho^i,f^i), 
\end{equation}
where ${\bf L}_n$ is the stationary noise likelihood of observing $f$ given $\rho$. Note that in our model we make the natural assumption that ${\bf L}_n$ does not depend on $y$ explicitely, since the image formation and hence the involved noise is considered as an instantaneous snapshot given some density at time $t^i$. For data obtained by collecting information over a larger time interval (or even the full interval $(t^{i-1},t^i]$, the modelling needs to be changed at this point.

The prior probability density for the image sequence given the deformations is given by
\begin{equation}
	{\bf P}(\rho|y) \sim  {\bf P}_0(\rho^0) \prod_{k=1}^N \varepsilon(\rho^i - \rho^0(y^i) \text{det}(\nabla y^i) ) {\bf P}_i(\rho^i),
\end{equation}
where $\varepsilon$ is the concentrated measure (centered at the origin) and ${\bf P}_i$ is some a-priori probability for the image at time step $i$.  Finally, ${\bf P}(y)$ is a prior probability density on the deformation sequence. 

In order to compute a maximum a-posteriori probability (MAP) estimator, we need to restrict to the set of images and deformations satisfying \eqref{eq:transformation_model} due to the concentrated measure and minimize the negative log-likelihood of the remaining factors in the density. With $D(K\rho,f) = - \log 
{\bf L}_n (K\rho^i,f^i)$ we obtain a problem of the form
\begin{equation}
\min\limits_{\rho,y} \sum_{i=0}^N \left( D(K\rho^i,f^i) - \log {\bf P}_i(\rho^i) \right)  -  \log {\bf P}(y).
\end{equation}
As usual the negative logarithms of the prior probabilities are related to regularization terms in standard inverse problems theory \cite{Kaipio2005}. Hence, we can directly specify those using appropriate models. Since we aim at reconstructing images with sharp edges we employ total variation regularization on the image, i.e., 
\begin{equation}
	- \log {\bf P}_i(\rho^i) = \alpha^i \tv{\rho^i}
\end{equation}
For the deformation we use a hyperelastic energy (cf. \cite{Ruthotto2012a}), i.e., 
\begin{equation}
	 -  \log {\bf P}(y) = \sum_{i=1}^N \beta^i S^{hyper}(y^i),
\end{equation}
which is natural since it also regularizes the Jacobian determinant det$(\nabla y^i)$. Note that hyperelastic regularization also enforces orientation preservation and thus $det(\nabla y) >0$ a.e. \cite{Burger2013}.
In detail the hyperelastic energy is given by 

\begin{equation}
S^{\text{hyper}}(y) \ = \ \int\limits_\Omega{\alpha_1 \operatorname{len}(\nabla y) \: + \: \alpha_2 \operatorname{surf}(\operatorname{cof}(\nabla y)) \: + \: \alpha_3 \operatorname{vol}(\text{det}(\nabla y))} \dx \ ,
\end{equation}
with the penalty functions
\begin{equation*}
\operatorname{len}(s) = \|s - \mathcal{I}\|_{\text{Fro}}^2 \ , \qquad \operatorname{surf}(s) = \left(\max(\|s\|_{\text{Fro}}^{2}-3,0)\right)^{2}, \qquad  \operatorname{vol}(s) = \left\{  \begin{array}{ll} \frac{(s-1)^{4}}{s^2} & \text{if } s > 0 \\ + \infty & \text{else} \end{array}  \right. \ .
\end{equation*}

The three terms punish deviations from the identity and in volume, length and surface. This energy enforces locally one-to-one transformations, but there might be globally non injective transformations, see \cite{Suhr2015}.

Having defined regularization for the density as well as for the motion field, we look for a minimizer of \eqref{eq:functional}, subject to \eqref{eq:transformation_model}

In the particular case of Poisson data, the data fidelity equals the Kullback-Leibler divergence (for $K$ being a continuous operator from $L_+^1(\Omega)$ to $L^1_+(\Sigma)$) up to a constant independent of $f^i$, more precisely
\begin{equation}
D(K\rho^i,f^i) = {\int\limits_{\Sigma}{K \rho^i -f^i\log(K \rho^i )d\sigma}} \label{eq:datafidelity}
\end{equation}

\section{Analysis}

This section is devoted to the analysis of the functional we derived in the previous section. Resulting from the mass-preservation condition we derived a functional with transformed images of the form $\rho(y)\operatorname{det}(\nabla y)$. The transformation theorem for integrals is a powerful tool for dealing with such transformations. Unfortunately the hyperelastic regularization we imposed on the transformation does not guarantee diffeomorphic transformations in the classical sense. Thus we describe the relaxation of classical infinitesimal calculus to Sobolev mappings, before we focus on the presentation of our analytical results.

\subsection{Preliminary Results}

In this section we generalize known definitions from the classical infinitesimal calculus to equivalence classes of functions in Lebesque- respectively Sobolev spaces (compare for example \cite{Giaquinta1992,Hajlasz1993,Giaquinta1998a} for a further course on this matter). Our final goal is to derive a version of the transformation theorem for integrals for non-diffeomorphic functions. Since we can not distinguish functions differing on zero sets, the classical definition of differentiability is not a feasible way, because we would like to obtain the same result for all representatives of the equivalence class. A natural way to define a coherent function value for all representatives is via averaging, which leads to the following definition. 

\begin{definition}
Let $\Omega$ be a domain and $ y \in L^1(\Omega)$. Then the set of points $x$ for which $y_l(x)$ exists, such that
\begin{equation}
\fint\limits_{B(x,r)}|y(z)-y_l(x)|\mathrm{d}z \rightarrow 0 \qquad \text{as} \qquad r\rightarrow 0
\end{equation}
is called the Lebesgue set $\mathscr{L}_y$, while the points in $\mathscr{L}_y$ are called Lebesgue points.
\end{definition} 
\begin{rem}\hspace{1 mm} \\
\vspace{-0.5cm}
\begin{itemize}
\item It is known  that the complement of the Lebesgue points is a zero set \cite[Thm. 2.19]{Giaquinta2009}. 
\item For $y \in W^{1,1}_{loc}$ we can define the set of Lebesgue points $\mathscr{L}_{Dy}$ for the derivative $Dy$ analogously. 
\item If $y$ is a vector-valued function, we say $x$ is a Lebesgue point, iff it is a Lebesgue point for each component function.
\end{itemize}
\end{rem}

Differentiability can be generalized in a similar way \cite{Ambrosio2000}:

\begin{definition}[Approximate differential for $L^1_{loc}$]
Let $y\in L^1_{loc}(\Omega,\mathbb{R}^m)$ and let $x\in \mathscr{L}_y$; we say that $y$ is approximately differentiable at $x$, iff there exists a $d\times m$ matrix $L$, such that
\begin{equation}
\fint\limits_{B_r(x)}\frac{|y(z)-y_l(x)-L(z-x)|}{r}\mathrm{d}z \rightarrow 0 \qquad \text{as} \qquad r\rightarrow 0.
\end{equation}
\end{definition}

To overcome difficulties arising from changing functions on a zero set, Nikolai Lusin imposed in his dissertation \cite{Luzin1915} the so called Lusin condition, also known as N-condition: 

\begin{definition}[Lusin's condition]
A mapping $y: \Omega \rightarrow \mathbb{R}^{d}$ satisfies Lusin's condition, iff:
\begin{equation}
\label{eq:lusin_condition}
\forall E \subset \Omega,~ \lambda(E) = 0 \Rightarrow \lambda(y(E))=0
\end{equation}
with $\lambda$ denoting the Lesbesgue-measure. 
\end{definition}

Having surmounted these difficulties we need to take in account, that a non-diffeomorphic function may hit some points several times. For this problem Stefan Banach introduced the so called Banach indicatrix, which gives the number of roots to an equation \cite{Banach1925}. This concept was generalized to discontinuous functions by Lozinski \cite{Lozinski1958} and into higher dimensions by Kronrod \cite{Kronrod1950} and  Vitushkin in his master thesis \cite{Vitushkin1955}:

\begin{definition}[Banach indicatrix]
Let $y:\mathbb{R}^{d} \rightarrow \mathbb{R}^{m}$, $E\subset \mathbb{R}^{d}$. The Banach indicatrix  is a function
\begin{equation*}
N_y(\cdot,\Omega) : \mathbb{R}^{m} \rightarrow \mathbb{N}_{0} \cup \{\infty\},
\end{equation*} 
which is given by
\begin{equation}
N_y(z,E):=\operatorname{card}(\{y^{-1}(z) \cap \Omega\}).
\end{equation}
\end{definition}

We are now ready to present a change of variables formula under minimal assumptions, which was given by Hajlasz \cite{Hajlasz1993}. The central idea is, that points hit multiple times by the transformation need to be taken into account as multiplicative factor.

\begin{theorem}[Area formula]
\label{theorem:area_formula}
Let $y: \mathbb{R}^d \rightarrow \mathbb{R}^{m}$ be a mapping. If $y$ is approximately differentiable almost everywhere, then $y$ can be redefined on a zero set, such that the new $y$ fulfills  Lusin's condition. Furthermore the following statements hold for every measurable subset $\Omega$ and positive measurable function $u:\tilde{\Omega} \rightarrow \mathbb{R}$:

\begin{itemize}
\item[(i)] The functions $u(y)\operatorname{det}(\nabla y)$ and $u(z)N_y(z,\Omega)$ are measurable.
\item[(ii)] If moreover $u \geq 0$ then
\end{itemize}
\begin{equation}
\int\limits_{\Omega}{u(y(x))\operatorname{det}(\nabla y(x))\mathrm{d}x}=\int\limits_{\mathbb{R}^{m}}{u(z)N_y(z,\Omega)\mathrm{d}z}.
\label{eq:area_formula}
\end{equation}
\begin{itemize}
\item[(iii)] If one of the functions $u(y)\operatorname{det}(\nabla y)$ and $u(z)N_y(z,E)$ is integrable then so is the other and the formula \eqref{eq:area_formula} holds.
\end{itemize}

Additionally we have
\begin{equation}
\int\limits_{\Omega}u(x)\operatorname{det}(\nabla y(x))\mathrm{d}x=\int\limits_{\mathbb{R}^{d}}\sum\limits_{w\in ( y^{-1}(z)\cap \Omega)}u(w)\mathrm{d}z.
\end{equation}

\end{theorem}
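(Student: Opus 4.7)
The plan is to reduce the theorem to the classical area formula for Lipschitz mappings (Federer) via a Whitney-type decomposition of $\Omega$, and then pass to the general positive measurable $u$ by monotone convergence. First, using a standard geometric-measure-theoretic covering argument (Federer, Theorem 3.1.16), I decompose $\Omega = Z \cup \bigcup_{k\in\mathbb{N}} \Omega_k$, where $Z$ has Lebesgue measure zero (and collects the points where $y$ fails to be approximately differentiable together with non-Lebesgue points of $y$ and $\nabla y$), and each $\Omega_k$ is a measurable set on which $y$ agrees almost everywhere with a globally Lipschitz map $y_k: \mathbb{R}^d \to \mathbb{R}^m$ whose classical gradient equals the approximate gradient $\nabla y$ a.e. on $\Omega_k$.

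Next, I redefine $y$ on the null set $Z$ (for instance extending by the Lipschitz pieces $y_k$, or simply setting $y|_Z\equiv 0$ on the leftover portion) so that Lusin's condition is satisfied. Since Lipschitz maps send null sets to null sets, the image of the modified $y$ over $Z$ is a null set in $\mathbb{R}^m$. This simultaneously establishes the first assertion of the theorem and ensures that $N_y(\cdot,\Omega) = \sum_k N_{y_k}(\cdot,\Omega_k)$ almost everywhere, since the multiplicities contributed by $Z$ can be discarded after modification.

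Applying the classical Lipschitz area formula of Federer to each piece $y_k$ then yields, for $u\ge 0$,
\begin{equation*}
\int_{\Omega_k} u(y(x))\operatorname{det}(\nabla y(x))\,\mathrm{d}x = \int_{\mathbb{R}^m} u(z)\, N_{y_k}(z,\Omega_k)\,\mathrm{d}z.
\end{equation*}
Summing over $k$ and invoking monotone convergence on the right-hand side (together with the null set $Z$ being negligible on the left) gives (ii). The measurability statements (i) follow from the corresponding measurability results for each Lipschitz piece and the additive decomposition of $N_y$. Assertion (iii) is then a standard consequence obtained by decomposing $u = u^+ - u^-$ and applying (ii) to each part. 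The additional final identity is handled analogously: on each Lipschitz piece one has $\int_{\Omega_k} u(x)\operatorname{det}(\nabla y(x))\,\mathrm{d}x = \int_{\mathbb{R}^m} \sum_{w\in y_k^{-1}(z)\cap\Omega_k} u(w)\,\mathrm{d}z$ by the classical formula, and summation over $k$ together with the essential disjointness of the $\Omega_k$ modulo $Z$ yields the claim.

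The principal obstacle is the Whitney-type decomposition together with the modification that enforces Lusin's condition — this is the genuinely nontrivial step, relying on the fact that almost everywhere approximate differentiability is strong enough to guarantee countably many Lipschitz representatives covering $\Omega$ up to a null set. Once that reduction is in place, the remainder of the proof is essentially a bookkeeping exercise combining Federer's classical area formula with monotone convergence. A secondary technical point is verifying that the redefinition on the null set does not alter $N_y$ almost everywhere in $\mathbb{R}^m$, which is precisely guaranteed by the Lusin property of the modified map.
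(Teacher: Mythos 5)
Your argument --- the countable decomposition of $\Omega$ into pieces on which $y$ agrees with Lipschitz maps (Federer/Whitney), redefinition on the residual null set to enforce Lusin's condition, the classical Lipschitz area formula on each piece, and summation by monotone convergence --- is exactly the proof of Haj\l asz's Theorem~2, which is precisely what the paper cites for this statement (together with Giaquinta--Modica--Sou\v cek for the final identity), so the two approaches coincide. The one caveat is that the classical Lipschitz formula yields $|\operatorname{det}(\nabla y)|$ rather than $\operatorname{det}(\nabla y)$, so your piecewise summation actually establishes the identity with the absolute value (as in Haj\l asz); the unsigned version written here additionally needs orientation preservation, a point both your sketch and the paper's statement pass over silently.
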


\begin{proof}
See \cite[Theorem 2]{Hajlasz1993} for the first part of the theorem. The second part is given in \cite[Chapter 1.2 Theorem 2]{Giaquinta1998a} for Lipschitz mappings and can be generalized by following the proof by Hajlasz \cite[Theorem 2]{Hajlasz1993}.  
\end{proof}

As we see, the generalization of the transformation theorem can take into account, that points might be hit several times. Consequently a mapping with no additional injectivity restriction might violate the mass-preservation condition.

Furthermore, this subject ist strongly linked to the topological degree, which was introduced by Browers in 1911 \cite{Brouwer1911} and generalized to Sobolev mappings  by Giaquinta et al.  \cite{Giaquinta1994a}. We will not focus on this topic, but we will use some of this results in the remaining part of this thesis. We start by giving the generalized definition of the topological degree, see also\cite{Giaquinta1994a}

\begin{definition}[Topological degree]
Let $\Omega\subset \mathbb{R}^{d}$ be an open set and $y$ an almost everywhere approximately differentiable map with Jacobian $Dy$ the degree of $y$ is defined as
\begin{equation}
\operatorname{deg}(y,\Omega,z):=\sum\limits_{x\in y^{-1}(z)}\operatorname{sgn}(\operatorname{det}(Dy(x)))
\end{equation}
\end{definition} 

\begin{rem}
The topological degree is strongly related to the Banach indicatrix, since it coincides with the Banach indicatrix for certain mappings. As a direct consequence of  \cite[Chapter 1, Proposition 2]{Giaquinta1994} we obtain for orientation preserving mappings:

\begin{equation}
\operatorname{deg}(y,\Omega,\cdot)=N_y(\cdot,\Omega) \qquad \text{a.e.}
\end{equation}

\end{rem}

An interesting property of the topological degree is that it is completely determined  on the boundary for sufficiently regular functions. This is phrased in the following proposition.

\begin{proposition}
\label{proposition:degree_boundary}
Let $\Omega$ be a bounded Lipschitz domain in $\mathbb{R}^d$ and let $y_1,y_2$ be mappings in $W^{1,d-1}(\Omega, \mathbb{R}^{d})$  with $\operatorname{cof}(\nabla y_i)\in L^{\frac{d}{d-1}}$. Suppose that $y_1=y_2$ text $\partial\Omega$ in the sense of $W^{1,d-1}$ traces. Then
\begin{equation*}
\operatorname{deg}(y_1,\Omega,z)=\operatorname{deg}(y_2,\Omega,z)
\end{equation*}
\end{proposition}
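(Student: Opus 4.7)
The plan is to show the degree admits a purely boundary representation and then invoke the common trace. For a smooth map $y$ and a regular value $z\notin y(\partial\Omega)$ I would start from the Kronecker-type formula
$$\operatorname{deg}(y,\Omega,z) = \int_\Omega \omega(y(x))\,\operatorname{det}(\nabla y(x))\,\dx,$$
valid for any $\omega\in C_c^\infty(\mathbb{R}^d)$ supported in a small ball around $z$ disjoint from $y(\partial\Omega)$ with $\int \omega = 1$. Choose $\Phi\in C_c^\infty(\mathbb{R}^d;\mathbb{R}^d)$ with $\operatorname{div}\Phi=\omega$. The Piola identity (each row of $\operatorname{cof}(\nabla y)$ is divergence-free) combined with the chain rule and the algebraic relation $\nabla y\cdot\operatorname{cof}(\nabla y)^T=\operatorname{det}(\nabla y)\,I$ then yields the divergence form
$$\omega(y)\operatorname{det}(\nabla y) = \operatorname{div}_x\bigl(\operatorname{cof}(\nabla y)^T \Phi(y)\bigr),$$
and the divergence theorem turns the Kronecker integral into
$$\operatorname{deg}(y,\Omega,z) = \int_{\partial\Omega} \Phi(y)\cdot\operatorname{cof}(\nabla y)\,\nu\, d\mathcal{H}^{d-1}.$$

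Next I would invoke the algebraic fact that $\operatorname{cof}(\nabla y)\,\nu$ on $\partial\Omega$ depends only on the tangential derivatives of $y|_{\partial\Omega}$: in a local tangent frame $\tau_1,\ldots,\tau_{d-1}$ it is (up to sign) the wedge product of the pushforwards $(\nabla y)\tau_\alpha$, whose entries are signed $(d-1)\times(d-1)$ minors of the tangential Jacobian of the trace. Hence both factors in the boundary integrand, $\Phi(y)$ and $\operatorname{cof}(\nabla y)\,\nu$, are determined solely by the trace $y|_{\partial\Omega}$.

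To upgrade from smooth maps to the Sobolev setting of the hypothesis, I would approximate each $y_i$ by smooth $y_i^n$ sharing a common boundary extension so that $y_1^n|_{\partial\Omega} = y_2^n|_{\partial\Omega}$ for every $n$, with $y_i^n\to y_i$ in $W^{1,d-1}$ and $\operatorname{cof}(\nabla y_i^n)\to \operatorname{cof}(\nabla y_i)$ in $L^{d/(d-1)}$. The hypothesis $\operatorname{cof}(\nabla y)\in L^{d/(d-1)}$ is exactly what is needed for the boundary integral to make sense through traces (the tangential minors lie in $L^{d/(d-1)}(\partial\Omega)$, paired with the $L^{d-1}$ trace of $y$) and for H\"older's inequality to justify passing to the limit on both sides of the representation. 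Once the boundary formula is known for $y_1$ and $y_2$ themselves, coincidence of the traces forces coincidence of the degrees.

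The main obstacle is this last approximation step: producing smooth approximants that simultaneously respect the common boundary trace and keep the cofactors converging in $L^{d/(d-1)}$, and then verifying the continuity of the degree along such sequences. These are precisely the points handled by the Cartesian currents machinery of Giaquinta--Modica--Sou\v{c}ek that is cited in the paper; in essence, the proposition is the assertion that the boundary of the graph current associated with a map with integrable cofactors is determined by the trace of the map, so the degree — read off from this boundary current — inherits the same dependence.
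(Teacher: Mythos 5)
The paper does not actually prove this proposition: its ``proof'' is the single line ``See [Giaquinta--Modica--Sou\v{c}ek], Chapter 2, Proposition 1'', so your sketch supplies strictly more detail than the source you are measured against. Your outline is the standard mechanism behind that cited result (Kronecker integral representation of the degree, Piola identity, reduction to a boundary integral whose integrand depends only on tangential derivatives of the trace), and your closing paragraph correctly identifies that the genuinely hard part --- giving meaning to $\operatorname{cof}(\nabla y)\,\nu$ through $W^{1,d-1}$ traces and passing to the limit along approximating sequences --- is exactly the content of the Cartesian-currents machinery that both you and the paper ultimately invoke. In that sense your route coincides with the paper's: it terminates in the same citation.

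There is, however, one concrete error and one hypothesis you use that is not available as stated. First, you cannot choose $\Phi\in C_c^\infty(\mathbb{R}^d;\mathbb{R}^d)$ with $\operatorname{div}\Phi=\omega$ when $\int\omega=1$, since $\int_{\mathbb{R}^d}\operatorname{div}\Phi\,\mathrm{d}z=0$ for every compactly supported $\Phi$. The fix is standard --- drop compact support (e.g.\ take $\Phi=\nabla u$ with $\Delta u=\omega$, which is smooth and bounded, and only its values near $y(\overline{\Omega})$ matter), or argue instead via the homotopy $y_t=ty_1+(1-t)y_2$ and show that $\tfrac{d}{dt}\int_\Omega\omega(y_t)\det(\nabla y_t)\,\mathrm{d}x$ is a boundary term vanishing when the traces agree --- but as written the divergence-form step does not start. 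Second, the trace of a $W^{1,d-1}(\Omega)$ map lies only in the fractional space $W^{1-1/(d-1),\,d-1}(\partial\Omega)$, so the claim that ``the tangential minors lie in $L^{d/(d-1)}(\partial\Omega)$'' does not follow from the stated hypotheses; this is precisely why the reference works with the boundary of the graph current (which is shown to be determined by the trace) rather than with a classical surface integral, and it is the step your argument would need to borrow wholesale from the cited proposition.
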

\begin{proof}
See \cite{Giaquinta1994}, Chapter 2, Proposition 1.
\end{proof}

\noindent
To conclude this brief summary, we mention that there is indeed a specification to injective Sobolev functions. This leads to the field of weak diffeomorphisms \cite{Giaquinta1989}. With Theorem \ref{thm:injectivity_closed_constraint} we will only present a compactness result, which is central for our analysis. A short introduction is given in the appendix.
A weak diffeomorphism $y$ can be expressed as the limit of a sequence $y_n$ of orientation preserving $C^1$ diffeomorphisms \cite{Giaquinta1989}. Furthermore closeness and compactness results for this class can be stated (\cite[Chapter 5, Theorems 3 and 4]{Giaquinta1994}). This relies heavily on weak convergence in the space of weak diffeomorphisms $\tilde{\operatorname{dif}}^{p,q}$. Thus we present a similar result published by Henao and Mora-Corral \cite{Henao2010} instead, which follows directly from \cite[Proposition 2 and Theorem 2]{Henao2010}.

\begin{theorem}[Injectivity as closed constraint]
\label{thm:injectivity_closed_constraint}
For each $j\in\mathbb{N}$ let $y_j,u :\Omega\subset \mathbb{R}^{d} \rightarrow \mathbb{R}^m$ be a.e. approximately differentiable. Assume furthermore, that
\begin{equation}
y_j \in W^{1,p}(\Omega,\mathbb{R}^m) \quad p\geq d-1 \qquad \operatorname{det}(D y_j)\in L^1(\Omega)
\end{equation}
as well as
\begin{equation}
\label{eq:cofactor_surface_energy}
\operatorname{cof}(D y_j)\in L^{g}(\Omega) \quad q \geq \frac{p}{p-1} \qquad \sup\limits_{j\in \mathbb{N}} \|\operatorname{cof}(D y_j)\|_1 < \infty.
\end{equation}
 Suppose that there exists $\theta\in L^1(\Omega)$ such that $\theta >0$ a.e. and
 \begin{equation}
 y_j \rightarrow y \qquad \operatorname{det}(D y_j)\rightharpoonup \theta \text{ in } L^1(\Omega) 
 \end{equation}
 as $j\rightarrow \infty$. Assume that for each $j\in \mathbb{N}$ the function $y_j$ we have $N_{y_j}\leq 1$ a.e. with $\operatorname{det}(D y_j)>0$ a.e.. Then
 \begin{itemize}
 \item[(i)] $\theta=|\operatorname{det}(D y)|$ a.e.,
 \item[(ii)] $N_y \leq 1$ a.e..
 \end{itemize}
\end{theorem}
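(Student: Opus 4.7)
The plan is to derive part (ii) by combining the area formula (Theorem~\ref{theorem:area_formula}) with a careful passage to the limit, while part (i) follows from weak continuity of the Jacobian determinant. Concretely, the a.e.\ bound $N_{y_j}\leq 1$ translates via the area formula into an integral inequality that should be preserved under weak $L^1$ convergence of $\det(Dy_j)$, provided one can handle the convergence of compositions $u(y_j)\det(Dy_j)$ against suitable test functions $u$.

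For part (i), I would identify $\theta$ with $\det(Dy)$ by invoking weak continuity of Jacobians in the regime $p \geq d-1$ with the stated cofactor integrability. A Piola-type identity expresses $\det(Dy_j)$ distributionally as a combination of cofactor minors (weakly convergent in $L^q$) paired against $y_j$ (strongly convergent by Rellich--Kondrachov), so $\det(Dy_j)\rightharpoonup \det(Dy)$ in $L^1(\Omega)$; uniqueness of weak limits forces $\theta=\det(Dy)$ a.e., and since $\theta>0$ by hypothesis we also deduce $\det(Dy)>0$ a.e., hence $\theta=|\det(Dy)|$.

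For part (ii), fix a nonnegative bounded continuous function $u$ with compact support in $\mathbb{R}^d$. The injectivity assumption $N_{y_j}\leq 1$ together with Theorem~\ref{theorem:area_formula} applied to each $y_j$ yields
\begin{equation*}
\int_\Omega u(y_j(x))\det(Dy_j(x))\,\mathrm{d}x \;=\; \int_{\mathbb{R}^d} u(z) N_{y_j}(z,\Omega)\,\mathrm{d}z \;\leq\; \int_{\mathbb{R}^d} u(z)\,\mathrm{d}z.
\end{equation*}
Passing to a subsequence with $y_j\to y$ a.e., continuity of $u$ gives $u(y_j)\to u(y)$ a.e.\ with uniform bound $\|u\|_\infty$; combined with uniform integrability of $\det(Dy_j)$ (via Dunford--Pettis from weak $L^1$ convergence) and an Egorov argument, one obtains $u(y_j)\det(Dy_j)\rightharpoonup u(y)\det(Dy)$ in $L^1$. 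Applying the area formula once more, this time to the limit map $y$ (which is approximately differentiable a.e.\ as a Sobolev mapping), the left-hand side converges to $\int_{\mathbb{R}^d} u(z) N_y(z,\Omega)\,\mathrm{d}z$, so the inequality persists in the limit. Varying $u$ over a dense family (e.g.\ continuous approximations of indicators of measurable sets) then yields $N_y(\cdot,\Omega)\leq 1$ a.e.

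The main obstacle is the simultaneous passage to the limit in the composition and in the Jacobian: $u(y_j)$ converges only in measure, while $\det(Dy_j)$ converges only weakly in $L^1$, so neither factor on its own provides sufficient compactness for the product. This is precisely the type of subtlety already addressed by Theorem~\ref{theorem:konvergenz}; here it is resolved through Egorov combined with uniform integrability of the Jacobians. A secondary technical point is justifying weak continuity of the determinant at the borderline exponent $p=d-1$, where the $L^q$ bound on the cofactor matrix (rather than standard Sobolev embedding) is essential for closing the Piola argument.
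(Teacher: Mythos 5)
First, note that the paper does not prove this theorem at all: it is imported verbatim from Henao and Mora-Corral \cite{Henao2010}, being stated as a direct consequence of their Proposition 2 and Theorem 2. So any comparison is between your argument and theirs, not the paper's.

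Your proof of part (i) has a genuine gap, and it is located exactly where the real content of the theorem lies. The Piola/weak-continuity-of-minors argument you invoke requires (a) a uniform bound on $Dy_j$ in $L^p$ yielding weak convergence $Dy_j \rightharpoonup Dy$, and (b) a uniform bound on $\operatorname{cof}(Dy_j)$ in $L^q$ yielding weak $L^q$ convergence of the cofactors. Neither is among the hypotheses: the statement only assumes $y_j \to y$ (with no uniform Sobolev bound) and $\sup_j \|\operatorname{cof}(Dy_j)\|_1 < \infty$, i.e.\ an $L^1$ bound on the cofactors, from which no weakly convergent subsequence in $L^q$ can be extracted. Even granting the missing compactness, the conclusion $\det(Dy_j) \rightharpoonup \det(Dy)$ is false in general at this regularity: for $p \geq d-1$ (below $d$) the distributional determinant need not coincide with the pointwise one, and cavitating sequences show that the weak limit $\theta$ of the pointwise Jacobians can differ from $\det(Dy)$. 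Ruling this out is precisely the purpose of the two hypotheses your argument never uses in part (i) --- the a.e.\ injectivity $N_{y_j} \leq 1$ and the positivity $\theta > 0$ a.e. Henao and Mora-Corral's proof is degree-theoretic and geometric (analysis of the topological image and of created surface), not a div-curl argument; this is also why the conclusion carries an absolute value, $\theta = |\det(Dy)|$, which a Piola identity would never produce.

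Part (ii) is essentially sound as a deduction \emph{from} part (i): the inequality $\int_\Omega u(y_j)\det(Dy_j)\,\mathrm{d}x \leq \int_{\mathbb{R}^d} u\,\mathrm{d}z$ from the area formula, combined with the standard fact that a uniformly bounded a.e.\ convergent sequence times a weakly $L^1$-convergent sequence converges weakly in $L^1$ to the product of the limits, gives $\int_\Omega u(y)\,\theta\,\mathrm{d}x \leq \int_{\mathbb{R}^d} u\,\mathrm{d}z$. But to convert the left-hand side into $\int u\,N_y$ via the area formula you must first know $\theta = |\det(Dy)|$, so part (ii) cannot rescue, and indeed depends on, the step that is missing.
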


\subsection{Regularization Functionals}

In this section we shortly present the properties of the regularization functionals we use in our reconstruction framework. Both regularizations guarantee stronger convergence properties additionally to the compactness of sublevel sets. We do not focus on general possibilities for choosing these energies, but rather present the specific choices for images (total variation) and transformation (hyperelastic).

Total variation regularization was first introduced for image denoising in \cite{Rudin1992},  now known as the ROF-model. Recently TV regularization has been applied to different tasks in imaging. 
In fact there are many different equivalent ways to define total variation and $BV-$functions (compare \newline \cite[Chapter 10, Definition 10.1.1]{Attouch2006}), but we focus on the common definition in image processing (cf. \cite{Burger2013a}):

\begin{definition}[BV seminorm and functions of bounded variation]
Let 
$u: \Omega \subset \mathbb{R}^{d} \rightarrow \mathbb{R}.$
Then the BV-seminorm is given by
\begin{equation}
\tv{u} := \sup\limits_{g \in C^\infty_0(\Omega;\mathbb{R}^{d}),\|g\|_\infty \leq 1}\int\limits_{\Omega}u \nabla \cdot g\mathrm{d}x.
\label{eq:tv_regularization}
\end{equation}
 Consequently we define the space of functions with bounded variation $BV(\Omega)$ by
\begin{equation}
BV(\Omega):=\{u\in L^1(\Omega)\,|\, \tv{u} < \infty\}  \qquad \|u\|_{BV}:=\|u\|_1+\tv{u}
\end{equation}

\end{definition}

The BV-seminorm is lower semicontinuous in the $L^1_{loc}$ topology \cite[Remark 3.5]{Ambrosio2000}.
 \cite[Remark 3.12]{Ambrosio2000},
thus one can define a weak-star convergence. 

Note that due to compactness of the embedding operator the weak star convergence in $BV$
guarantees strong convergence in $L^1$. As we will see later, this will be useful to prove convergence properties of compositions of functions. Furthermore we can approximate any $BV$-function by a sequence of smooth functions:

\begin{theorem}[Approximation by smooth functions]
\label{theorem:Bv_smooth_approximation}
Let $u\in L^1(\Omega;\mathbb{R}^m)$. Then $u\in BV(\Omega,\mathbb{R}^m)$, if and only if there exists a sequence $u_n$ in $C^\infty(\Omega,\mathbb{R}^m)$ converging to u in $L^1(\Omega,\mathbb{R}^m)$ and there exists a constant $L$ satisfying 
\begin{equation}
 \lim\limits_{n\rightarrow \infty}\int\limits_{\Omega}|\nabla u_n|\mathrm{d}x < \infty
\end{equation}
\end{theorem}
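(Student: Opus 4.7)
The plan is to prove the two directions of this characterization separately. The nontrivial direction is the forward one, while the converse follows quickly from lower semicontinuity of the $BV$-seminorm, which is already cited in the excerpt.

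For the easy direction, assume $u_n \in C^\infty(\Omega,\mathbb{R}^m)$ with $u_n \to u$ in $L^1(\Omega)$ and $L := \lim_{n\to\infty} \int_\Omega |\nabla u_n|\dx < \infty$. For smooth $u_n$, testing \eqref{eq:tv_regularization} with any admissible $g \in C^\infty_0(\Omega;\mathbb{R}^d)$, $\|g\|_\infty \leq 1$, and integrating by parts gives $\int_\Omega u_n\nabla\cdot g\dx = -\int_\Omega \nabla u_n\cdot g\dx \leq \int_\Omega |\nabla u_n|\dx$ (the boundary term vanishes because $g$ is compactly supported in $\Omega$); the reverse inequality follows by approximating $-\nabla u_n/|\nabla u_n|$ by admissible $g$. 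Hence $|u_n|_{BV(\Omega)} = \int_\Omega |\nabla u_n|\dx$, and lower semicontinuity of $|\cdot|_{BV(\Omega)}$ with respect to $L^1$-convergence yields $|u|_{BV(\Omega)} \leq \liminf_n |u_n|_{BV(\Omega)} = L < \infty$, so $u \in BV(\Omega,\mathbb{R}^m)$.

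For the forward direction I would use the classical Meyers--Serrin construction. Fix a standard symmetric mollifier $\rho_\epsilon$. In the interior, where $u * \rho_\epsilon$ is defined on any $\Omega' \Subset \Omega$ for $\epsilon$ sufficiently small, Fubini together with the dual definition \eqref{eq:tv_regularization} yields the key interior estimate $\int_{\Omega'} |\nabla(u * \rho_\epsilon)|\dx \leq |u|_{BV(\Omega)}$, while $u * \rho_\epsilon \to u$ in $L^1_{loc}(\Omega)$. To globalize on a general bounded domain I would introduce an exhaustion $\Omega_k \Subset \Omega_{k+1} \Subset \Omega$ with $\bigcup_k \Omega_k = \Omega$ and a subordinate partition of unity $\{\psi_k\}$ with $\operatorname{supp}\psi_k \subset \Omega_{k+1}\setminus\overline{\Omega_{k-1}}$, chosen so that each point of $\Omega$ meets at most two supports. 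For each $n$, pick $\delta_{k,n}$ small enough that $(u\psi_k) * \rho_{\delta_{k,n}}$ is supported in $\Omega_{k+2}\setminus \overline{\Omega_{k-2}}$ and the convolution error is at most $2^{-k}/n$ in $L^1$ for both $u\psi_k$ and $u\nabla\psi_k$. Setting $u_n := \sum_k (u\psi_k)*\rho_{\delta_{k,n}}$, the sum is locally finite, so $u_n \in C^\infty(\Omega)$ and $u_n \to u$ in $L^1(\Omega)$. Using the distributional identity $\nabla[(u\psi_k)*\rho_{\delta_{k,n}}] = (\psi_k Du)*\rho_{\delta_{k,n}} + (u\nabla\psi_k\dx)*\rho_{\delta_{k,n}}$, summing in $k$, and exploiting $\sum_k \nabla\psi_k = 0$ on $\Omega$ together with the bounded-overlap property, one obtains $\limsup_n \int_\Omega|\nabla u_n|\dx \leq |u|_{BV(\Omega)}$; combined with the lower-semicontinuity bound from the previous paragraph this actually gives $\lim_n \int_\Omega |\nabla u_n|\dx = |u|_{BV(\Omega)} < \infty$, so the limit required by the theorem exists and is finite.

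The main obstacle is the boundary behaviour in the forward direction. A direct extension-by-zero combined with mollification creates an uncontrolled surface contribution to $\int_\Omega |\nabla u_n|\dx$ (essentially a trace term on $\partial\Omega$), which would ruin the sharp bound. The partition-of-unity construction above is precisely the Meyers--Serrin device that circumvents this by always working in the interior on each piece and exploiting the bounded overlap of the supports; the cancellation $\sum_k \nabla\psi_k = 0$ is what prevents the auxiliary terms $u\nabla\psi_k$ from accumulating. Everything else in the proof is routine, and the statement of the theorem actually requires only the weaker conclusion that $\lim_n \int_\Omega |\nabla u_n|\dx < \infty$, which is a direct consequence.
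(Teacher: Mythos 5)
Your proof is correct: the easy direction via lower semicontinuity and the forward direction via the Meyers--Serrin partition-of-unity mollification (with the cancellation $\sum_k \nabla\psi_k=0$ controlling the auxiliary terms) is precisely the standard argument. The paper itself offers no proof beyond citing \cite[Thm.~3.9]{Ambrosio2000}, and your construction is essentially the proof given in that reference, so there is nothing to reconcile.
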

\begin{proof}
\cite[Thm. 3.9]{Ambrosio2000}.
\end{proof}

\subsubsection{Hyperelastic Regularization}

For the  hyperelastic regularization energy, we presented earlier, Ruthotto \cite{Ruthotto2012a} used the following set of admissible transformations 
\begin{equation}
\label{eq:hyperelasmenge}
\mathscr{A} := \{y \in \mathscr{A}_0 : |\int\limits_{\Omega}{y(x)dx}|\leq \operatorname{vol}(\Omega)(M+\operatorname{diam}(\Omega))\},
\end{equation}
where $\Omega$ is bounded by M, and $\mathscr{A}_0$ is defined by:
\begin{align*}
\mathscr{A}_0 := \{ y\in W^{1,2} : &\operatorname{cof}(\nabla y) \in L^{4}(\Omega, \mathbb{R}^{3\times3}) \\
&\operatorname{det}(\nabla y) \in L^{2}(\Omega,\mathbb{R}), \operatorname{det}(\nabla y) > 0 \text{ a.e.} \}
\end{align*}

\begin{rem}
A transformation $y\in \mathscr{A}_0$ is call $y$ an \textbf{admissible transformation}. Note that all admissible transformations fulfill the conditions on the cofactor in Proposition \ref{proposition:degree_boundary} and Theorem \ref{thm:injectivity_closed_constraint}.
\end{rem}

As we see the admissible transformations have strict positive Jacobians a.e. and are thus locally invertible. Note that this deduction is not trivial: Since by the Sobolev embedding theorem \cite[Chapter 5.6.4, Thm 6]{Evans1998} an admissible function does not need to have a continuous version,  we cannot use the implicit function theorem to deduce the existence of a local inversion. Even worse, the standard theory on (local) invertibility of Sobolev mappings is focussed on mappings in $W^{1,d}$ (see for example \cite{Fonseca1995,Kovalev2010}), so this theory would only be applicable for $d=2$. However, we can use some results from the theory of Cartesian currents we mentioned briefly earlier and use the fact, that:
\begin{equation}
\mathscr{A} \subset \mathscr{A}_{d-1,d-1}(\Omega, \mathbb{R}^{m})
\end{equation}

with $\mathscr{A}_{d-1,d-1}(\Omega, \mathbb{R}^{m})$ as being defined in the Appendix. Then a result from M\"uller \cite{Mueller1994} yields the closedness of the graph \cite{Giaquinta1994a} of the transformation $y$ and thus $y$ is a weak local diffeomorphism as defined by Giaquinta et al. \cite{Giaquinta1998}. We will not elaborate on this further, because we used this argumentation only to demonstrate that we can expect to have local invertibility and that this property is not directly guaranteed by the positivity of the Jacobian determinant.

We conclude this short course on hyperelastic regularization by stating the convergence properties, shown in \cite[Chapter 3, Theorem 4]{Ruthotto2012a}:

\begin{theorem}[Convergence properties of admissible transformations ]
\label{thm:properties_hyperelastic_regularization}
Let $\Omega$ be a domain with a $C^1$ boundary and $y_k$, $y \in \mathscr{A}_0$ be admissible transformations. Then convergence of
 
$$y_k \rightharpoonup y \text{ in } W^{1,2}(\Omega,\mathbb{R}^3),  \quad \operatorname{cof} (\nabla y_k) \rightharpoonup H  \text{ in } L^4(\Omega, \mathbb{R}^{3 \times 3}), \quad \operatorname{det}(\nabla y_k) \rightharpoonup v   \text{ in } L^2(\Omega, \mathbb{R})
$$

implies 
$ H=\operatorname{cof}(\nabla y)$ and $v=\operatorname{det}(\nabla y).$

\end{theorem}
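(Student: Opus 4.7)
The plan is to exploit the null-Lagrangian (divergence) structure of $\operatorname{cof}(\nabla y)$ and $\det(\nabla y)$: each admits a representation in divergence form whose components are products of $y$ and $\nabla y$. Integration by parts then turns the identification of the weak limits into a passage to the limit in a strong-times-weak product, for which the compact embedding $W^{1,2}(\Omega,\mathbb{R}^3)\hookrightarrow L^p(\Omega,\mathbb{R}^3)$ for $p<6$ (enabled by the assumed $C^1$ boundary) provides the strongly convergent factor.

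\textbf{Step 1: $H=\operatorname{cof}(\nabla y)$.} For smooth $y$ a direct computation gives, for the appropriate indices $(i,j,k,l,m,n)$,
$$(\operatorname{cof}\nabla y)_{ij}=\partial_k(y_m\,\partial_l y_n)-\partial_l(y_m\,\partial_k y_n),$$
since the mixed second partials cancel. Testing against $\phi\in C^\infty_c(\Omega)$ and integrating by parts yields
$$\int_\Omega(\operatorname{cof}\nabla y_k)_{ij}\,\phi\,dx = -\int_\Omega \bigl(y_{k,m}\,\partial_l y_{k,n}\,\partial_k\phi - y_{k,m}\,\partial_k y_{k,n}\,\partial_l\phi\bigr)\,dx,$$
an identity which extends from $C^\infty$ to $\mathscr{A}_0$ by mollification. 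The weak $W^{1,2}$-convergence, together with compact embedding, gives $y_k\to y$ strongly in $L^3$ while $\nabla y_k\rightharpoonup\nabla y$ in $L^2$, so each product $y_{k,m}\,\partial_{\cdot}y_{k,n}$ converges weakly in $L^{6/5}$ to $y_m\,\partial_{\cdot}y_n$. Hence the right-hand side tends to the analogous expression for $y$, which equals $\int_\Omega(\operatorname{cof}\nabla y)_{ij}\phi\,dx$. The assumed weak $L^4$-convergence of the cofactor gives the same limit as $\int_\Omega H_{ij}\phi\,dx$, and density of test functions yields $H=\operatorname{cof}(\nabla y)$ a.e.

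\textbf{Step 2: $v=\det(\nabla y)$.} Using cofactor expansion together with the Piola identity $\sum_j\partial_j(\operatorname{cof}\nabla y)_{ij}=0$, one obtains the divergence representation
$$\det(\nabla y) = \frac{1}{d}\sum_{i,j}\partial_j\bigl(y_i\,(\operatorname{cof}\nabla y)_{ij}\bigr),$$
and after integration by parts
$$\int_\Omega\det(\nabla y_k)\,\phi\,dx = -\frac{1}{d}\sum_{i,j}\int_\Omega y_{k,i}\,(\operatorname{cof}\nabla y_k)_{ij}\,\partial_j\phi\,dx.$$
Compact embedding gives $y_k\to y$ strongly in $L^p$ for every $p<6$, while $\operatorname{cof}(\nabla y_k)\rightharpoonup H$ weakly in $L^4$; by Hölder the products converge weakly in $L^{12/5}$ to $y_i H_{ij}$, which by Step~1 coincides with $y_i(\operatorname{cof}\nabla y)_{ij}$. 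Passing to the limit therefore produces $\int_\Omega\det(\nabla y)\,\phi\,dx$, while the left-hand side tends to $\int_\Omega v\,\phi\,dx$ by the weak $L^2$-assumption, so $v=\det(\nabla y)$.

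\textbf{Main obstacle.} The only genuine difficulty is to justify the divergence identities and the subsequent integration by parts on the non-smooth class $\mathscr{A}_0$ rather than for classical $C^2$ maps. The integrability thresholds built into $\mathscr{A}_0$, namely $y\in W^{1,2}$, $\operatorname{cof}(\nabla y)\in L^4$ and $\det(\nabla y)\in L^2$, are precisely what is needed to continuously extend the smooth identities by mollification: they ensure that both sides above are continuous in the joint weak $W^{1,2}\times L^4$ topology, so the full argument reduces to a special case of Ball's classical weak-continuity theorem for minors of the Jacobian, applied in the exponent regime encoded in $\mathscr{A}_0$.
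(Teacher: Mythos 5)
Your argument is correct and is essentially the same proof as the one behind the paper's citation: the paper only refers to Ruthotto's thesis for this statement, and the proof there is precisely the classical Ball-type weak continuity of minors via their divergence (null-Lagrangian) structure, compact Sobolev embedding for the strongly convergent factor, and passage to the limit in strong-times-weak products. The one step you leave implicit --- that for maps in $\mathscr{A}_0$ the pointwise determinant coincides with the distributional determinant $\tfrac{1}{d}\operatorname{div}\bigl(y^{T}\operatorname{cof}(\nabla y)\bigr)$, which does not follow from naive mollification since $\det(\nabla y^{\varepsilon})$ only converges in $L^{2/3}$ --- is exactly the identification result of M\"uller et al.\ cited in the paper, and it holds under the integrability assumptions built into $\mathscr{A}_0$, so your reduction is legitimate.
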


\subsection{Existence of a Minimizer}

In this section we establish the following existence result for a compact subset $\mathscr{K}$ of the injective admissible transformations:

\begin{theorem}[Existence of a minimizer in motion-corrected reconstruction]
Let our assumptions (A1)-(A3) hold, furthermore we assume that we have 
\label{theorem:Existence_mcr_functional}
\begin{equation}
\label{eq:mcr_level_set_nonempty}
J(1,(Id)^{N})=\sum\limits_{i=0}^{N}(D(K\rho^0,f^i)+\alpha_i \tv{\rho^0})+\sum\limits_{i=1}^{N}\beta_i S^{hyper}(Id)<\infty,
\end{equation}

Then the functional \eqref{eq:functional}

with a $L^1-$continuous distance term fulfilling coercivity property \eqref{eq:distance_coercivity_condition} has at least one minimizer\newline
$(\hat{\rho},\hat{y}) \in BV(\Omega)\times (\mathscr{K})^N$, where $\mathscr{K}$ is a closed subset of $\mathscr{A}\cap \mathscr{I}$. Particularly this holds for the Kullback-Leibler divergence as distance term. 
\end{theorem}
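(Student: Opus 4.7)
The plan is to run the direct method of the calculus of variations. By \eqref{eq:mcr_level_set_nonempty} the sublevel set at height $J(1,(\operatorname{Id})^N)$ is nonempty, so I pick a minimizing sequence $(\rho_n,y_n)$ with each pair satisfying the transformation constraint \eqref{eq:transformation_model}. Coercivity \eqref{eq:distance_coercivity_condition} of the data term together with the uniform seminorm control coming from $\sum_i \alpha^i \tv{\rho_n^i}$ yields a uniform $BV$-bound on each $\rho_n^i$; passing to a subsequence, the compact embedding $BV(\Omega)\hookrightarrow L^1(\Omega)$ delivers $\rho_n^i\to \hat\rho^i$ strongly in $L^1$ and weak-$\ast$ in $BV$, with nonnegative limits.

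For the deformations, uniform boundedness of $S^{hyper}(y_n^i)$ controls $\nabla y_n^i$ in $L^2$, $\operatorname{cof}(\nabla y_n^i)$ in $L^4$, and $\det(\nabla y_n^i)$ in $L^2$. After a further subsequence, $y_n^i \rightharpoonup \hat y^i$ in $W^{1,2}$, and Theorem \ref{thm:properties_hyperelastic_regularization} identifies the weak limits of cofactor and determinant as $\operatorname{cof}(\nabla \hat y^i)$ and $\det(\nabla \hat y^i)$, respectively. Since $\mathscr{K}$ is a closed subset of $\mathscr{A}\cap\mathscr{I}$, I obtain $\hat y^i\in\mathscr{K}$; preservation of injectivity under this passage to the limit is exactly the content of Theorem \ref{thm:injectivity_closed_constraint}.

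The main obstacle is passing to the limit in the mass-conservation constraint $\rho_n^i=\rho_n^0(y_n^i)\,\det(\nabla y_n^i)$. Compositions of $BV$ functions with Sobolev maps are delicate and not continuous with respect to the natural weak topologies, which is the point where Theorem \ref{theorem:konvergenz} enters: it yields weak $L^1$-convergence of $\rho_n^0(y_n^i)\,\det(\nabla y_n^i)$ to $\hat\rho^0(\hat y^i)\,\det(\nabla \hat y^i)$. Combined with the strong $L^1$-convergence $\rho_n^i\to\hat\rho^i$ from the first step, this forces $\hat\rho^i=\hat\rho^0(\hat y^i)\,\det(\nabla \hat y^i)$ almost everywhere, so the limiting pair is feasible.

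It remains to collect lower semicontinuities and conclude. The $BV$-seminorm is lower semicontinuous along strong $L^1$ convergence by its dual definition \eqref{eq:tv_regularization}. Compactness of $K\colon L^1(\Omega)\to Y$ turns the strong $L^1$-convergence $\rho_n^i\to\hat\rho^i$ into strong convergence $K\rho_n^i\to K\hat\rho^i$ in $Y$, so $L^1$-continuity and coercivity of $D$ yield $D(K\hat\rho^i,f^i)\le\liminf_n D(K\rho_n^i,f^i)$, which specializes to the Kullback-Leibler form \eqref{eq:datafidelity}. The hyperelastic term is lower semicontinuous along the established modes of convergence because $\operatorname{len}$, $\operatorname{surf}$, $\operatorname{vol}$ are convex in their arguments. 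Summing, $J(\hat\rho,\hat y)\le\liminf_n J(\rho_n,y_n)$, hence $(\hat\rho,\hat y)$ is a minimizer.
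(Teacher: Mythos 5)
Your proposal is correct and follows essentially the same route as the paper: the direct method, with coercivity of the Kullback--Leibler term (Lemma \ref{lem:distance_coercive}) and of the hyperelastic energy supplying compactness, Theorem \ref{theorem:konvergenz} handling the composition $\rho^0(y)\det(\nabla y)$, and the lower semicontinuity lemmas for distance, TV and $S^{hyper}$ closing the argument. You are somewhat more explicit than the paper about identifying the weak limits of the minors and about passing to the limit in the mass-conservation constraint (and you could note that the hypothesis \eqref{eq:boundindicatrix} of Theorem \ref{theorem:konvergenz} holds with $C=1$ precisely because the $y_n^i$ are injective), but the substance is the same.
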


Injectivity is crucial to enforce the mass-preservation condition. Despite the fact that our proof can be extended to transformations with $N_y(\cdot,\Omega)$ bounded in $L^\infty$ we shortly give two closed subsets of the injective transformations:
\begin{rem}
\label{remark:injectivity_remark}
\begin{itemize}
\item The set $\mathscr{A}\cap \mathscr{I}$ is closed \cite{Henao2010}.
\item The set $\mathscr{B}_v:=\{y \in \mathscr{A} \, : \, y|_{\partial \Omega}=v\}$ is a closed subset of $\mathscr{A}\cap \mathscr{I}$ for an injective boundary value $v$, where the equality is understood in the sense of $H^1$ traces.
\end{itemize}
\end{rem}

Crucial for the proof is controlling sequences of functions after transformations $\rho_k(y_k)\operatorname{det}(\nabla y_k)$ with $\rho_k, y_k$ beeing weak-* convergent sequences resulting from coercivity properties. Before actually presenting our central theorem on this convergence, we define weak-* convergence for the transformation as follows (compare \cite{Ruthotto2012}).

\begin{definition}[Weak-* Convergence in $\mathscr{A}$]
Let $y_k,y \in \mathscr{A}$. We say $y_k \rightharpoonup^* y$ in $\mathscr{A}$, iff
\begin{alignat*}{2}
y_k &\rightharpoonup^* y && \text{ in } W^{1,2}(\Omega,\mathbb{R}^3)\\
\operatorname{cof}(\nabla y_k) &\rightharpoonup^* \operatorname{cof}(\nabla y) \qquad && \text{ in } L^4\left(\Omega, \mathbb{R}^{3\times 3}\right)\\
\operatorname{det}(\nabla y_k) &\rightharpoonup^*  \operatorname{det}(\nabla y) && \text{ in } L^2(\Omega,\mathbb{R})
\end{alignat*}
\end{definition}

Now we can deduce convergence properties of composed weak-* convergent sequences.

\begin{theorem}
\label{theorem:konvergenz}
Let $\rho^{0}_{k} \rightharpoonup^{*} \rho^0 $ in $BV(\Omega_0)$, $y_k \rightharpoonup y$ in $H^{1}(\Omega)$ and $\operatorname{det}(\nabla y_k) \rightharpoonup \operatorname{det}(\nabla y)$ in $L^2(\Omega)$. Assume additionally, that
\begin{equation}
\sup\limits_{k} \|N_{y_k}(\cdot,\Omega)\|_{\infty} \leq C \in \mathbb{R}.
\label{eq:boundindicatrix}
\end{equation}
Then we obtain $\rho^0_k(y_k)\operatorname{det}(\nabla y_k) \rightharpoonup \rho^0(y)\operatorname{det}(\nabla y) \text{ in } L^{1}(\Omega)$.

\end{theorem}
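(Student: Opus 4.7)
The plan is to combine the Dunford--Pettis criterion with a repeated application of the area formula (Theorem \ref{theorem:area_formula}) that routes the composition $\rho_k^0\circ y_k$ through a smooth auxiliary test function. Write $f_k:=\rho_k^0(y_k)\det(\nabla y_k)$ and $f:=\rho^0(y)\det(\nabla y)$. I will verify (i) that $\{f_k\}$ is bounded and equi-integrable in $L^1(\Omega)$, and (ii) that $\int_\Omega f_k\phi\dx\to\int_\Omega f\phi\dx$ for every $\phi\in C_c^\infty(\Omega)$. By (i) and Dunford--Pettis, $\{f_k\}$ is weakly precompact in $L^1(\Omega)$; by (ii) and the fact that $C_c^\infty(\Omega)$ separates $L^1$, every weak cluster point equals $f$, forcing $f_k\rightharpoonup f$ in $L^1(\Omega)$. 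Throughout I use that weak-$*$ convergence in $BV(\Omega_0)$ implies strong $L^1(\Omega_0)$ convergence, that weak $H^1(\Omega)$ convergence implies strong $L^2$ and, up to a subsequence, a.e.\ convergence of $y_k$ (Rellich), together with \eqref{eq:boundindicatrix}.

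For (i), Theorem \ref{theorem:area_formula}(ii) applied with $u=|\rho_k^0|$ on any measurable $E\subset\Omega$ yields
\begin{equation*}
\int_E |f_k|\dx \;=\; \int_{\R^d}|\rho_k^0(z)|\,N_{y_k}(z,E)\,\mathrm{d}z \;\le\; C\int_{y_k(E)}|\rho_k^0|\,\mathrm{d}z,
\end{equation*}
while the same formula with $u\equiv 1$ gives $|y_k(E)|\le\int_E\det(\nabla y_k)\dx\le C'|E|^{1/2}$, since $\det(\nabla y_k)$ is bounded in $L^2(\Omega)$. Combined with the equi-integrability of $\{\rho_k^0\}$ inherited from its strong $L^1$ convergence, this yields equi-integrability of $\{f_k\}$; the choice $E=\Omega$ also gives $L^1$-boundedness.

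For (ii), fix $\phi\in C_c^\infty(\Omega)$. The discrepancy $\int_\Omega[\rho_k^0(y_k)-\rho^0(y_k)]\det(\nabla y_k)\phi\dx$ is bounded in absolute value by $C\|\phi\|_\infty\|\rho_k^0-\rho^0\|_{L^1(\Omega_0)}\to 0$ via the same area-formula estimate, so it suffices to prove the convergence with $\rho^0$ in place of $\rho_k^0$ on the left. Applying the last identity of Theorem \ref{theorem:area_formula} with $u(x)=\rho^0(y_k(x))\phi(x)$ (and analogously with $y$) rewrites
\begin{equation*}
\int_\Omega \rho^0(y_k)\det(\nabla y_k)\phi\dx \;=\; \int_{\R^d} \rho^0(z)\,\tilde\phi_k(z)\,\mathrm{d}z,\qquad \tilde\phi_k(z):=\sum_{w\in y_k^{-1}(z)\cap\Omega}\phi(w),
\end{equation*}
with the corresponding $\tilde\phi$ on the right. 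Since $\rho^0\in L^1(\Omega_0)$, the claim reduces to $\tilde\phi_k\rightharpoonup^*\tilde\phi$ in $L^\infty(\R^d)$.

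This weak-$*$ step is the main technical point and ties all three hypotheses together. From \eqref{eq:boundindicatrix} one has $\|\tilde\phi_k\|_\infty\le C\|\phi\|_\infty$, so $\{\tilde\phi_k\}$ is weak-$*$ precompact in $L^\infty(\R^d)$ by Banach--Alaoglu. To identify the limit I test against $\psi\in C_c^\infty(\R^d)$ and apply the area formula once more with $u(x)=\phi(x)\psi(y_k(x))$:
\begin{equation*}
\int_{\R^d}\tilde\phi_k\,\psi\,\mathrm{d}z \;=\; \int_\Omega \phi(x)\,\psi(y_k(x))\det(\nabla y_k(x))\dx.
\end{equation*}
Along a subsequence $y_k\to y$ a.e.\ by Rellich, so $\psi(y_k)\to\psi(y)$ a.e.\ and is uniformly bounded by $\|\psi\|_\infty$; dominated convergence upgrades this to strong convergence of $\phi\,\psi(y_k)$ in $L^2(\Omega)$. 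Combined with $\det(\nabla y_k)\rightharpoonup\det(\nabla y)$ in $L^2$, the strong-times-weak pairing gives $\int\tilde\phi_k\psi\,\mathrm{d}z\to\int\tilde\phi\psi\,\mathrm{d}z$. Since this limit does not depend on the subsequence chosen, every weak-$*$ cluster point of $\{\tilde\phi_k\}$ equals $\tilde\phi$, hence $\tilde\phi_k\rightharpoonup^*\tilde\phi$, which completes (ii) and thus the proof via Dunford--Pettis. The main obstacle is precisely this weak-$*$ step: a direct approach approximating $\rho^0\in BV$ by smooth functions would force a bound on $\|N_y\|_\infty$ to control the resulting area-formula errors in the limit, which the hypotheses do not supply; keeping the composition on the smooth side through $\psi$ circumvents this entirely.
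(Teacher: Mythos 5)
Your argument is correct, and it takes a genuinely different route for the hard step than the paper does. The paper's proof also begins by replacing $\rho^0_k$ with $\rho^0$ via the area formula, \eqref{eq:boundindicatrix} and the strong $L^1$ convergence inherited from weak-$*$ convergence in $BV$, but it then treats $\int_\Omega(\rho^0(y_k)\det(\nabla y_k)-\rho^0(y)\det(\nabla y))\varphi\,\mathrm{d}x$ for an arbitrary $\varphi\in L^\infty$ by approximating $\rho^0$ with smooth functions $\xi^N$ (Theorem \ref{theorem:Bv_smooth_approximation}) and telescoping through four terms: the swap $\rho^0\leftrightarrow\xi^N$ under composition with $y_k$, the Lipschitz continuity of $\xi^N$ combined with Rellich--Kondrachov to get $\xi^N(y_k)\to\xi^N(y)$, the weak $L^2$ convergence of the determinants against the fixed bounded function $\xi^N(y)\varphi$, and the swap $\xi^N\leftrightarrow\rho^0$ under composition with the \emph{limit} map $y$. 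You instead dualize: the area formula moves the pairing onto $\langle\rho^0,\tilde\phi_k\rangle$ with the pushed-forward test function $\tilde\phi_k$, whose weak-$*$ limit in $L^\infty$ you identify by transferring back with a smooth $\psi$, so that the composition that must converge is $\psi(y_k)$ with $\psi$ already smooth; Dunford--Pettis then upgrades testing against $C_c^\infty$ to weak $L^1$ convergence. What your route buys is exactly the point you flag at the end: the paper's last telescoping term \eqref{eq:summand4} applies the area formula to the limit map and needs $\|N_y(\cdot,\Omega)\|_\infty$ to be controlled, which is not among the stated hypotheses (only $\sup_k\|N_{y_k}\|_\infty\le C$ is assumed; in the paper's application this is recovered from the closedness of the injectivity constraint, Theorem \ref{thm:injectivity_closed_constraint}). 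Your cluster-point identification gives $\|\tilde\phi\|_\infty\le C\|\phi\|_\infty$ for free, which yields $N_y(\cdot,\Omega)\le C$ a.e.\ and hence $\rho^0(y)\det(\nabla y)\in L^1(\Omega)$ as a byproduct rather than as an extra assumption. What the paper's route buys is directness: it tests against arbitrary $\varphi\in L^\infty$, so weak $L^1$ convergence follows without the equi-integrability/Dunford--Pettis step. Two points to make explicit in a final write-up: the identity $\int_{\R^d}\tilde\phi\,\psi\,\mathrm{d}z=\int_\Omega\phi\,\psi(y)\det(\nabla y)\dx$ used to pin down the cluster point is itself an application of the area formula to the limit map (legitimate since $y\in H^1$ is a.e.\ approximately differentiable and the integrand is bounded times $\det(\nabla y)\in L^1$), and both your proof and the paper's use the area formula with $\det(\nabla y_k)$ rather than $|\det(\nabla y_k)|$, which tacitly assumes the orientation preservation $\det(\nabla y_k)\ge 0$ a.e.\ built into the admissible set $\mathscr{A}_0$.
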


\begin{proof}

For the ease of presentation we use the following abbreviation:

\begin{equation*}
\rho_k:=\rho^0_k \quad \rho:=\rho^0 \quad d_k:=\operatorname{det}(\nabla y_k)
\end{equation*}

For any fixed $\varphi \in (L^{1})^{*}=L^{\infty}$ we show:
\begin{align*}
0 = \lim_{k \rightarrow \infty}&\int\limits_{\Omega}{(\rho_k(y_k)d_k - \rho(y)d)\varphi \mathrm{d}x} \\
=\lim_{k \rightarrow \infty}&\int\limits_{\Omega}(\rho_k(y_k)d_k - \rho(y_k)d_k + \rho(y_k)d_k - \rho(y)d) \varphi \mathrm{d}x \\
= \lim_{k \rightarrow \infty} &\int\limits_{\Omega}{(\rho_k(y_k)d_k - \rho(y_k)d_k)\varphi \mathrm{d}x} + \lim_{k \rightarrow \infty}\int\limits_{\Omega}{(\rho(y_k)d_k - \rho(y)d)\varphi \mathrm{d}x} 
\end{align*}
We examine both summands separately and show that each of them converges to zero. 
We recall that weak star convergence in $BV$ implies strong convergence in $L^{1}$ \cite{Burger2013a}. Now the first term is treated in a straightforward way by the area formula and  \eqref{eq:boundindicatrix}
\begin{align*}
\lim_{k \rightarrow \infty}\int\limits_{\Omega}{\left|\rho_k(y_k)d_k - \rho(y_k)d_k\right|\mathrm{d}x} 
&=\lim_{k \rightarrow \infty}\int\limits_{\mathbb{R}^{n}}{\left|\rho_{k}-\rho\right|N_{y_{k}}(x,\Omega)\mathrm{d}x}\\
&\leq \lim_{k \rightarrow \infty} C\int\limits_{\mathbb{R}^{n}}{\left|\rho_{k}-\rho\right|\mathrm{d}x}
\end{align*}
Since  $\operatorname{supp}(\rho_k -\rho) \subseteq \Omega_0$ and $\rho_{k} \rightarrow \rho$ in $L^{1}(\Omega_0)$ implies weak convergence, we obtain
\begin{equation}
\label{eq:bigsummand1}
\lim_{k \rightarrow \infty}\int\limits_{\Omega}{(\rho_k(y_k)d_k - \rho(y_k)d_k)\varphi \mathrm{d}x}=0 \hspace{0.5cm} \forall \varphi.
\end{equation}
The second term needs further care: According to \cite[Thm. 3.9]{Ambrosio2000}. we find a sequence of functions $(\xi^{n})_n \subset C^{\infty}(\Omega_0)$ with $\lim\limits_{n\rightarrow \infty}\|\xi^{n} - \rho\|_1=0$.

Let $\epsilon > 0$. Thus we can pick a fix N, such that:
\begin{equation}
\int\limits_{\Omega_0}{|\xi^{N}-\rho| \mathrm{d}x}\leq \frac{\epsilon}{4\|\varphi\|_{\infty}C}. 
\label{eq:glatteapprox}
\end{equation}
We now expand the first term with said $\xi^{N}$ and obtain:
\begin{align*}
&\int\limits_{\Omega}(\rho(y_k)d_k- \xi^{N}(y_k)d_k + \xi^{N}(y_k)d_k - \xi^{N}(y)d_k \\
&+\hphantom{\int\limits_{\Omega}} \xi^{N}(y)d_k - \xi^{N}(y)d + \xi^{N}(y)d - \rho(y)d)\varphi \mathrm{d}x \\
& \quad=\int\limits_{\Omega}{(\rho(y_k)d_k- \xi^{N}(y_k)d_k)\varphi \mathrm{d}x} + \int\limits_{\Omega}{(\xi^{N}(y_k)d_k - \xi^{N}(y)d_k)\varphi \mathrm{d}x} \\
&+ \quad\int\limits_{\Omega}{(\xi^{N}(y)d_k - \xi^{N}(y)d)\varphi \mathrm{d}x} + \int\limits_{\Omega}{(\xi^{N}(y)d - \rho(y)d)\varphi \mathrm{d}x },\\
\end{align*}
We examine each term separately: We start with applying H\"older's inequality to the first one and obtain
\begin{align*}
 \int\limits_{\Omega}{(\rho(y_k)d_k- \xi^{N}(y_k)d_k)\varphi \mathrm{d}x} 
&\leq \int\limits_{\Omega}{|\rho(y_k)d_k- \xi^{N}(y_k)d_k|\mathrm{d}x} ~ \|\varphi\|_{\infty} \\
&=\int\limits_{\mathbb{R}^{n}}{|\rho - \xi^{N}|N_{y_k}(z,\Omega)\mathrm{d}z} \|\varphi\|_{\infty} \\
&\leq C \|\rho - \xi^{N}\|_1  \|\varphi\|_{\infty},
\end{align*}
which yields together with \eqref{eq:glatteapprox} 
\begin{equation}
\int\limits_{\Omega}{(\rho(y_k)d_k- \xi^{N}(y_k)d_k)\varphi \mathrm{d}x} \leq \frac{\epsilon}{4} .
\label{eq:summand1}
\end{equation}
In the second term we start in a straightforward way with
\begin{equation}
\int\limits_{\Omega}{|\xi^{N}(y_k)d_k|\mathrm{d}x} \leq \|\xi^{N}(y_k)\|_{2}\|d_k\|_2.
\label{eq:erstesintegral}
\end{equation}
By using that $\xi^{N}(y_k)$ is bounded, $\Omega$  compact and $y_k$ is weakly convergent, it directly follows, that $\|\xi^{N}(y_k)\|_{2}<\infty$ and $\|d_k\|_2$ is bounded by some $E\in\mathbb{R}$ (Banach-Steinhaus, see e.g. \cite[Chapter 5, Theorem 3]{Alt2002}). Thus \eqref{eq:erstesintegral} is finite. Aswell we can directly deduce
\begin{equation}
\int\limits_{\Omega}{|\xi^{N}(y)d_k|\mathrm{d}x} \leq \|\xi^{N}(y)\|_{2}\|d_k\|_2 \leq \infty,
\label{eq:zweitesintegral}
\end{equation}
and hence,
\begin{align*}
\int\limits_{\Omega}{(\xi^{N}(y_k)d_k - \xi^{N}(y)d_k)\varphi \mathrm{d}x} 
&\leq  \|(\xi^{N}(y_k)- \xi^{N}(y))\varphi\|_{2}\|d_k\|_{2} \\
&\leq   \|(\xi^{N}(y_k)- \xi^{N}(y))\varphi\|_{2} ~E 
\end{align*}
Note that $\xi^{N}$ is in $C^{\infty}$ and therefore $\left(\xi^{N}\right)^{2}$ is Lipschitz-continuous with some constant L, since $\Omega$ is compact. We obtain
\begin{equation}
\int\limits_{\Omega}{\left((\xi^{N}(y_k) - \xi^{N}(y))\varphi\right)^{2} \mathrm{d}x} \leq \left(\|\varphi\|_{\infty}\right)^{2} L \|y_k-y\|_{2}.
\end{equation}
By the Rellich-Kondrachov compactness theorem \cite[Chapter 5.7, Theorem 1]{Evans1998} $y_k$ converges strongly to $y$ in $L^{2}$ and so we find $K_2 \in \mathbb{N}$, such that
\begin{equation}
\|y_k-y\| < \frac{\epsilon^{2}}{16 \left(\|\varphi\|_{\infty}\right)^{2} L E^{2}}. \qquad \forall k\geq K_2
\end{equation}
This implies for each $k \geq K_2$
\begin{equation}
\int\limits_{\Omega}{(\xi^{N}(y_k)d - \xi^{N}(y)d)\varphi \mathrm{d}x} < \frac{\epsilon}{4}.
\label{eq:summand2}
\end{equation} 
Now let us considered the third term. 
Since $\xi^{N}(y_k)$ is bounded it follows from the weak convergence of the determinants, that there exists $K_3 \in \mathbb{N}$, such that for every $k \geq K_3$ 
\begin{equation}
\label{eq:summand3}
\int\limits_{\Omega}{\left(\xi^{N}(y)d_k - \xi^{N}(y)d\right)\varphi \mathrm{d}x} \leq \frac{\epsilon}{4}. 
\end{equation}
Note that the compactness of $\Omega$ ensures that $\varphi \xi^{N}(y) \in L^{2}$. 
For the last term we can proceed as for the first one, with $K_1$ as above, and deduce
\begin{equation}
\label{eq:summand4}
\int\limits_{\Omega}{\left(\xi^{N}(y)d - \rho(y)d\right)\varphi \mathrm{d}x } \leq \frac{\epsilon}{4} 
\end{equation}
 for any $k \geq K_1$. 

By combining \eqref{eq:summand1}, \eqref{eq:summand2}, \eqref{eq:summand3} and  \eqref{eq:summand4}, we obtain for every 
$k \geq K := \max \{K_1,K_2,K_3\}$
\begin{equation*}
\int\limits_{\Omega}{(\rho(y_k)d_k - \rho(y)d)\varphi \mathrm{d}x}\leq \epsilon \hspace{0.5cm} \forall \varphi
\end{equation*}
and thus
\begin{equation}
\label{eq:bigsummand2}
\lim\limits_{k \rightarrow \infty}\int\limits_{\Omega}{(\rho_k(y_k)d_k - \rho_k(y)d)\varphi \mathrm{d}x}= 0.
\end{equation}
The assertion follows by combining \eqref{eq:bigsummand1} and \eqref{eq:bigsummand2}.
\end{proof}

As we see, the boundedness of the Banach-Indicatrix is substantial to control sequences of images after transformations. In order to guarantee the convergence properties given by Theorem \ref{theorem:konvergenz}, we restrict our analysis to injective transformations. Additionally this has the effect that as a consequence of the area formula the mass-preservation condition is not violated.

Now we can give continuity properties for a wide range of distance measures including the Kullback-Leibler divergence:
\begin{lemma}
\label{lem:distance_semicontinuous}
Let the assumptions from Theorem \ref{theorem:konvergenz} be fulfilled. Then the distance part of our functional J, defined by
\begin{equation}
D(\rho_0,y)=\sum\limits_{i=0}^N\int\limits_{\Sigma}{g(K\rho^0(y^i)\operatorname{det}(\nabla y^i),f^i)\mathrm{d}\sigma}
\end{equation}
with a $L^1$-continuous integrand function $g$ is lower semicontinous with respect to weak-star convergence in $BV(\Omega_0)$, weak convergence in $W^{1,2}(\Omega_i)$ for the transformations and weak convergence in $L^2(\Omega_i)$ for the determinants.
\end{lemma}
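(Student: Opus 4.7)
The plan is to combine Theorem~\ref{theorem:konvergenz} with the compactness of $K$ and the $L^1$-continuity of the integrand $g$. Consider a sequence $(\rho_0^k, y_k)$ satisfying the topologies prescribed in the statement: $\rho_0^k \rightharpoonup^{*} \rho_0$ in $BV(\Omega_0)$, $y_k^i \rightharpoonup y^i$ in $W^{1,2}(\Omega_i)$, and $\operatorname{det}(\nabla y_k^i) \rightharpoonup \operatorname{det}(\nabla y^i)$ in $L^2(\Omega_i)$ for each $i = 0, \ldots, N$. Since the assumptions of Theorem~\ref{theorem:konvergenz} are part of the hypotheses, the theorem applies componentwise in $i$ and yields
\begin{equation*}
\rho_0^k(y_k^i)\operatorname{det}(\nabla y_k^i) \rightharpoonup \rho^0(y^i)\operatorname{det}(\nabla y^i) \quad \text{in } L^1(\Omega_i).
\end{equation*}

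Next I would invoke the compactness hypothesis on $K\colon L^1(\Omega) \to Y$: compact linear operators send weakly convergent sequences to strongly convergent ones, so
\begin{equation*}
K\bigl(\rho_0^k(y_k^i)\operatorname{det}(\nabla y_k^i)\bigr) \longrightarrow K\bigl(\rho^0(y^i)\operatorname{det}(\nabla y^i)\bigr) \quad \text{strongly in } Y,
\end{equation*}
which in the Poisson setting of the paper (where $Y$ embeds continuously into $L^1(\Sigma)$) specializes to strong $L^1(\Sigma)$-convergence. The $L^1$-continuity of $g(\cdot, f^i)$ then transports this into convergence of each summand,
\begin{equation*}
\int_\Sigma g\bigl(K(\rho_0^k(y_k^i)\operatorname{det}(\nabla y_k^i)), f^i\bigr)\,\mathrm{d}\sigma \longrightarrow \int_\Sigma g\bigl(K(\rho^0(y^i)\operatorname{det}(\nabla y^i)), f^i\bigr)\,\mathrm{d}\sigma,
\end{equation*}
and summing over $i$ actually gives full continuity of $D$ along such sequences, which is strictly stronger than the claimed lower semicontinuity.

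The substantive difficulty is already absorbed into Theorem~\ref{theorem:konvergenz}: the delicate point is propagating weak-$\ast$ convergence of $\rho_0^k$ in $BV$ and weak convergence of $y_k^i$ and their Jacobians through the nonlinear composition, and this is precisely what the area formula together with the bounded-indicatrix condition delivers. Once that weak $L^1$ convergence is in hand, the compactness of $K$ upgrades it to norm convergence in the data space $Y$, where the pointwise-in-$i$ continuity hypothesis on $g$ closes the argument without further subtlety. If instead $g$ were only convex and $L^1$-lower semicontinuous in its first argument (as for more singular data-fidelity terms), the last step would have to be replaced by a Fatou-type bound, still yielding the lower semicontinuity stated in the lemma.
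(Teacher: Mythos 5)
Your argument follows the paper's proof essentially step for step: apply Theorem~\ref{theorem:konvergenz} componentwise, use complete continuity of the compact operator $K$ to upgrade to strong $L^1(\Sigma)$ convergence, and then pass the $L^1$-continuity of $g$ through the integral (the paper merely caps this off with a Fatou argument, which your closing remark already anticipates for the weaker hypothesis). No gaps; this is the same route.
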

\begin{proof}
Let $y_k \rightharpoonup y$ in $W^{1,2}(\Omega)$, $\operatorname{det}(\nabla y_k) \rightharpoonup \operatorname{det}(\nabla y)$ in $L^2$, $\rho^{0}_{k} \rightharpoonup \rho^0$ in $BV(\Omega)$.
Since we denote $y=(y^1,...y^N)$ as the collection of all transformations we understand the weak convergence componentwise. 
From Theorem \ref{theorem:konvergenz} we obtain for any fixed $1\leq i \leq N$ 
\begin{equation*}\rho^{0}_{k}(y_k^i)det(\nabla y_k^i) \rightharpoonup \rho^0(y^i)det(\nabla y^i). 
\end{equation*}
Since $K$ is a compact operator, K is completely continuous, which gives us:
\begin{equation*}
K (\rho^{0}_{k}(y_k^i)det(\nabla y_k^i)) \rightarrow K (\rho^0(y^i)\operatorname{det}(\nabla y^i)) \qquad \text{ in } L^{1}(\Sigma).
\end{equation*} 
Therefore we can follow the proof of Lemma 3.4 (iii) from \cite{resmerita/anderssen} for each summand and obtain lower semicontinuity of the Kullback-Leibler data fidelity term with the following reasoning:

Since $K (\rho^{0}_{k}(y_k^i)det(\nabla y_k^i)) \rightarrow K (\rho^0(y^i)\operatorname{det}(\nabla y^i))$, we have convergence almost everywhere. Thus we can deduce
\begin{equation*}
g(K\rho^{0}_{k}(y_k^i)\operatorname{det}(\nabla y_k^i))\rightarrow g(K\rho^{0}(y^i)\operatorname{det}(\nabla y^i)) \qquad \text{ in } L^1(\Sigma).
\end{equation*}
Now we can apply Fatou's Lemma \cite[Chapter 1, Theorem 2 (iii)]{Giaquinta1998a} and obtain:
\begin{equation*}
\int\limits_{\Sigma}g(K\rho^{0}(y^i)\operatorname{det}(\nabla y^i))\mathrm{d}\sigma\leq \liminf\limits_{k\rightarrow \infty}\int\limits_{\Sigma}g(K\rho^{0}_{k}(y_k^i)\operatorname{det}(\nabla y_k^i))\mathrm{d}\sigma
\end{equation*}
Having shown lower semicontinuity for an arbitrary summand with fixed i, the assertion follows directly.
\end{proof}
%

We have now stated lower semicontinuity results for a wide range of distance terms, including the Kullback-Leibler data fidelity. We now turn our focus to the TV-regulari\-za\-ti\-on. By setting $\alpha^k=0$ for any $1\leq k \leq N$ in \eqref{eq:functional}, this would follow directly by the properties of the BV-seminorm we mentioned earlier. However to formulate an existence result for $\alpha^k geq 0$, we give the following lemma, which follows with a proof as in  \cite{Burger2013a}:
\begin{lemma}
\label{lem:tv_semicontinuous}
Let $\rho_k \rightharpoonup \rho$ in $L^{1}(\Omega)$. Then 
\begin{equation}
\tv{\rho} \leq  \liminf\limits_{k\rightarrow \infty} \tv{\rho_k}.
\end{equation}
\end{lemma}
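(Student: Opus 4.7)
The plan is to derive this lower semicontinuity directly from the dual definition of the BV-seminorm \eqref{eq:tv_regularization} together with the compatibility of weak $L^1$ convergence with $L^\infty$ test functions. The core observation is that for any admissible vector field $g \in C_0^\infty(\Omega;\mathbb{R}^d)$ with $\|g\|_\infty \leq 1$, its divergence $\nabla \cdot g$ is smooth and compactly supported, hence belongs to $L^\infty(\Omega)$. Therefore $\nabla \cdot g$ is a legitimate dual pairing against any weakly $L^1$-convergent sequence.

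First I would fix an arbitrary test field $g \in C_0^\infty(\Omega;\mathbb{R}^d)$ with $\|g\|_\infty \leq 1$. By definition of weak convergence in $L^1$ and since $\nabla \cdot g \in L^\infty(\Omega)$, we have
\begin{equation*}
\int_\Omega \rho_k \,\nabla \cdot g \,\dx \longrightarrow \int_\Omega \rho \,\nabla \cdot g \,\dx.
\end{equation*}
On the other hand, for every $k$ the quantity $\int_\Omega \rho_k \,\nabla \cdot g \,\dx$ is one of the admissible values in the supremum defining $\tv{\rho_k}$, so it is bounded above by $\tv{\rho_k}$. Passing to the liminf in $k$ yields
\begin{equation*}
\int_\Omega \rho \,\nabla \cdot g \,\dx \;=\; \lim_{k\to\infty} \int_\Omega \rho_k \,\nabla \cdot g \,\dx \;\leq\; \liminf_{k\to\infty} \tv{\rho_k}.
\end{equation*}

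Finally I would take the supremum over all admissible $g$ on the left-hand side. Since the right-hand side is independent of $g$, this gives $\tv{\rho} \leq \liminf_{k\to\infty} \tv{\rho_k}$, which is the claim. The argument is a standard consequence of writing $\tv{\cdot}$ as a supremum of continuous linear functionals (which is automatically weakly lower semicontinuous), so no genuine obstacle is expected; the only point requiring care is verifying that the duality pairing used in \eqref{eq:tv_regularization} is in fact compatible with weak $L^1$ convergence rather than only with weak-$\ast$ convergence in $BV$, and this is secured by the smoothness and compact support of $g$.
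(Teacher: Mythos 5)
Your proof is correct and is precisely the standard dual-formulation argument that the paper invokes by reference to \cite{Burger2013a}: the BV seminorm is a supremum of linear functionals that are continuous under weak $L^1$ convergence (since $\nabla\cdot g\in L^\infty=(L^1)^*$), hence weakly lower semicontinuous. Your closing remark correctly identifies the only point of substance, namely that the pairing survives weak (not just strong) $L^1$ convergence, which is exactly why the lemma is stated this way.
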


We verified the first condition for the existence of a minimizer and turn our focus coercivity.

\begin{lemma}[$\rho$-Coercivity for the Kullback-Leibler divergence]
\label{lem:distance_coercive}
Let our assumptions \eqref{eq:hyperelasmenge} for our transformation and (A1)-(A3) for the operator be fulfilled. Then 
\begin{align*}
J^{1}(\rho^0,y):=\sum\limits_{i}&\left[{\int\limits_{\Sigma}{K (\rho^0 (y^i det(\nabla y^i)))-f^i\log(K (\rho^0 (y^i det(\nabla y^i))))\mathrm{d}\sigma}}\right.\\
&\left.+\alpha^i\tv{\rho^0(y^i)\operatorname{det}(\nabla y^i)}\right]
\end{align*}
is coercive with respect to the variable $\rho^0$.
\end{lemma}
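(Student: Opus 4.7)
The plan is to isolate the $i=0$ summand of $J^1$, which by the convention $y^0 = \mathrm{Id}$ implicit in the mass-preservation constraint \eqref{eq:transformation_model} reduces to $\int_\Sigma(K\rho^0 - f^0\log K\rho^0)\,d\sigma + \alpha^0 \tv{\rho^0}$, a functional depending only on $\rho^0$ and carrying no dependence on $y$. Since the remaining $i\geq 1$ summands are bounded below uniformly in $(\rho^0, y)$ (the BV seminorms trivially by nonnegativity, and each KL integrand pointwise via the minimization $\min_{s\geq 0}(s - f^i\log s) = f^i - f^i\log f^i$, whose integral over $\Sigma$ is finite by the level-set hypothesis \eqref{eq:mcr_level_set_nonempty}), it suffices to prove that the isolated $i=0$ contribution is $\rho^0$-coercive in the $BV$ norm.

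The core estimate is a linear lower bound on the KL term. Applying the concavity inequality $a\log b \leq a\log a + b - a$ (valid for $a, b > 0$) with $a = f^0(\sigma)$ and $b = K\rho^0(\sigma)/2$ yields, after rearrangement and integration,
\begin{equation*}
\int_\Sigma(K\rho^0 - f^0\log K\rho^0)\,d\sigma \;\geq\; \tfrac{1}{2}\int_\Sigma K\rho^0\,d\sigma - C(f^0),
\end{equation*}
where $C(f^0) := \int_\Sigma(f^0\log(2f^0) - f^0)\,d\sigma$ is finite under the integrability hypotheses on the data. Invoking the standard positivity lower bound $K^*1 \geq c > 0$ a.e.\ on $\Omega$ encoded in (A1)--(A3), together with the adjoint identity $\int_\Sigma K\rho^0\,d\sigma = \int_\Omega \rho^0\, K^*1\,dx$, produces $\int_\Sigma K\rho^0\,d\sigma \geq c\,\|\rho^0\|_{L^1(\Omega)}$. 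Combining this with the total variation contribution yields
\begin{equation*}
J^1(\rho^0, y) \;\geq\; \tfrac{c}{2}\|\rho^0\|_{L^1(\Omega)} + \alpha^0 \tv{\rho^0} - C',
\end{equation*}
for a constant $C'$ independent of $(\rho^0,y)$, which is precisely $\rho^0$-coercivity in the $BV$ norm, uniformly in $y$.

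The main obstacle I expect is ensuring that all reference constants are finite; specifically, one must verify $f^i\log f^i \in L^1(\Sigma)$ for each $i$. I would derive this from the level-set condition \eqref{eq:mcr_level_set_nonempty}: finiteness of $J$ at $\rho \equiv 1$ and identity motions forces each $\int_\Sigma(K\cdot 1 - f^i\log K\cdot 1)\,d\sigma$ to be finite, which combined with the positivity and boundedness of $K\cdot 1$ yields the required integrability of $f^i\log f^i$. A secondary technical step is validating the adjoint identity for $\rho^0 \in L^1_+(\Omega)$, guaranteed by the $L^1$-continuity of $K$ included among (A1)--(A3); note also that this strategy deliberately extracts coercivity only from the $i=0$ term and never uses the transformed TV seminorms $\tv{\rho^0(y^i)\det(\nabla y^i)}$ for $i \geq 1$, which in principle could strengthen the estimate via the area formula and injectivity of $y^i$ but are not needed here.
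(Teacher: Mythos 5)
Your proposal is correct and follows essentially the same route as the paper: reduce to the $i=0$ summand via $y^0=\mathrm{Id}$ after shifting each KL term by $f^i\log f^i - f^i$ to make the remaining summands nonnegative, bound the logarithm by its tangent line to extract a term linear in $\int_\Sigma K\rho^0\,\mathrm{d}\sigma$, and pass to $\|\rho^0\|_{L^1}$ through the adjoint and the positivity of $K^*$ assumed in (A1)--(A3). The only differences are cosmetic --- you linearize $\log$ at $2f^0$ where the paper uses $f^0+1$, and you spell out the integrability of $f^i\log f^i$ which the paper leaves implicit.
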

\begin{proof}
We begin by observing:
\begin{equation*}
\log(x)\leq \frac{1}{a}x+\log(a) \hspace{1cm} \forall x>0
\end{equation*}
for any fixed $a>0$.  We add the constant $f \log(f) -f$, such that each summand is non-negative. Note that $y^0=Id$ and therefore we can bind the Kullback-Leibler divergence from below by:
\begin{align*}
&\sum\limits_{i=0}^{N}{\int\limits_{\Sigma}{K(\rho^0(y^i)det(\nabla y^i)) - f^i \log(K(\rho^0(y^i)det(\nabla y^i))) + f^i \log(f^i) -f^i}\mathrm{d}\sigma} \\
\geq&\int\limits_{\Sigma}{K(\rho^0(y^0)det(\nabla y^0))-f^0\log(K(\rho^0(y^0)det(\nabla y^0))) + f^0 \log(f^0) -f^0\mathrm{d}\sigma}\\
=&\int\limits_{\Sigma}{K\rho^0-f^0\log(K\rho^0) + f^0\log(f^0) -f^0\mathrm{d}\sigma}\\
\geq&\int\limits_{\Sigma}{K\rho^0-f^0\left(\frac{1}{f^0+1}K\rho^0 +\log(f^0+1) \right)+ f^0\log(f^0) -f^0 \mathrm{d}\sigma} \\
\geq &\int\limits_{\Sigma}{\frac{1}{f^0+1}K\rho^0 \mathrm{d}\sigma}\underbrace{-\int\limits_{\Sigma}{f^0(f^0+1)+ f^0\log(f^0)-f^0 \mathrm{d}\sigma}}_{:=c_2\in \mathbb{R}}\\
=&\int\limits_{\Omega}{K^{*} \frac{1}{f^0+1} \rho^0 \mathrm{d}x}+c_2 
\geq \int\limits_{\Omega}{c_1 \rho^0 \mathrm{d}x}+c_2\mathrm{d}x \\
=&c_1\|\rho^0\|_1 +c_2
\end{align*}
Therefore we can conclude
\begin{equation}
J_1(\rho^0,y)\geq c_1 \|\rho^0\|_1+\alpha^0\tv{\rho^0}\geq\min\{c_1,\alpha^0\}(\|\rho^0\|_1+\tv{\rho^0}).
\end{equation}
\end{proof}

\begin{rem}
The lemma holds not only for the Kullback-Leibler divergence, but for any distance of the form
\begin{equation}
D(K\rho^i,f^i)=\int\limits_{\Sigma}g^i(K\rho^i)\mathrm{d}\sigma
\end{equation}´
with $g^i$ convex, which is bounded below and satisfies
\begin{equation}
\label{eq:distance_coercivity_condition}
g^0(K\rho^0)\geq c_1 K\rho^0+c_2
\end{equation}
with constants $c_1 >0$ and $c_2\in \mathbb{R}$, only dependent of $f$.
\end{rem}

We have proved that the first part of our functional is coerciv in $\rho_0$, so it remains to be shown, that the hyperelastic regularization is coercive with respect to $y$. Fortunately this has already been done in \cite[Chapter 2, Lemma 1]{Ruthotto2012a}. Combining these results, we can finally prove Theorem \ref{theorem:Existence_mcr_functional}:
\begin{proof}(of Theorem \ref{theorem:Existence_mcr_functional})
The coercivity properties of distance \eqref{eq:distance_coercivity_condition}, TV and hyperelastic regularization ensure the existence of a bounded level set. Compactness is then granted by the  Banach-Alaoglu Theorem. Thus a minimizing sequence $(\rho_k,y_k)$ has a convergent subsequence with limit $(\rho,y)$, while the convergence is understood component wise. By the lower semicontinuity properties of distance (Lemma \ref{lem:distance_semicontinuous}), TV (Lemma \ref{lem:tv_semicontinuous}) and hyperelastic regularization we obtain:
\begin{equation*}
J(\rho,y) \leq \liminf J(\rho_k,y_k) = \inf J
\end{equation*}
Finally as a result of the compactness of $\mathscr{K}$ we have $(\rho,y) \in BV(\Omega) \times \mathscr{K}^{N}$.
\end{proof}

\subsection{Convergence of the Regularization Method}

Having stated existence results for the variational problem of motion-corrected reconstruction we can show with analogous weak compactness and lower semicontinuity arguments that the minimization of our functional \eqref{eq:functional} can be understood as a (nonlinear) regularization method, i.e. there exist appropriate limites as noise and regularization parameter tend to zero \cite{Engl1996}. 
\begin{theorem}
\label{thm:mcr_reg_method}
Let $(f^i_k)_k$ be a sequence of noisy data with
\begin{equation}
\lim_k f^i_k = f^i_{*},
\end{equation}
where $f^{*}$ is the exact data for an image $\rho_*$ and transformations $y_*$, such that:
\begin{equation}
(\rho_*,y_*)=\min\limits_{\rho,y} \sum\limits_{i=0}^{N}D(K(\rho(y^ i)\operatorname{det}(\nabla y^i )),f^i_*)
\end{equation}
for a non-negative distance $D$, which is lower semicontinuous in both arguments and fulfilling \eqref{eq:distance_coercivity_condition}. Furthermore, we define a sequence of functionals $J_k$ by:
\begin{equation}
J_k=\sum\limits_{i=0}^{N}(D(K(\rho^0(y^i)\operatorname{det}(\nabla y^i)),f^i_k)+\alpha^i_k \tv{\rho^i})+\sum\limits_{i=1}^{N}\beta^i_k S^{hyper}(y^i)
\end{equation}
Then for $\alpha_k \rightarrow 0$ and $\beta_k \rightarrow 0$   with
\begin{equation}
\label{eq:mcr_parameter_choice}
\frac{\sum\limits_{i=0}^{N}D(f_*^i,f^i_k)}{\min\limits_{i} \{a_k^i,\beta_k^i\}} \rightarrow 0 \qquad  \frac{\max\limits_{i} \{a_k^i,\beta_k^i\}}{\min\limits_{i} \{a_k^i,\beta_k^i\}} \leq C \in \mathbb{R} \forall k
\end{equation} 
the sequence $(\hat{\rho}_k,\hat{y}_k)_k$, with $(\hat{\rho}_k,\hat{y}_k)$ being minimizers of $J_k$ has a convergent subsequence and the limit $(\hat{\rho},\hat{y})$ fulfills: 
\begin{equation}
\sum\limits_{i} D(\hat{\rho}(\hat{y}^i)\operatorname{det}(\nabla \hat{y}^i),f_*^i)= \sum\limits_{i} D(\rho_*(y_*^i)\operatorname{det}(\nabla y_*^i),f_*^i)
\label{eq:estimation_compositon}
\end{equation}
\end{theorem}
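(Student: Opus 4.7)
The plan is to follow the standard variational regularization pattern: use the optimality of $(\hat\rho_k,\hat y_k)$ against the competitor $(\rho_*,y_*)$, extract limits via the same weak-$*$ compactness used in Theorem \ref{theorem:Existence_mcr_functional}, and pass to the limit in the data fidelity using Theorem \ref{theorem:konvergenz} together with the lower semicontinuity results from Lemmas \ref{lem:distance_semicontinuous} and \ref{lem:tv_semicontinuous}. The concluding equality then follows by comparing with the optimality of $(\rho_*,y_*)$ for the exact data.

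First I would derive uniform a priori bounds. Testing with $(\rho_*,y_*)$ gives
\begin{equation*}
J_k(\hat\rho_k,\hat y_k) \leq J_k(\rho_*,y_*) = \sum_{i=0}^N D(K\rho_*^i,f_k^i) + \sum_{i=0}^N \alpha_k^i \tv{\rho_*^i} + \sum_{i=1}^N \beta_k^i S^{hyper}(y_*^i),
\end{equation*}
where $\rho_*^i := \rho_*(y_*^i)\operatorname{det}(\nabla y_*^i)$. Under the exact-data interpretation the first sum is dominated by $\sum_i D(f_*^i,f_k^i)$, which by \eqref{eq:mcr_parameter_choice} is $o(\min_i\{\alpha_k^i,\beta_k^i\})$, while the ratio bound in \eqref{eq:mcr_parameter_choice} makes the other two sums of order $O(\min_i\{\alpha_k^i,\beta_k^i\})$. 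Dividing the inequality by $\alpha_k^i$ respectively $\beta_k^i$ yields uniform boundedness of $\tv{\hat\rho_k^i}$ and $S^{hyper}(\hat y_k^i)$, and Lemma \ref{lem:distance_coercive}, combined with the uniform data-term bound, supplies an $L^1$-bound for $\hat\rho_k^0$ (its constants depending only on bounded data-dependent quantities, which is compatible with $f_k^i\to f_*^i$).

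These bounds give enough compactness to mimic the extraction procedure from Theorem \ref{theorem:Existence_mcr_functional}: Banach--Alaoglu yields $\hat\rho_k^0 \rightharpoonup^* \hat\rho$ in $BV(\Omega_0)$ and $\hat y_k^i \rightharpoonup^* \hat y^i$ in $\mathscr{A}$, with Theorem \ref{thm:properties_hyperelastic_regularization} identifying the weak limits of the cofactors and determinants as $\operatorname{cof}(\nabla \hat y^i)$ and $\operatorname{det}(\nabla \hat y^i)$. Restricting minimization to a closed subset of injective admissible transformations (cf.\ Remark \ref{remark:injectivity_remark}) and invoking Theorem \ref{thm:injectivity_closed_constraint} gives injectivity of each $\hat y^i$, hence the uniform Banach-indicatrix bound required by Theorem \ref{theorem:konvergenz}. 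That theorem then delivers $\hat\rho_k^0(\hat y_k^i)\operatorname{det}(\nabla \hat y_k^i) \rightharpoonup \hat\rho(\hat y^i)\operatorname{det}(\nabla \hat y^i)$ in $L^1(\Omega)$, and compactness of $K$ strengthens this to strong $L^1(\Sigma)$-convergence of the projections $K\hat\rho_k^i \to K\hat\rho^i$.

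Combining the competitor bound with lower semicontinuity of $D$ (Lemma \ref{lem:distance_semicontinuous}) and continuity of $D(K\rho_*^i,\cdot)$ at $f_*^i$ yields
\begin{equation*}
\sum_i D(K\hat\rho^i,f_*^i) \leq \liminf_{k\to\infty} \sum_i D(K\hat\rho_k^i,f_k^i) \leq \sum_i D(K\rho_*^i,f_*^i),
\end{equation*}
while optimality of $(\rho_*,y_*)$ for the exact data supplies the reverse inequality, giving \eqref{eq:estimation_compositon}. The principal obstacle is obtaining the convergence of the composed expressions $\hat\rho_k^0(\hat y_k^i)\operatorname{det}(\nabla\hat y_k^i)$ when both factors vary with $k$; this hinges on the uniform Banach-indicatrix bound of Theorem \ref{theorem:konvergenz}, which as in Theorem \ref{theorem:Existence_mcr_functional} forces us to restrict to a closed class of injective transformations via Theorem \ref{thm:injectivity_closed_constraint}. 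A secondary technical point is justifying continuity of $D$ in its data argument, which one reads out of the noise-level control built into \eqref{eq:mcr_parameter_choice}.
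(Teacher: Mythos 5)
Your proposal is correct and follows exactly the route the paper intends: the paper does not write out a proof of Theorem \ref{thm:mcr_reg_method} but states that it follows ``with analogous weak compactness and lower semicontinuity arguments'' to Theorem \ref{theorem:Existence_mcr_functional}, and the subsequent remark displays the same competitor-based chain of inequalities you use to bound the regularization terms via \eqref{eq:mcr_parameter_choice}. Your two flagged caveats --- the need to restrict to a closed injective class so that Theorems \ref{thm:injectivity_closed_constraint} and \ref{theorem:konvergenz} apply, and the need for continuity of $D$ in its data argument (which mere lower semicontinuity in both arguments does not supply) --- are genuine gaps left implicit in the paper as well, so identifying them is a point in your favour rather than a deviation.
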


Note that the estimation \eqref{eq:estimation_compositon} only holds for the composition and not for the components. Since different transformations can lead to the same transformed image  we can in general not expect to derive convergence for both components. 

\begin{rem}
In order to deal with the non uniqueness of the solution yielded by Theorem \ref{thm:mcr_reg_method} we assume additionally
\begin{enumerate}
\item $ \lim\limits_{k}\frac{\sum\limits_{i=0}^{N}D(K\rho^i_*,f^i_k)}{\min\limits_{i}\{\alpha_k^i,\beta_k^i\}}=0$ ,
\item For all $\alpha^i_k$, $\beta^i_k$ there exist $\tilde{\alpha}^i=\lim\limits_{k}\frac{\alpha^i} {\min\limits_{i}\{\alpha_k^i,\beta_k^i\}}$, resp. $\tilde{\beta}^i=\lim\limits_{k}\frac{\beta^{i}}{\min\limits_{i}\{\alpha_k^i,\beta_k^i\}}$,
\end{enumerate}
then we can deduce
\begin{align*}
&\sum\limits_{i=0}^{N}\tilde{\alpha}^i\tv{\hat{\rho}^i}+\sum\limits_{i=1}\tilde{\beta}^i S^{hyper}(\hat{y}^i)\\
\leq \liminf\limits_{k}&\sum\limits_{i=0}^{N}\tilde{\alpha}^i\tv{\hat{\rho}^i_k}+\sum\limits_{i=1}^N\tilde{\beta}^iS^{hyper}(\hat{y}^i_k)\\
\leq\lim\limits_{k}&\left(\frac{\sum\limits_{i=0}^{N}D(K\rho^i_*,f^i_k)}{\min\limits_{i}\{\alpha_k^i,\beta_k^i\}}+\sum\limits_{i=0}^{N}\tilde{\alpha}^i\tv{\rho^i_*}+\sum\limits_{i=1}^N\tilde{\beta}^iS^{hyper}(y^i_*)\right)\\
=&\sum\limits_{i=0}^{N}\tilde{\alpha}^i\tv{\rho^i_*}+\sum\limits_{i=1}^N\tilde{\beta}^iS^{hyper}(y^i_*).
\end{align*}
By using this deduction we can show that $(\hat{\rho},\hat{y})$ is a solution, which minimizes 
\begin{equation*}
\sum\limits_{i=0}^N \tilde{\alpha}^i\tv{\cdot}+\sum\limits_{i=1}^N\tilde{\beta}^iS^{hyper}(\cdot) \quad \forall \quad (\rho,y) \quad \text{with} \quad \sum\limits_{i=0}^{N}D(K\rho^i,f_*^i)=0.
\end{equation*}
This solution can be viewed analogously to the best-approximate solution in the sense of Engl et al. \cite[Definition 2.1]{Engl1996}.
\end{rem}

\section{Numerical Solution}
In this section we describe the numerical framework we use to solve the motion-corrected reconstruction problem. We restrict the presentation of the numerical framework to the Kullback-Leibler divergence as distance measure; nevertheless the extension to other distances with given TV-regularized reconstruction algorithms is straightforward. 

We aim to perform a First-Discretize-then-Optimize approach combined with an alternating minimization strategy. For this we only need a discretization in the space-domain, since we assume to have time-discretized data. For the time-discretization we assume, that the discrete data $(f_i)_i$ is gated: Therefore we define time nodes $(t_i)_i$ such that each node $t_i$ represents a stage of motion (f.e. cardiac or respiratoric gate). Additionally we impose regularization only on the reference configuration $\rho_0$ and not on the transformed versions of $\rho_0$, so by setting $\alpha^i=0$ for $i>0$ we obtain the following functional to be minimized

\begin{align*}
J(\rho_0,y)&=\sum\limits_{i}{\int\limits_{\Sigma}{K (\rho^0 (y^i) det(\nabla y^i))-f^i\log(K (\rho^0 (y^i( det(\nabla y^i)))))\mathrm{d}\sigma}} \\
&+ \tv{\rho^0}+\sum\limits_{i}{S^{hyper}(y^i)} .\\
\end{align*}

Out discretization is straightforward: We define pairwise disjoint pixel $B_i$, such that

\begin{equation}
\Omega = \bigcup_i B_i.
\end{equation}

With this discretization at hand we can put our problem in a discrete framework and start to minimize our functional. Given an initial value $\rho_0^{0}$ our algorithm reads as follows:
\begin{equation*}
\begin{cases}
\text{1. Motion step:} & y_{k+1}\in\underset{y}{\argmin}  \left\{J(\rho^0_{k},y)\right\} \\
\text{2.  Reconstruction step:} & \rho^0_{k+1}\in\underset{\rho^0}\argmin \left\{ J(\rho^0,y_{k+1}) \right\}
\end{cases}.
\end{equation*}

While the reconstruction step can be realized via motion corrected EM-TV algorithms (see e.g. \cite{Brune2009}), the motion step needs some caretaking. In the next sections we want to outline both implementations briefly.

\subsection{Reconstruction-Step: Motion-Corrected EM-TV}
\label{subsection:reconstruction_step}

In this section we want to show that we can apply standard EM-TV algorithms to motion-corrected reconstructions. For ease of presentation we assume that the transformations $y^i$ are global invertible, as e.g. in the case of Dirichlet boundary conditions (Proposition \ref{proposition:degree_boundary}). The central idea for EM-TV is to alternate an EM step with given discrete projection operator (so called system matrix) with a TV denoising step \cite{Sawatzky2013}. For a classical reconstruction problem
\begin{equation}
\label{eq:recon_tv_standard}
\min\limits_{\rho} \int\limits_{\Sigma}K\rho - f \log(K\rho)+\alpha \tv{\rho}
\end{equation}
 the FB-EMTV algorithm is given by alternation of a reconstruction (EM step) with a denoising (TV) step:
\begin{equation*}
\begin{cases}
u_{k+\frac{1}{2}}=\frac{u_k}{K^* 1_{\Sigma}}K^*\left(\frac{f}{Ku_{k}}\right) \\
u_{k+1}\in \argmin\limits_{u \in BV(\Omega)}\left \{\frac{1}{2}\int\limits_{\Omega}\frac{K^* 1_{\Sigma}\left(u-u_{k+\frac{1}{2}}\right)^2}{u_k}\mathrm{d}x+\alpha \tv{u}\right\}
\end{cases}
\end{equation*}

We will show how to modify the distance term, so that it fits the structure described in \eqref{eq:recon_tv_standard}. Since the transformation is a linear operator acting on the image,  we can write our transformation model \eqref{eq:transformation_model} equivalently as

\begin{equation}
\rho^{i}=T_{y^i}^{mp} \rho_0.
\end{equation} 

Projecting the transformed reference configuration $\rho^i$ into the measurement domain $\Sigma$ is then given by

\begin{equation}
A^{0}\rho^i=A^{0}\left(T_{y^i}^{mp} \rho^0\right).
\end{equation}

Now using the associativity property leads to
\begin{equation}
A^{0}\rho^{i}=\underbrace{\left(A^0 T_{y^i}^{mp}\right)}_{=:A^i}\rho^0,
\end{equation}
which gives us a motion-corrected projection operator $A^i$, acting on $\rho^0$ only. Now we can formulate the motion-corrected reconstruction problem as
\begin{equation}
\label{eq:motioncorrrec}
\min\limits_{\rho} \int\limits_{\Sigma}\underbrace{\begin{pmatrix} A^{0} \\ A^{1} \\ \cdots \\ A^{N} \end{pmatrix}}_{:=K}\rho^0- \underbrace{\begin{pmatrix} f^0 \\ f^1 \\ \cdots \\f^{N}\end{pmatrix}}_{:=f}\log(A\rho^0)+\alpha \tv{\rho^0}
\end{equation}
After this tuning our problem is suited for the the EM-TV algorithm \cite{Sawatzky2013} with $u:=\rho^0$.

\subsection{Motion-Step: Interpretation as Registration}

In the Motion-Step we need to minimize
\begin{align*}
&\sum\limits_{i}{\int\limits_{\Sigma}{K (\rho_0 (y^i) \operatorname{det}(\nabla y^i))-f_i\log(K (\rho_0 (y^i)\operatorname{det}(\nabla y^i)))\mathrm{d}\sigma}} \\
&+ \tv{\rho_0}+\sum\limits_{i}{S^{hyper}(y^i)} \\
\end{align*}

with respect to our set of transformations $(y^i)_i$. Note that the problem decouples, so we obtain a minimum of the sum, by minimizing each summand. Since $\tv{\rho_0}$ is a constant we retain a problem of the form:
\begin{equation}
\min\limits_{y^i}\int\limits_{\Sigma}{K (\rho_0 (y^i) det(\nabla y))-f_i\log(K (\rho_0 (y^i) \operatorname{det}(\nabla y)))\mathrm{d}\sigma}+S^{hyper}(y^i)
\label{eq:mcr_registration_problem}
\end{equation}

If we consider the Kullback-Leibler divergence being an distance measure, this is the form of a standrad registration problem from \cite{Modersitzki2009} with hyperelastic regularization. Nevertheless we should mention that the Kullback-Leibler distance measure is defined on the detector domain $\Sigma$ and not on the image domain $\Omega$ like standard distance measures such as SSD or the Normalized Gradient Field \cite{Modersitzki2009}. Similar to the widely known FAIR toolbox \cite{Modersitzki2009} we perform a quasi Newton type optimization, which we will outline shortly in the following, starting with the discretization:

Since $\Sigma$ is the detector domain and therefore discrete with size $m_{\Sigma}$, the integral becomes a sum:
\begin{equation}
\sum\limits_{j=1}^{m_{\Sigma}}{K (\rho_0 (y^i) \operatorname{det}(\nabla y^i))(j)-f_i(j)\log(K (\rho_0 (y^i) det(\nabla y^i))(j))}+S^{hyper}(y(\cdot,t_i))
\label{eq:disregistrierung}
\end{equation}
Rather than classical Gauss-Newton we aim to perform a minimization with a modified BFGS method \cite{Dong-HuiLi2001}. In order to minimize the objective function \eqref{eq:disregistrierung} we need to compute the derivatives with respect to the transformation grid. We assume that the transformation is given on a nodal grid:

\begin{equation}
yc\in \mathbb{R}^{\tilde{m}} \qquad \tilde{m}=d\prod\limits_{i=1}^{d}(m_i+1) ,
\end{equation}

where $m=(m_1,..,m_d)$ denotes the size of the digital image obtained in the reconstruction step. Again we use the interpolation $\operatorname{inter}$ and computation of the Jacobian determinant $\operatorname{jac}$, implemented in the FAIR toolbox \cite{Modersitzki2009}. Then for any continuously differentiable distance term 
\begin{equation} 
D:\left(\mathbb{R}^{m_{\Sigma}}\right)^{2} \rightarrow \mathbb{R} \qquad (K (\operatorname{inter}(\rho^0,yc)\operatorname{jac}(yc)),f) \rightarrow D(K(\operatorname{inter}(\rho^0,yc)\operatorname{jac}(yc)),f)
\end{equation}
the derivative with respect to the transformation grid $yc$ is given by the chain rule as
\begin{align}
&\frac{\mathrm{d}}{\mathrm{d}\, yc}D(K(\operatorname{inter}(\rho^0,yc)\operatorname{jac}(yc)),f)\nonumber\\
=&\frac{\mathrm{d}}{\mathrm{d} \, w}D(K(\operatorname{inter}(\rho^0,yc)\operatorname{jac}(yc)),f)\left(K\left(\frac{\mathrm{d}}{\mathrm{d} \, yc}\operatorname{inter}+\frac{\mathrm{d}}{\mathrm{d} \, yc} \operatorname{jac}\right)\right).
\label{eq:projected_distance_derivative}
\end{align}
With the help of \eqref{eq:projected_distance_derivative} we can deal with different distance terms in the same objective function. 

The actual registration is then performed with a multilevel approach. Since the field of view in the scanner is often bigger than the studied object, it is reasonable to impose Dirichlet boundary conditions on the motion with $y|_{\partial \Omega}=Id$, which guarantees the existence of a minimizer (Remark \ref{remark:injectivity_remark}). This boundary conditions can be realized by taking the identity as starting guess and modifying the search directions to zero at the boundary. 

\section{Results}

This section is divided into two parts. In the first part we study the performance of motion and image estimation with help of a simple artificial deblurring problem. After having shown the superiority of the proposed method on this dataset we turn our focus towards a proof of concept for the clinical applicability by studying the XCAT software phantom.

\subsection{Artificial Deblurring Example}

Deblurring problems are often occuring in microscopy, where the exact image gets convoluted with an (often unknown) point-spread function \cite{Landi2012,Carlavan2012}. Since the focus in this section lies on the motion-corrected reconstruction, we  assume the exact blur operator to be known. In order to assess the performance of the proposed method we consider a ring shaped object in three different stages of shrinkage (Figure \ref{fig:noisy_data}).

\begin{figure}[H]
\label{fig:noisy_and_error}
\centering
\subcaptionbox{Noisy Data\label{fig:noisy_data}}{\includegraphics[scale=0.3]{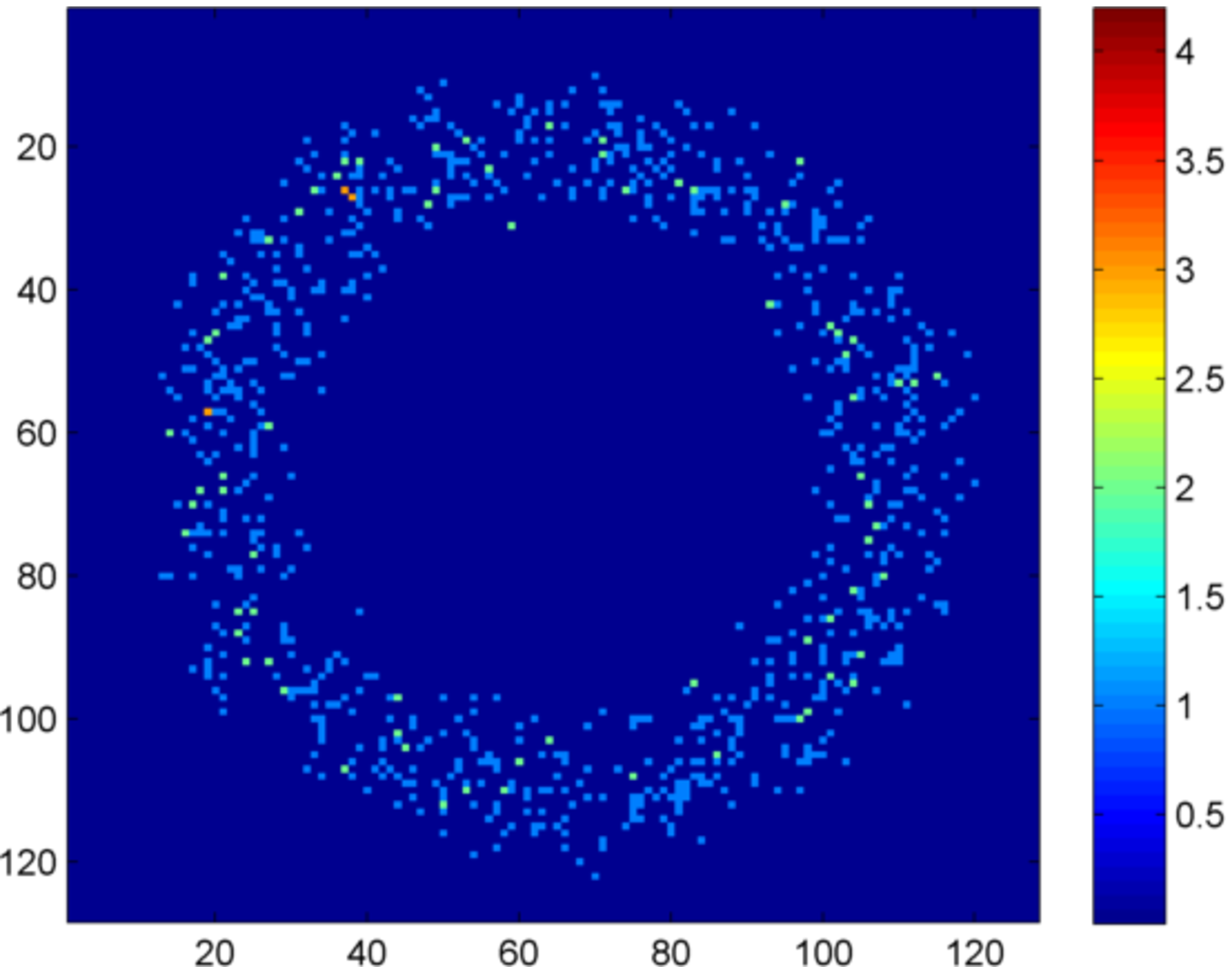}\hspace{0.2cm}\hspace{0.2cm}}
\subcaptionbox{Reconstruction Error\label{fig:recon_error}}{\includegraphics[scale=0.35]{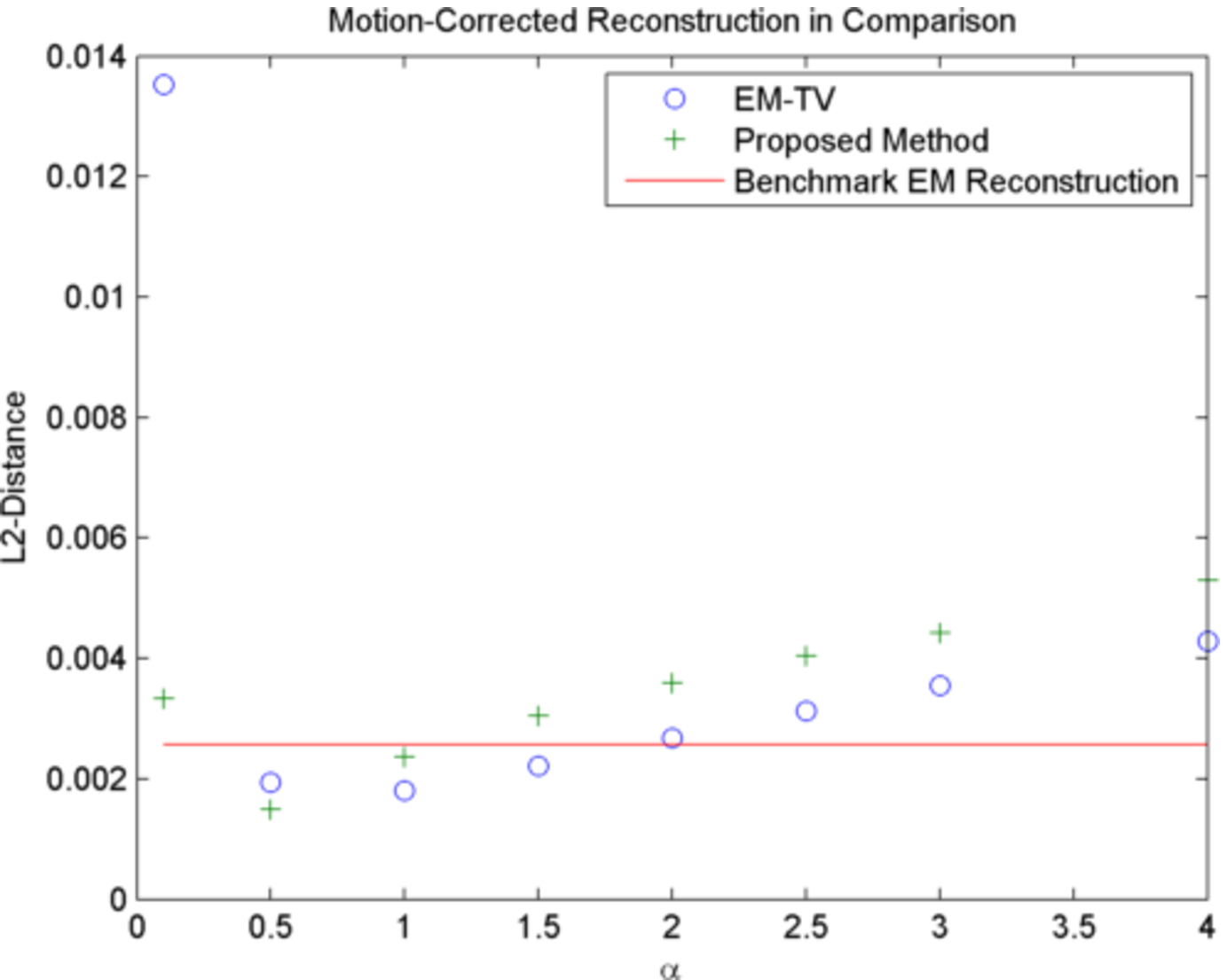}}
\caption{Noisy data for one Gate and reconstruction errors or the proposed Method ($5$ Bregman Iterations), EM-TV ($5$ Bregman Iterations) reconstruction and reconstruction error yielded by classical EM as benchmark. The proposed method yields the best reconstruction result. }
\end{figure}

As we see in Figure \ref{fig:recon_error} the proposed method performs better than the TV-regularized expectation maximization. To conclude this quantitative comparison we focus on different motion estimation methods. In order to assess the quality of the estimation we compare the motion estimation by the proposed method with an affine mass-preserving 2D registration performed on TV regularized single gate reconstructions. For doing so, we picked the best reconstruction in the terms of the reconstruction error for the EM-TV reconstruction of each gate and registered them.

\begin{figure}[H]
\centering
\includegraphics[scale=0.35]{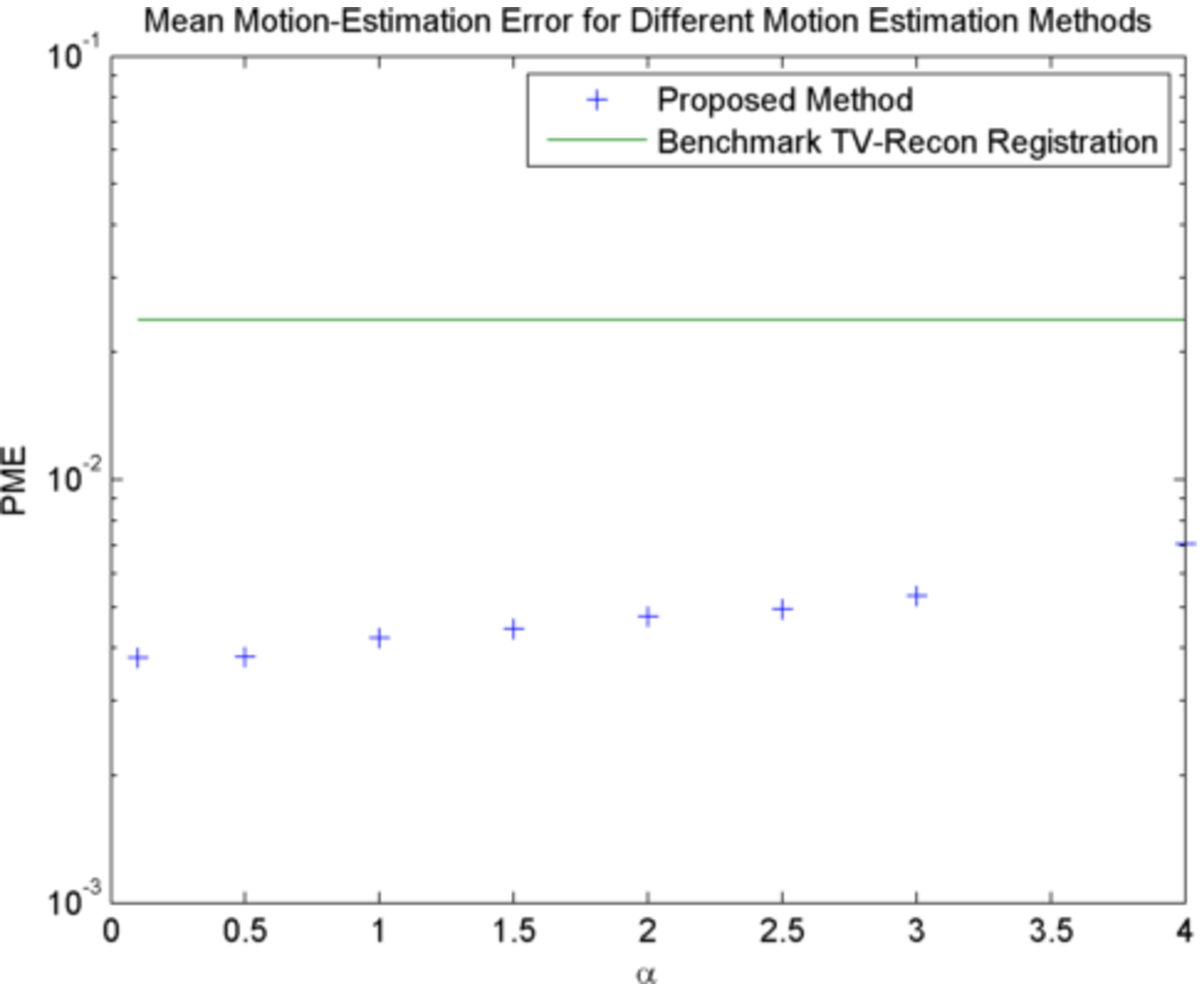}
\hspace{0.2cm}
\includegraphics[scale=0.35]{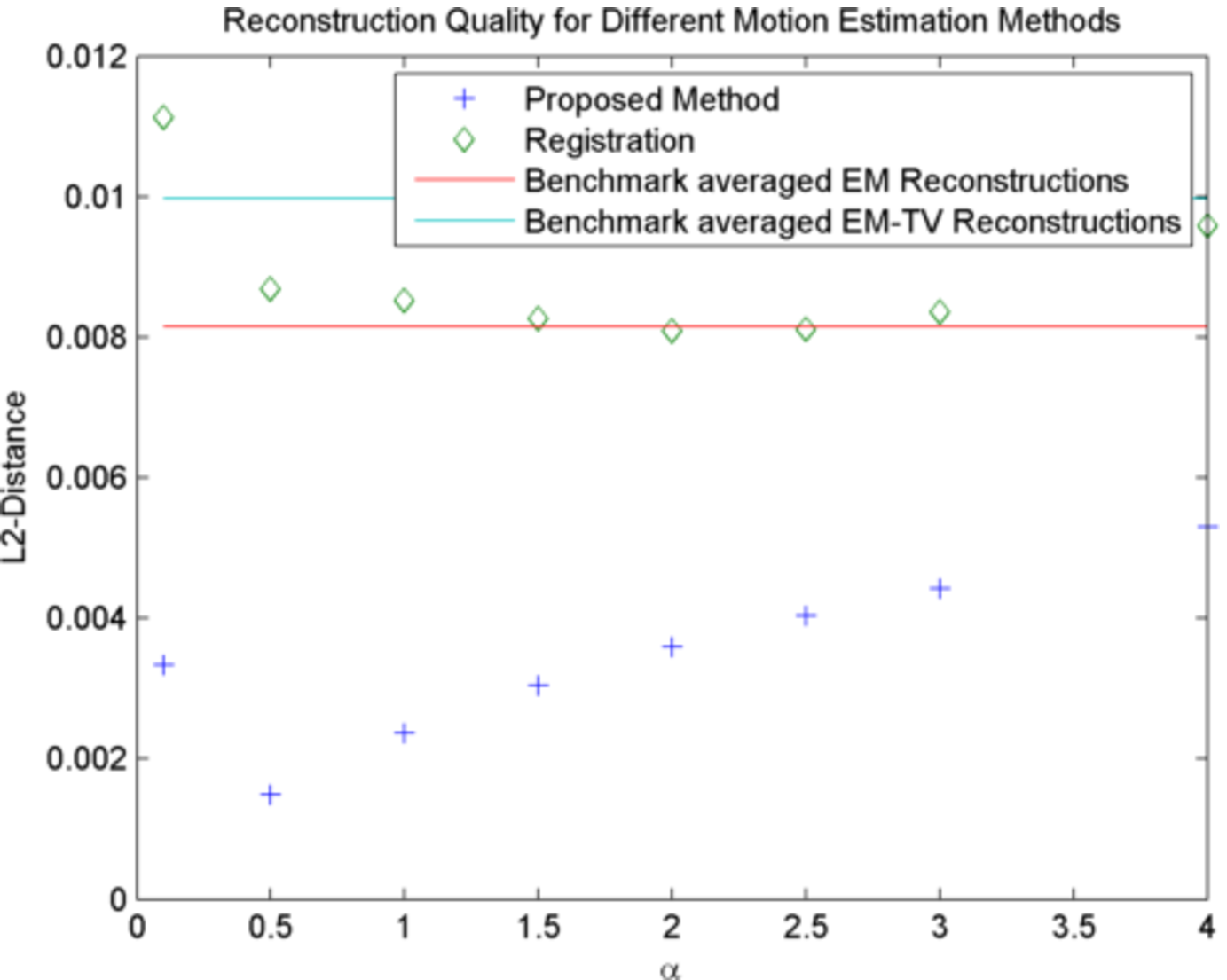}
\caption{Logarithmic plot of the averaged phantom matching errors for transformations yielded by the proposed method and registration performed on the best TV regularized single gate reconstructions as benchmark. The phantom matching errors for all transformations were averaged. The error for the proposed method is smaller by an order of magnitude. Reconstruction error for the proposed method, motion-corrected EM-TV with motion determined by registration of single gate registrations an averaged (TV regularized) single gate reconstructions as benchmark. The other motion correction techniques suffer heavily from the inaccurate motion estimation.}
\label{fig:pme_and_reg}
\end{figure}

\begin{figure}[H]
\centering
\captionsetup{justification=centering}
\subcaptionbox{Ground truth}{\includegraphics[scale=0.15]{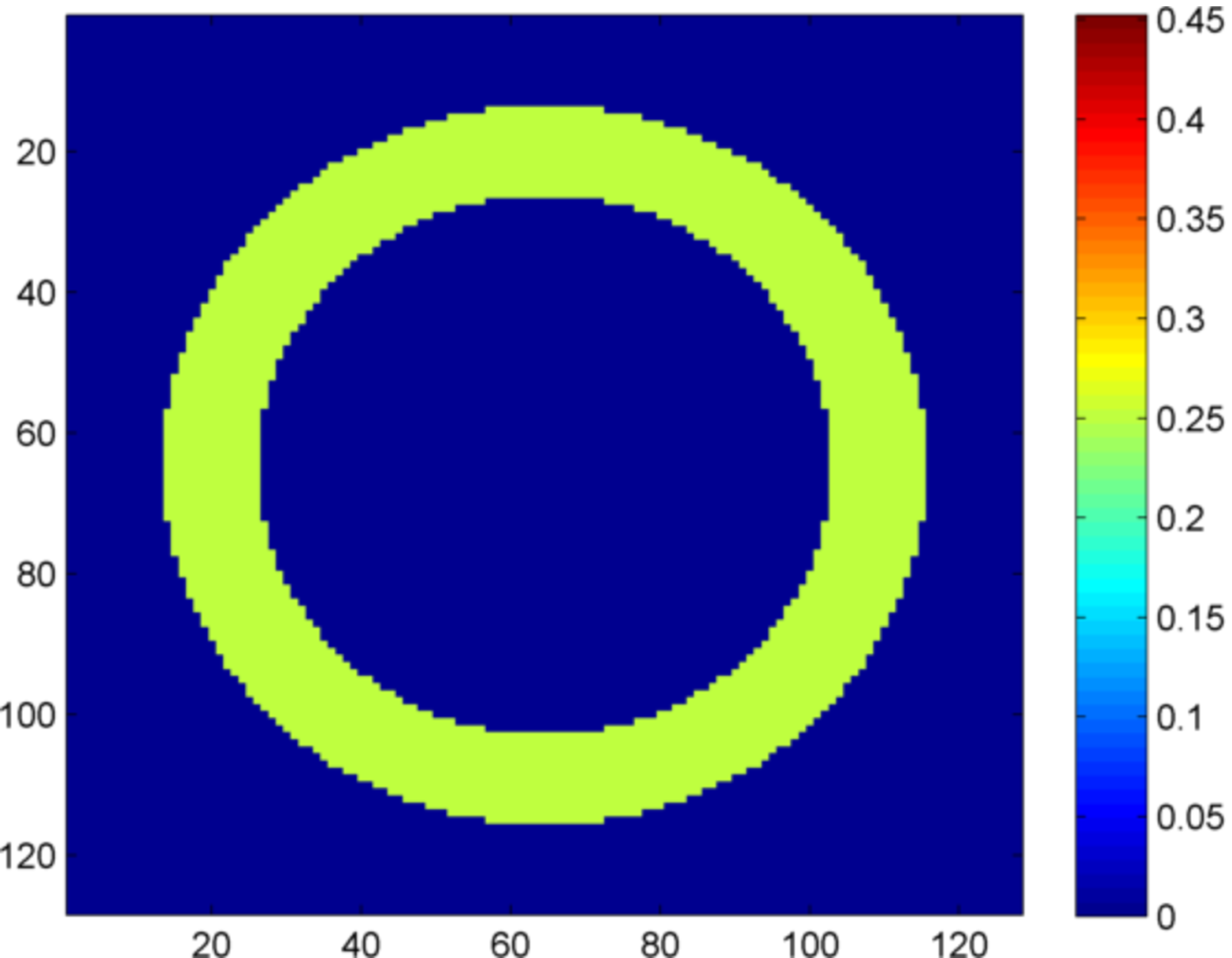}}\hspace{0.2cm}
\subcaptionbox{Averaged EM reconstruction}{\includegraphics[scale=0.15]{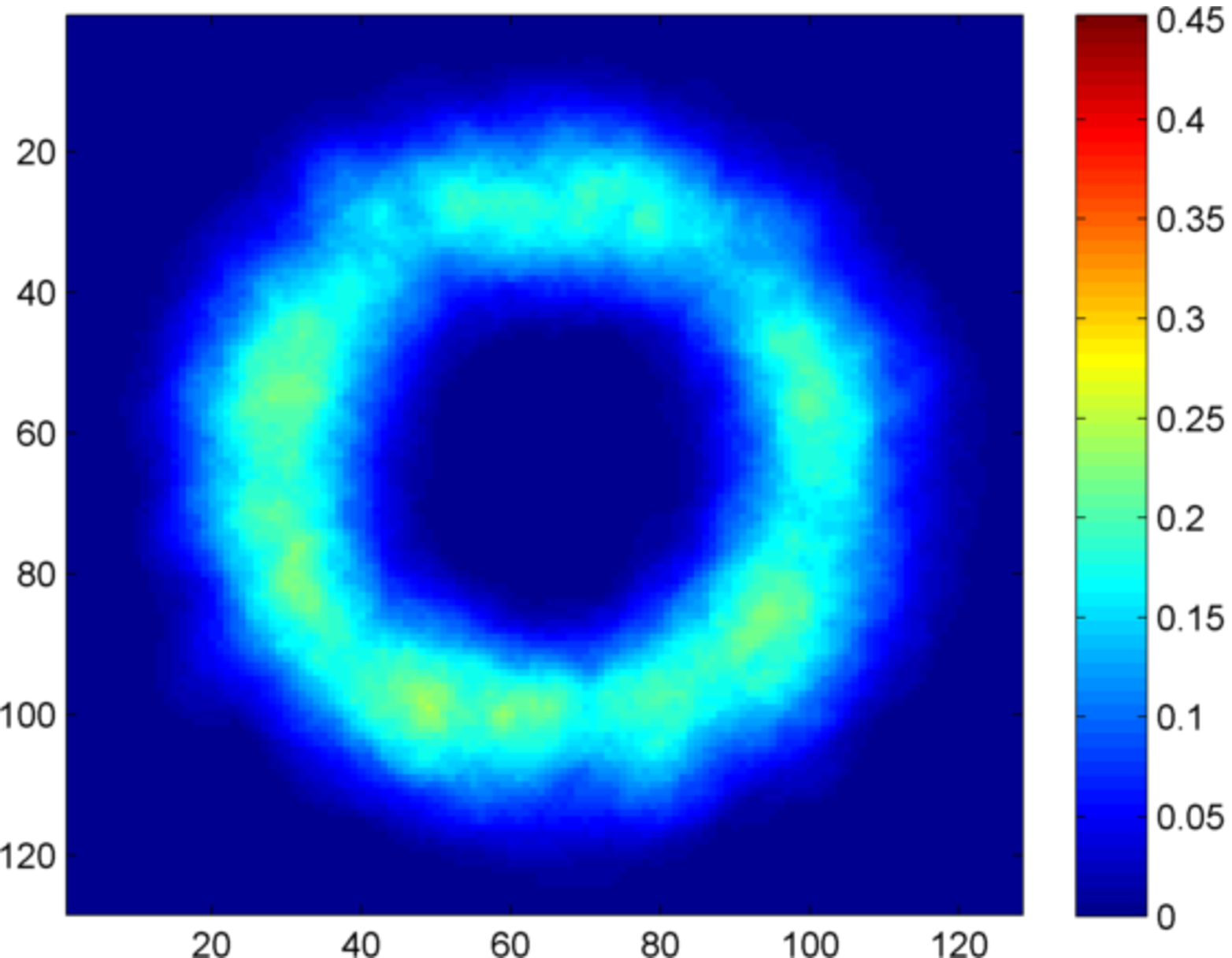}}\hspace{0.2cm}
\subcaptionbox{Averaged EM-TV reconstruction}{\includegraphics[scale=0.15]{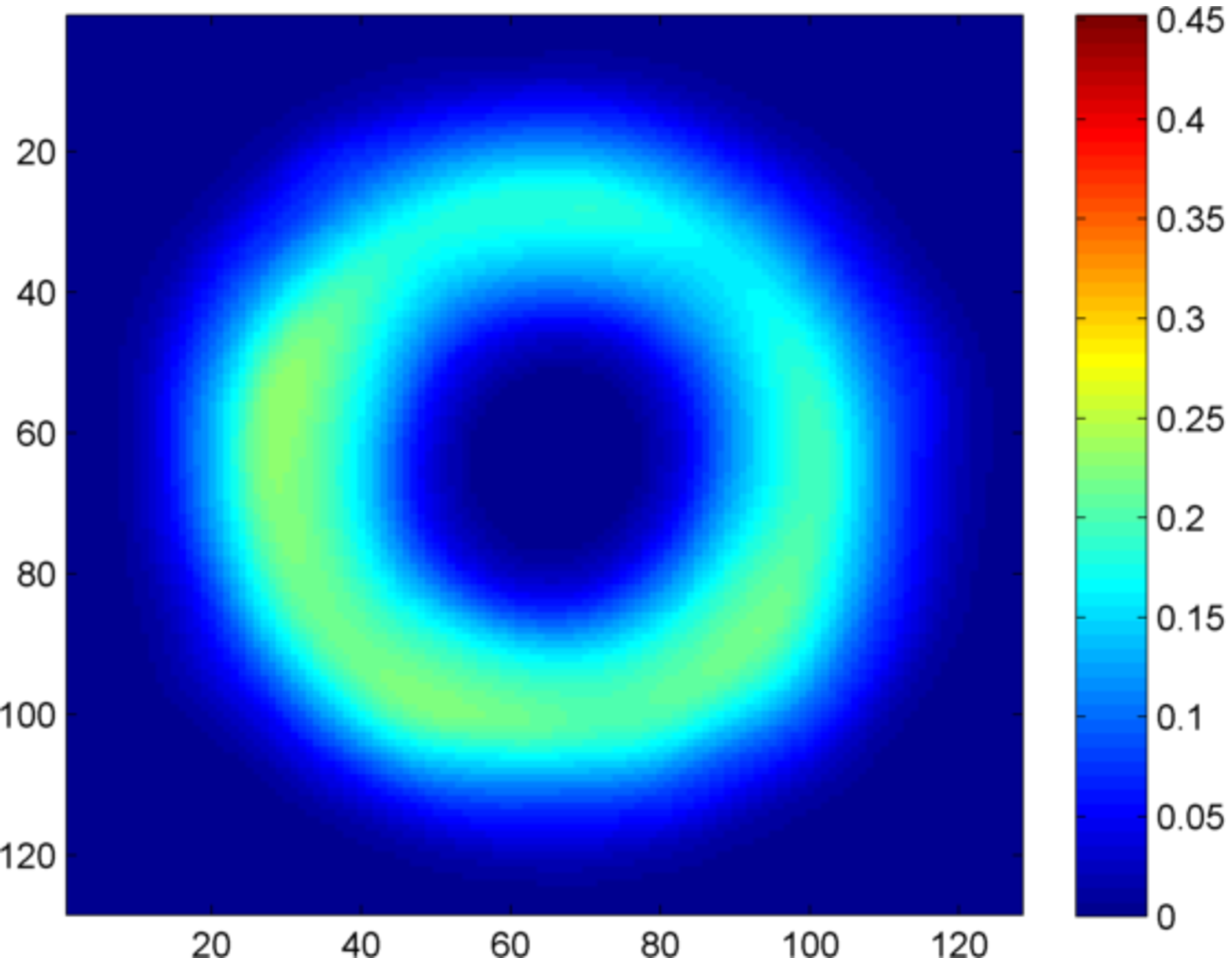}}
\hspace{0.2cm}
\subcaptionbox{Registration based \\
m.c. EM-TV}{\includegraphics[scale=0.15]{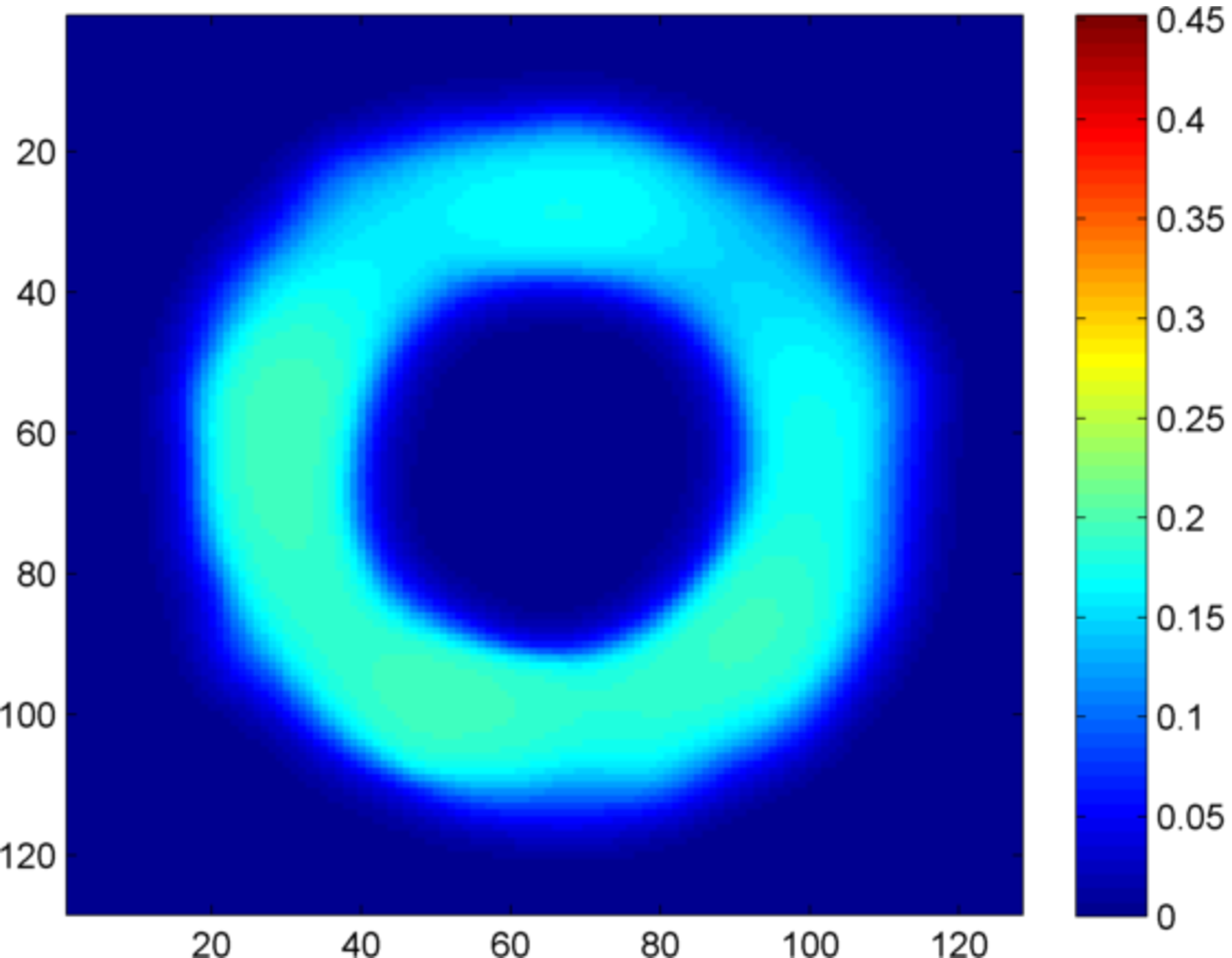}}\hspace{0.2cm}
\subcaptionbox{Proposed method}{\includegraphics[scale=0.15]{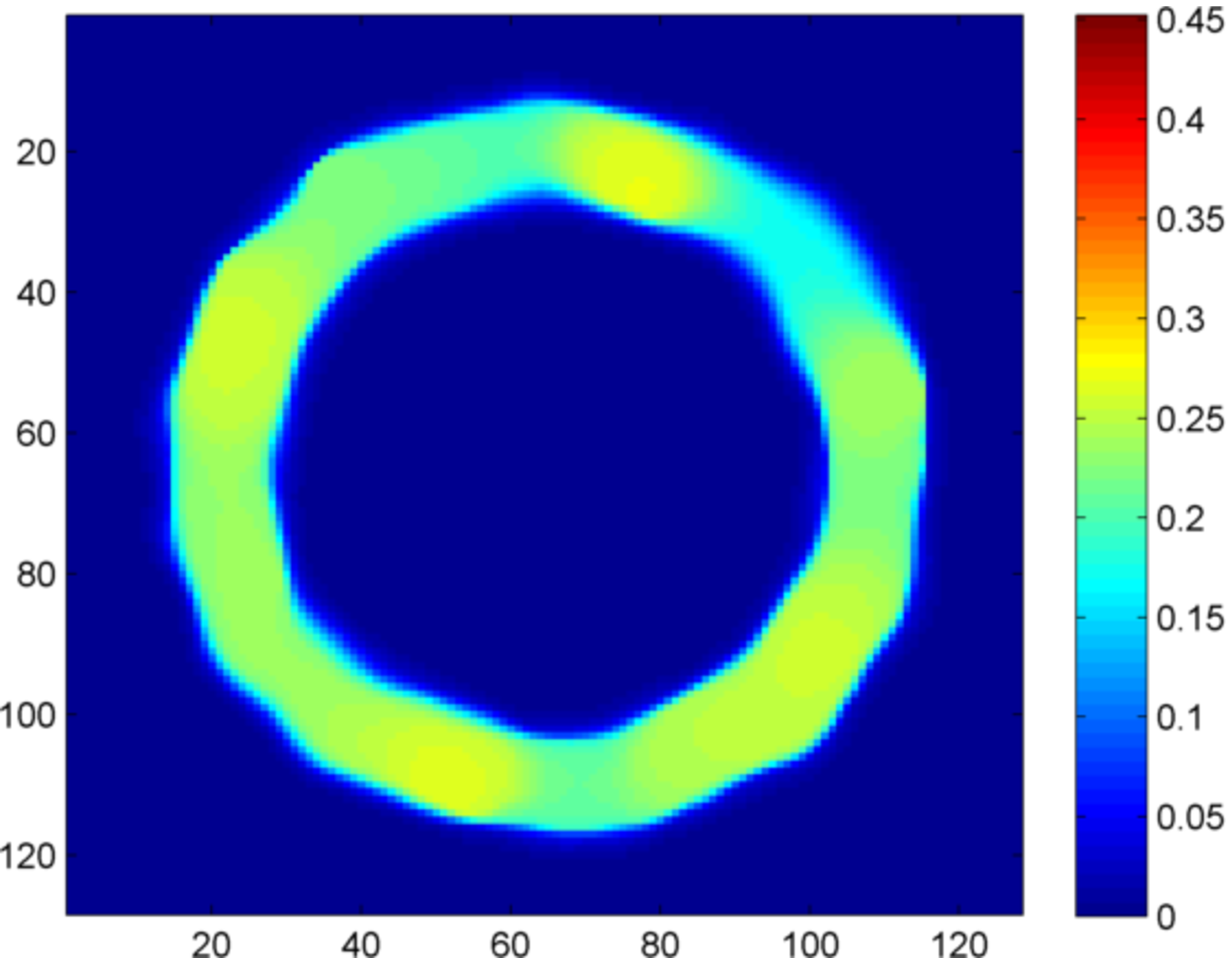}}
\captionsetup{justification=justified}
\caption{Best (TV-regularized) single gate reconstruction averaged by registration, best motion-corrected EM-TV reconstruction with motion estimation via registration of EM-TV reconstructed single gates and best reconstruction yielded by the proposed method. Note that the precise size of the ring gets lost due to the inaccurate motion estimation in the registration based methods.}
\label{fig:motion_comparison}
\end{figure}

We see that the inaccurate motion estimation by registering TV regularized single gate reconstruction has severe impact on the reconstruction quality. Since the edges are not exactly aligned by the registration, this imprecise motion estimation leads to an inexact size of the ring in the reconstruction, respectively to a blurring of the edges in the averaging process (Figure \ref{fig:motion_comparison}).

\medskip

In conclusion we can state the advantages of the proposed method are twofold. First incorporating motion-estimation directly is superior to averaging methods, since the reconstruction is performed from full data instead of just averaging images reconstructed from parts of the data. Despite small errors in the motion information the proposed methods also performs better than the single gate reconstruction with TV regularization. 
Additionally the motion-estimation yielded by the proposed method was clearly superior to the registration based methods. We can give the following three possible explanations for this:
\begin{itemize}
\item In contrast to single gate reconstructions the proposed methods transforms a template image, which is generated from the full data set.
\item By projecting the transformed template into the measurement domain the error occurring from reconstructing the reference image from part of the data is avoided.
\item The Kullback-Leibler data fidelity used in the proposed method is directly adapted to the Poisson noise characteristics of the data, while the SSD distance we use for the registration is used for Gaussian denoising.
\end{itemize} 

\subsection{XCAT Software Phantom}

In this subsection we inspect the performance of three reconstructions methods on data generated by the XCAT software phantom \cite{Segars2010}.  For generating the data we projected four cardiac gates of the phantom into the data spaces specified by the \text{Siemens Biograph Sensation 16} scanner provided by the EMRecon toolbox \cite{T.Koesters2011}. The projected data was downscaled by the factor $1000$ corrupted with Poisson noise and then scaled up again. We reconstructed the first gate from this single gate data with the classical EM algorithm, EM-TV algorithm, and the method we described in this chapter applied on the full data.

\begin{figure}[H]
\centering
\captionsetup{justification=centering}
\subcaptionbox{Ground Truth}{\includegraphics[scale=0.25]{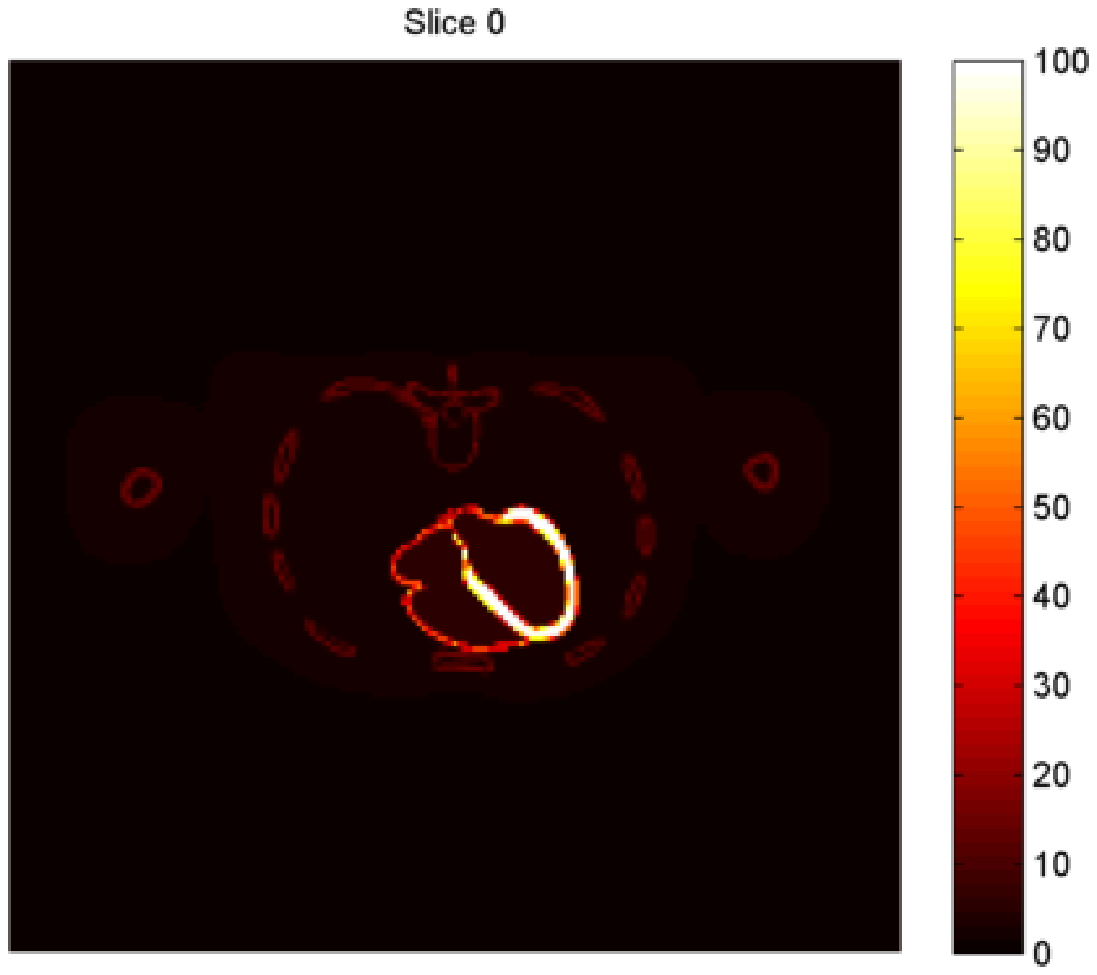}\hspace{0.5cm}}
\subcaptionbox{EM}{\includegraphics[scale=0.25]{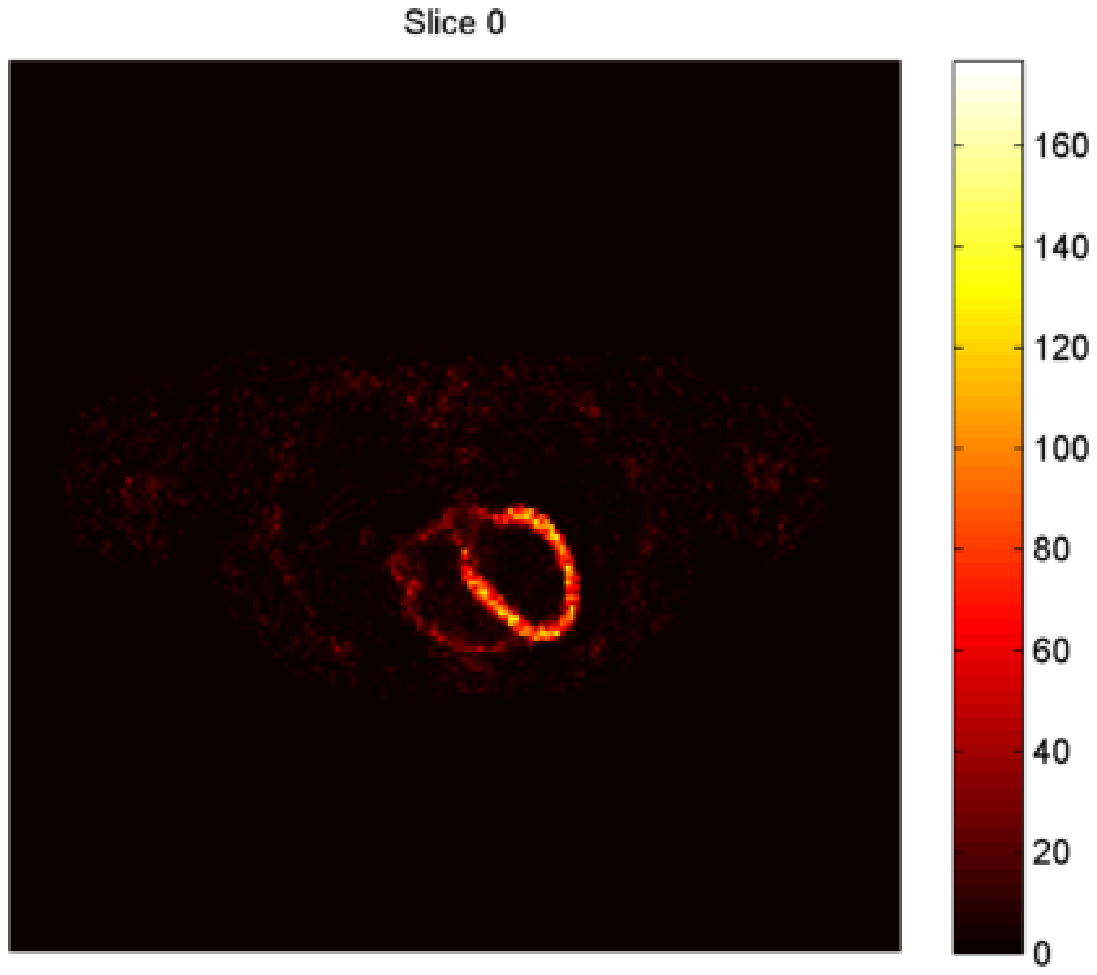}}
\hspace{0.5cm}
\subcaptionbox{EM-TV}{\includegraphics[scale=0.25]{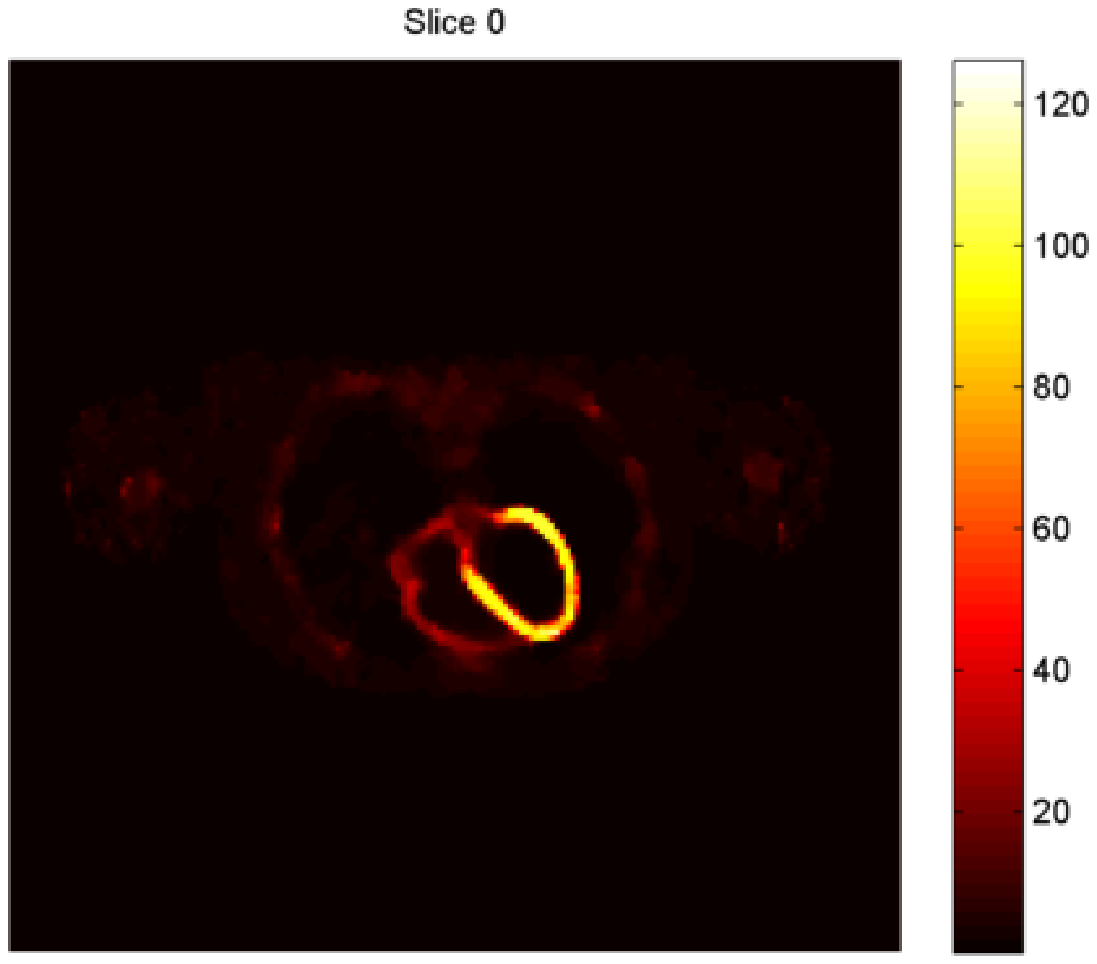}\hspace{0.5cm}}
\subcaptionbox{Proposed method}{\includegraphics[scale=0.25]{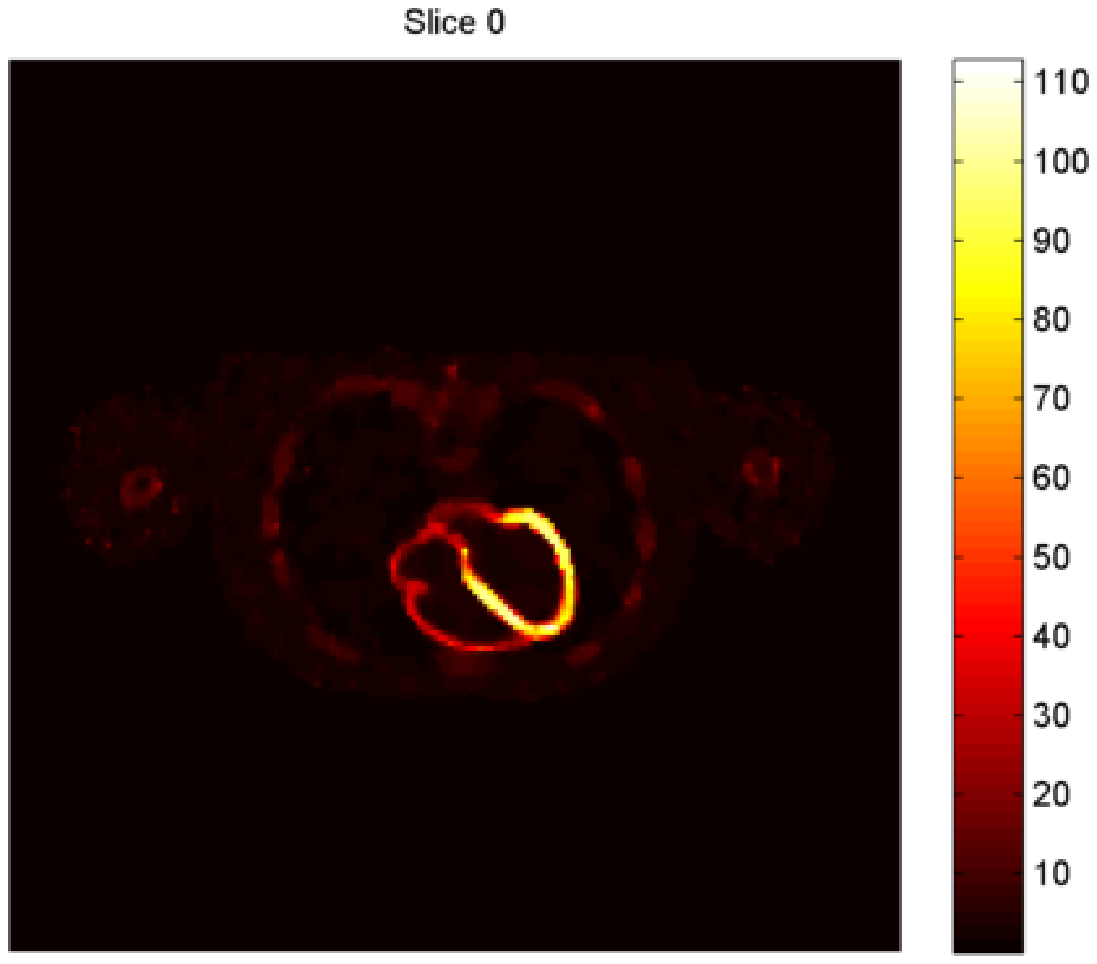}}
\captionsetup{justification=justified}
\caption{Ground truth and reconstructions EM, EM-TV ($\alpha=125$) and the proposed method ($\alpha=1000$). The TV regularized methods produce clearly better reconstruction results than the classical expectation maximization algorithm. The right ventricle is captured slightly better by the proposed method.}
\label{fig:xcat}
\end{figure}

Visual inspection of the reconstructed images shows that despite having a higher reconstruction error the proposed method captures structures with a low amount of activity better than both single gate reconstruction methods (Figure \ref{fig:xcat}). As we see the right ventricle stands out much clearer by incorporating motion information. This illustrates that the proposed method shows its potential for regions with a poor signal-to-noise ratio. 

\section{Weak Formulation of the Reconstruction Problem}

The analysis of functional \eqref{eq:functional} showed, that we needed assumptions on the injectivity resp. boundedness of the Banach indicatrix of the transformation in order to derive analytical results. We want to outline a weak formulation, which guarantees mass preservation, but does not assume injectivity of the transformation. We conclude this outlook by showing that the weak formulation implies injectivity for the motion $y$ and thus corresponds to restricting the admissible set of transformation to a certain subset of the weak diffemorphisms. 

We start by giving an equivalent formulation of the transformation model \eqref{eq:transformation_model} for a diffeomorphism $y^i$ with inverse $z^i$:

\begin{equation}
\label{eq:mcr_weak_transformation_model}
\int\limits_{\Omega^i} \rho^i(x)\varphi(x)\mathrm{d}x=\int\limits_{\Omega^i}\rho^0(y^i(x))\operatorname{det}(\nabla y^i(x))\varphi(x)\mathrm{d}x \qquad \forall \varphi \in C_0(\Omega).
\end{equation}

By applying the change of variables formula we obtain

\begin{equation}
\int\limits_{\Omega^i}\rho^i(x)\varphi(x)\mathrm{d}x=\int\limits_{\Omega}\rho^0(x)
\varphi(z^i(x))\mathrm{d}x \qquad \forall \varphi \in C_0(\Omega).
\end{equation}

Since $\rho^i \geq 0$ we can relax to Radon measures with $\mathrm{d}\mu^i$ generalizing $\rho^i\mathrm{d}x$:

\begin{equation}
\label{eq:mcr_relaxed_radon_constraint}
\int\limits_{\Omega^i}\varphi(x)\mathrm{d}\mu^i(x)=\int\limits_{\Omega}\rho^0(x) \varphi(z^i(x))\mathrm{d}x \qquad \forall \varphi \in C_0(\Omega).
\end{equation}
Alternatively we can use $\varphi(y^i(x))$ as test function:

\begin{equation}
\int\limits_{\Omega^i}\varphi(y^i(x))\mathrm{d}\mu^i(x)=\int\limits_{\Omega} \rho^0(x)\varphi(x)\mathrm{d}x \qquad \forall \varphi \in C_0(\Omega).
\end{equation}

In order to use \eqref{eq:mcr_relaxed_radon_constraint} as a constraint for minimizing \eqref{eq:functional} we provide consistency with the strong formulation by the following propsition.

\begin{proposition}
\label{proposition:mcr_weak_consistency}
Let $\rho^0$ be nonnegative and $z^i\in\mathscr{A}$ an admissible transformation. Then there exists a unique, nonnegative $\mu^i$  satisfying \eqref{eq:mcr_relaxed_radon_constraint}, which fulfils the mass-preservation property
\begin{equation}
\label{eq:mcr_weak_mass_preservation}
\int\limits_{\Omega^i}\mathrm{d}\mu^i(x)=\int\limits_{\Omega}\rho^0(x)\mathrm{d}x.
\end{equation}
\end{proposition}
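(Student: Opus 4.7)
The plan is to construct $\mu^i$ via the Riesz--Markov representation theorem applied to the right-hand side of \eqref{eq:mcr_relaxed_radon_constraint}, and then verify the mass identity \eqref{eq:mcr_weak_mass_preservation} by testing with a sequence approximating the constant $1$.

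First, I would introduce the linear functional $\Lambda : C_0(\Omega) \to \mathbb{R}$ defined by
$$\Lambda(\varphi) := \int_\Omega \rho^0(x)\,\varphi(z^i(x))\,dx.$$
Since $z^i \in \mathscr{A}$ is approximately differentiable almost everywhere and hence Lebesgue-measurable, the composition $\varphi \circ z^i$ is bounded and measurable for every $\varphi \in C_0(\Omega)$. Together with $\rho^0 \in L^1(\Omega)$ this shows that $\Lambda$ is well defined, linear, and satisfies
$$|\Lambda(\varphi)| \leq \|\varphi\|_\infty\,\|\rho^0\|_{L^1(\Omega)},$$
so $\Lambda$ is bounded. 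Nonnegativity of $\rho^0$ immediately gives positivity of $\Lambda$, i.e.\ $\Lambda(\varphi) \geq 0$ whenever $\varphi \geq 0$. By the Riesz--Markov theorem there is a unique nonnegative Radon measure $\mu^i$ (supported in $\Omega^i$) with $\int \varphi \, d\mu^i = \Lambda(\varphi)$ for every $\varphi \in C_0(\Omega)$, which is exactly \eqref{eq:mcr_relaxed_radon_constraint}.

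For the mass-preservation identity I would next take cutoff functions $\varphi_n \in C_0(\Omega)$ with $0 \leq \varphi_n \leq 1$ and $\varphi_n \nearrow 1$ pointwise on $\Omega$, for instance by multiplying smooth cutoffs supported in a nested exhaustion of $\Omega$ by compact subsets. Plugging $\varphi_n$ into \eqref{eq:mcr_relaxed_radon_constraint} and passing to the limit, monotone convergence on the left and dominated convergence (dominated by $\rho^0 \in L^1$) on the right give
$$\mu^i(\Omega^i) = \lim_{n\to\infty} \int_{\Omega^i} \varphi_n\,d\mu^i = \lim_{n\to\infty} \int_\Omega \rho^0(x)\,\varphi_n(z^i(x))\,dx = \int_\Omega \rho^0(x)\,dx,$$
using $\varphi_n(z^i(x)) \to 1$ for a.e.\ $x$. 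This last pointwise limit is where the admissibility of $z^i$ enters, since we need $z^i(x) \in \Omega$ up to a null set; this follows from $\det(\nabla z^i) > 0$ a.e.\ and the fact that admissible maps are weak local diffeomorphisms, which together with Lusin's condition (after redefinition on a null set, Theorem \ref{theorem:area_formula}) prevents the image from escaping $\Omega$ on a set of positive measure.

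The only genuinely delicate point in this plan is that last pointwise convergence step, which is purely a geometric statement about the range of admissible $z^i$; everything else is routine measure theory once the Riesz--Markov setup is in place, and uniqueness requires no separate argument since it is part of the Riesz--Markov conclusion.
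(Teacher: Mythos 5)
Your construction is exactly the paper's: the right-hand side of \eqref{eq:mcr_relaxed_radon_constraint} is a bounded, positive linear functional on $C_0(\Omega)$, Riesz--Markov gives existence and uniqueness of a nonnegative Radon measure $\mu^i$, and \eqref{eq:mcr_weak_mass_preservation} is obtained by testing with functions increasing to the constant $1$. You fill in more detail than the paper does (the explicit bound $|\Lambda(\varphi)|\leq\|\varphi\|_\infty\|\rho^0\|_{L^1}$, positivity, and the monotone/dominated convergence passage), and all of that is fine.

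The one place where your argument as written does not hold up is the justification of the pointwise limit $\varphi_n(z^i(x))\to 1$ for a.e.\ $x$. You attribute it to $\det(\nabla z^i)>0$ a.e., local weak invertibility, and Lusin's condition, but none of these confine the image of $z^i$ to $\Omega$: a translation by a large vector is a smooth orientation-preserving map with Jacobian identically $1$ satisfying Lusin's condition, yet it maps all of $\Omega$ outside $\Omega$, and then the right-hand side of your limit would be $0$ rather than $\int_\Omega\rho^0\,dx$. What is actually needed is that $z^i(x)\in\Omega$ (and not merely $\overline{\Omega}$, since $\varphi_n\in C_0(\Omega)$ vanishes near $\partial\Omega$) for a.e.\ $x$ with $\rho^0(x)>0$. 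This is a standing modelling assumption on the deformations rather than a consequence of membership in $\mathscr{A}$; it can be secured, e.g., by the Dirichlet boundary condition $y|_{\partial\Omega}=\mathrm{Id}$ used elsewhere in the paper together with the degree argument of Proposition \ref{proposition:degree_boundary}. The paper's own proof silently assumes this as well, so the gap is one of justification rather than of strategy, but the specific reason you give for that step is not correct and should be replaced by the self-map assumption.
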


\begin{proof}
Note that for any fixed $\rho^i$ and $z^i$ the right hand side of \eqref{eq:mcr_relaxed_radon_constraint} defines a linear functional. Thus there exists a unique Radon measure $\mu^i$ satisfying \eqref{eq:mcr_relaxed_radon_constraint} \cite[Chapter 1.4 Theorem 2]{Giaquinta1998a}. The mass-preservation condition \eqref{eq:mcr_weak_mass_preservation} can by shown by using test functions converging to constants.
\end{proof}

As a consequence we obtain for the weak formulation of the motion-corrected reconstruction problem:

\begin{align}
\tilde{J}(\mu,z)=&\sum\limits_{i=0}^{N}(D(K\mu^i,f^i)+\alpha^i\tv{\mu^i})+\sum\limits_{i=1}^{N}\beta^{i}S^{hyper}(z^i)\nonumber\\
\text{subject to } & \eqref{eq:mcr_relaxed_radon_constraint}
\label{eq:mcr_weak_formulation}
\end{align}

We will not elaborate further on the analysis for the weak formulation, but instead present a consistency result:

\begin{proposition}
Let $\rho^i \in L^1$ and $y^i, z^i$ fulfil \eqref{eq:mcr_weak_transformation_model} and \eqref{eq:mcr_relaxed_radon_constraint}. Then we have $N_{y^i}(\cdot,\Omega)\leq 1$ a.e. and $z^i(x)=(y^{i})^{-1}(x)$ for a.e. $x\in \Omega$.
\end{proposition}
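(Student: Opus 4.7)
The plan is to fuse the two formulations into a single integral identity, rewrite the strong side via the area formula, and then extract a pointwise statement that forces $(y^i)^{-1}(z)=\{z^i(z)\}$.

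First I would use $\rho^i\in L^1$ and $\rho^i\geq 0$ to identify $\mathrm{d}\mu^i$ in \eqref{eq:mcr_relaxed_radon_constraint} with $\rho^i\,\mathrm{d}x$. Inserting the strong relation \eqref{eq:mcr_weak_transformation_model} into \eqref{eq:mcr_relaxed_radon_constraint} then yields, for every $\varphi \in C_0(\Omega)$,
\begin{equation*}
\int_{\Omega} \rho^0(y^i(x))\operatorname{det}(\nabla y^i(x))\varphi(x)\,\mathrm{d}x = \int_\Omega \rho^0(x)\,\varphi(z^i(x))\,\mathrm{d}x.
\end{equation*}
For clarity I treat $\Omega$ and $\Omega^i$ as identified (the general case is analogous). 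I would then apply the second identity of Theorem~\ref{theorem:area_formula} to the left-hand side with $u(x):=\rho^0(y^i(x))\varphi(x)$, so that $u(w)=\rho^0(z)\varphi(w)$ for $w\in(y^i)^{-1}(z)$, giving
\begin{equation*}
\int_{\Omega} \rho^0(z)\Bigl[\sum_{w\in (y^i)^{-1}(z)\cap \Omega}\varphi(w) - \varphi(z^i(z))\Bigr]\mathrm{d}z = 0 \qquad \forall \varphi \in C_0(\Omega).
\end{equation*}

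The next step is to promote this integral identity to a pointwise one. I would select a countable family $\{\varphi_k\}\subset C_0(\Omega)$ that is dense in the sup norm. For each $\varphi_k$ the integrand vanishes outside a Lebesgue null set $N_k$; setting $N:=\bigcup_k N_k$, for every $z\in\{\rho^0>0\}\setminus N$ the identity $\sum_{w\in(y^i)^{-1}(z)\cap\Omega}\varphi_k(w)=\varphi_k(z^i(z))$ holds for all $k$. Since $\operatorname{det}(\nabla y^i)\in L^1(\Omega)$, Theorem~\ref{theorem:area_formula} applied with $u\equiv 1$ gives $\int_{\mathbb{R}^d}N_{y^i}(z,\Omega)\,\mathrm{d}z<\infty$, so $N_{y^i}(z,\Omega)<\infty$ for a.e.\ $z$. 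At such $z$ both sides are sup-norm continuous linear functionals in $\varphi$, so density extends the identity to every $\varphi\in C_0(\Omega)$. Testing with bump functions that separate the finitely many preimages then forces $(y^i)^{-1}(z)\cap\Omega=\{z^i(z)\}$, whence $N_{y^i}(z,\Omega)=1$ and $z^i(z)=(y^i)^{-1}(z)$.

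The main obstacle is precisely this upgrade from an integral identity to a pointwise one. The signed combination $\sum_w \delta_w - \delta_{z^i(z)}$ carries no fixed sign, so one cannot conclude directly via uniqueness of a Riesz representation on positive measures; the remedy is the countable-density argument together with the a.e.\ finiteness of the Banach indicatrix, which is what makes $\varphi\mapsto\sum_w\varphi(w)$ a bounded linear functional on $C_0(\Omega)$. One mild caveat: the argument only delivers the conclusions on $\{\rho^0>0\}$, so the stated a.e.\ claim should be read modulo the null set $\{\rho^0=0\}$, which is harmless in the density-image setting where one typically has $\rho^0>0$ a.e.
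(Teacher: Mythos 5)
Your proposal is correct and follows essentially the same route as the paper: both combine \eqref{eq:mcr_weak_transformation_model} with \eqref{eq:mcr_relaxed_radon_constraint}, rewrite the left-hand side via the second identity of Theorem~\ref{theorem:area_formula} as $\int\rho^0(z)\sum_{w\in(y^i)^{-1}(z)\cap\Omega}\varphi(w)\,\mathrm{d}z$, and then extract $N_{y^i}\leq 1$ and $z^i=(y^i)^{-1}$ from the resulting identity against test functions. Your execution of the final step (countable dense family, a.e.\ finiteness of the Banach indicatrix, separating bumps) is in fact more careful than the paper's brief appeal to test functions converging to constants, and your explicit caveat that the conclusion is only obtained on $\{\rho^0>0\}$ is a limitation the paper's own argument shares but does not state.
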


\begin{proof}
We can deduce with the second part of the area formula (Theorem \ref{theorem:area_formula}):

\begin{align*}
\int\limits_{\Omega^i}\rho^i(x)\varphi(x)\mathrm{d}x&=\int\limits_{\Omega^i}\rho^0 (y^i(x))\operatorname{det}(\nabla y^i(x))\varphi(x)\mathrm{d}x \\
&=\int\limits_{\mathbb{R}^d}\sum\limits_{w\in((y^i)^{-1}(x)\cap \Omega)}\rho^0(y^i(w))\varphi(w)\mathrm{d}x \\
&=\int\limits_{\mathbb{R}^d}\rho^0(x)\sum\limits_{w\in((y^i)^{-1}(x)\cap \Omega)}\varphi(w)\mathrm{d}x.
\end{align*}

By using Proposition \ref{proposition:mcr_weak_consistency} and \eqref{eq:mcr_relaxed_radon_constraint} we obtain:

\begin{equation}
\int\limits_{\mathbb{R}^d}\rho^0(x)\sum\limits_{w\in((y^i)^{-1}(x)\cap \Omega)}\varphi(w)\mathrm{d}x=\int\limits_{\Omega}\rho^0(x) \varphi(z^i(x))\mathrm{d}x \qquad \forall \varphi \in C_0(\Omega).
\end{equation}

By using test functions $\varphi$ converging to constants, we can now deduce, that $N_{y^i}(\cdot,\Omega)\leq 1$. It follows that $y^i$ is weakly invertible and we can deduce
\begin{equation}
\label{eq:inverse_weak_formulation}
\int\limits_{\mathbb{R}^d}\rho^0(x) \varphi((y^i)^{-1}(x))\mathrm{d}x=\int\limits_{\Omega}\rho^0(x) \varphi(z^i(x))\mathrm{d}x \qquad \forall \varphi \in C_0(\Omega).
\end{equation}

Since \eqref{eq:inverse_weak_formulation} holds for all test functions $\varphi \in C_0(\Omega)$, the assertion follows.
\end{proof}

The proposition above guarantees that for a solution $(\mu,z)$ of the weak formulation \eqref{eq:mcr_weak_formulation}, which can be expressed via \eqref{eq:mcr_relaxed_radon_constraint} and \eqref{eq:mcr_weak_transformation_model}, the motion field $z^i$ is the weak inverse of $y^i$. Whether such an $y$ exists for a given weak solution $(\mu, z)$ is object to further research as well as a detailed analysis for the weak formulation.

\section{Discussion and Outlook}

We have presented a novel variational approach to motion corrected reconstruction of density images. After the motivation of the model with help of Bayesian statistics, we proceeded to the analysis of the model. Central part of the analysis was Theorem \ref{theorem:konvergenz}, which ensured us weak $L^{1}$-convergence of the sequence of transformated images, where images as well as transformations were sequences. To prove this theorem we relied heavily on regularity properties granted by the regularizers for intensity and motion vectors. A critical point was the boundedness of the Banach indicatrix in $L^{\infty}$. It remains unclear, if we can establish a bound of the form
\begin{equation}
\|N_y(\cdot,\Omega)\|_{\infty} \leq C(\Omega)\mathscr{S}^{hyper}(y).
\end{equation}
In order to give existence results independent of this assumption, we restricted ourselves to the case of injective transformations. Although this restriction can be motivated by a mass-preservation demand, which is in our model only granted for injective transformations, the numerical realization can again be challenging. We proposed a framework with injective Dirichlet-boundary conditions to guarantee injectivity for the whole domain, but an extension to a less restrictive model might be of interest (see e.g. \cite{Ashburner2007} for a diffeomorphic registration framework).

We thoroughly tested the method on an artificial deblurring example and showed superiority to several other reconstruction methods. By applying the method on  software-phantom data we gave a proof-of-concept for the applicability for real data. Although there are still difficulties to deal with the movement of really small objects the method seems to be well-suited for motion-corrected reconstruction of clinical data, especially with a low signal-to-noise ratio. Again a thorough evaluation on clinical data is the next step to go, as well as finding new means to assess the quality of a found transformation in motion correction.

Motion-corrected reconstruction with alternating minimization has been done e.g. by Mair et al. \cite{B.A.Mair2006}, although neither the concept of mass-preservation nor TV regula\-rization is imposed. To the best of our knowledge the work of Blume et al. \cite{Blume2010TMI} is most closely related to the presented framework: In \cite{Blume2010TMI} the authors propose a similar general framework to reconstruction with simultaneous motion estimation and incorporate a local invertibility constraint in \cite{Blume2012}. While the focus of Blume et al. lies on the actual implementation of a reconstruction method with a parametric B-spline transformation model, our main contribution are:

\begin{itemize}
\item A thorough analysis of the motion-corrected reconstruction problem with mass-preserving transformation model and appropriate regularization for density image and motion.
\item A framework for the numerical solution of the motion corrected reconstruction problem.
\end{itemize}

In the following we present some open questions, which can motivate further research in this field.

\begin{itemize}
\item Incorporation of attenuation correction: In general there is only an attenuation map for one gate available, so it is a possibility to deform said map for the reconstruction of the other gates. Challenges lie in the implementation as well as the analysis.
\item Incorporating of a priori information into the reconstruction framework. This can be either structural information via MR images \cite{Ehrhardt2015} or information on the motion as boundary values \cite{DirkMannweiler2014} or estimated motion from other modalities \cite{Qiao2006}.
\item Another open problem are the convergence properties of the proposed alternating minimization algorithm: Due to Beck \cite{Beck2015} we can guarantee that the sequence generated by our algorithm has a stationary point as accumulation point, if we impose some reasonable conditions on discretization and interpolation. Although we cannot expect convergence to a global minimum due to the nonconvexity of the problem, proximal regularized minimization algorithms \cite{Attouch2010} might improve the performance.
\end{itemize}

\appendix

\section{Weak Diffeomorphisms}

We will present some basic ideas and theorems from \cite{Giaquinta1994}, how to generalize diffeomorphisms for Sobolev mappings, which are not necessarily differentiable. Giaquinta's central idea for a mapping
\begin{equation}
y : \Omega\subset\mathbb{R}^{d} \rightarrow \mathbb{R}^m
\end{equation}

is to use properties of the graph $\mathcal{G} \subset \mathbb{R}^{d}\times \mathbb{R}^{m}$. Based on this idea he introduced Cartesian currents \cite{Giaquinta1989}. As the description of these currents is beyond the scope of this paper, we will only present the results from \cite{Giaquinta1994} related to weak diffeomorphisms and refer to \cite{Giaquinta1994,Giaquinta1998a,Giaquinta2010} for a detailed course on Cartesian currents. Because we do not want to discuss the theory on Cartesian currents, we define a sufficient class of transformations, which contains transformations fulfilling some rather complicated requirements from the theory of Cartesian currents.

\begin{definition}
We define the following two subclasses of Sobolev mappings:
\begin{align*}
A_{p,q}(\Omega):=&\{y\in W^{1,p}(\Omega;\mathbb{R}^m)\,|\,\operatorname{cof}(\nabla y)\in L^q\},\\
A_{p,q}^{+}(\Omega):=&\{y\in A_{p,q}(\Omega)\,|\,\operatorname{det}(\nabla y) > 0 \quad\text{ a.e.}\}.
\end{align*}
\end{definition}

Next we introduce weak inverses as in \cite{Giaquinta1994}:
\begin{definition}[Weak inverse]
\label{defi:weak_inverse}
Given a measurable map
\begin{equation} 
y : \Omega  \rightarrow \hat{\Omega} \qquad \lambda(\Omega)>0 \quad\lambda(\hat{\Omega})>0.
\end{equation}
We say that
\begin{itemize}
\item[1. ] $y$ is weakly invertible with weak inverse $\hat{y}$, if and only if
\end{itemize}
\begin{alignat}{2}
&\hat{y}(y(x))=x \qquad & \text{for almost every } x\in \Omega , \\
&y(\hat{y}(z))=z \qquad & \text{for almost every } z\in \hat{\Omega}.
\end{alignat}

\begin{itemize}
\item[2. ] $y$ is a weak one-to-one transformation, iff there exists a measurable map
\end{itemize}
\begin{equation*}
\hat{y}:\hat{\Omega} \rightarrow \Omega,
\end{equation*}
\begin{itemize}
\item[] such that
\end{itemize}

\begin{center}
\begin{itemize}
 \item[a)] $y$ and $\hat{y}$ fulfill Lusin's condition \eqref{eq:lusin_condition}.
\item[b)] $y$ and $\hat{y}$ are the inverses of the respective other.\\
\end{itemize}
\end{center}
\end{definition}

The next theorem provides some properties of the inverse of a mapping:

\begin{theorem}
Let $y:\Omega \rightarrow \hat{\Omega}$ be a weakly invertible map with inverse $\hat{y}$. Suppose that
\begin{itemize}
\item[(i)] $y$ satisfies Lusin's condition \eqref{eq:lusin_condition},
\item[(ii)] $y$ is almost everywhere approximately differentiable in $\Omega$.
\end{itemize}
Then $\hat{y}$ is approximately differentiable almost everywhere in $\hat{\Omega}$. Moreover:
\begin{alignat}{2}
&Dy(\hat{y}(z))D\hat{y}(z)=Id_{\hat{\Omega}} \qquad & \text{for } a.e.\quad z \in \hat{\Omega} \\
&D\hat{y}(y(x))Dy(x)=Id_{\Omega} \qquad & \text{for } a.e. \quad x\in \Omega  
\end{alignat}
\end{theorem}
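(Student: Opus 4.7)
The plan is to identify a full-measure subset of $\hat{\Omega}$ on which the claimed formulas can be verified by a pointwise change-of-variables argument built on the area formula (Theorem~\ref{theorem:area_formula}).

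First, I would reduce to a nice set of source points. Let $\Omega_0$ be the set of $x\in\Omega$ at which $y$ is approximately differentiable, $x$ is a Lebesgue point of $Dy$, and $\hat{y}(y(x))=x$; by hypothesis this has full measure. To control the determinant, set $E:=\{x\in\Omega_0:\det Dy(x)=0\}$. The area formula gives
$$\int_{\mathbb{R}^d}N_y(z,E)\,dz \;=\; \int_E|\det Dy(u)|\,du \;=\; 0,$$
so $y(E)$ is a null set; since $y(\hat{y}(z))=z$ for a.e.\ $z\in\hat{\Omega}$, the condition $\hat{y}(z)\in E$ forces $z\in y(E)$, and we conclude that $\hat{y}(z)\in\Omega_0\setminus E$ for almost every $z\in\hat{\Omega}$. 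Lusin's condition for $y$ further guarantees that the image of any remaining null set in $\Omega_0$ is null in $\hat{\Omega}$.

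Next, I would prove approximate differentiability at such a $z_0$, with candidate differential $M:=\bigl(Dy(x_0)\bigr)^{-1}$ where $x_0:=\hat{y}(z_0)$. The algebraic identity
$$u-x_0-M\bigl(y(u)-y(x_0)\bigr) \;=\; -M\bigl(y(u)-y(x_0)-Dy(x_0)(u-x_0)\bigr)$$
shows that on the density-one set $A_\varepsilon$ where the approximate differentiability inequality for $y$ at $x_0$ holds with error $\varepsilon|u-x_0|$, one has, after substituting $w=y(u)$, the pointwise bound $|\hat{y}(w)-x_0-M(w-z_0)|\leq\|M\|\,\varepsilon\,|u-x_0|$. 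Invertibility of $Dy(x_0)$ provides a two-sided comparison $|y(u)-z_0|\asymp|u-x_0|$ on $A_\varepsilon$ near $x_0$, so the control transfers to $w\in y(A_\varepsilon)$ near $z_0$. To turn this into the averaged estimate
$$\fint_{B_\rho(z_0)}\frac{|\hat{y}(w)-x_0-M(w-z_0)|}{\rho}\,dw \;\longrightarrow\;0\qquad(\rho\to 0),$$
I would apply the area formula once more (using $N_y\leq 1$ a.e., from weak invertibility) to pull the $w$-integral back to a $u$-integral over $B_r(x_0)$ with Jacobian $|\det Dy(u)|$; the density-one property of $A_\varepsilon$ at $x_0$ together with $x_0$ being a Lebesgue point of $Dy$ delivers the required limit. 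This is exactly approximate differentiability of $\hat{y}$ at $z_0$ with $D\hat{y}(z_0)=M=(Dy(x_0))^{-1}$. The two inverse identities then follow instantly: $Dy(\hat{y}(z))\,D\hat{y}(z)=\operatorname{Id}$ for a.e.\ $z\in\hat{\Omega}$ by construction, and for a.e.\ $x\in\Omega$ the symmetric identity $D\hat{y}(y(x))\,Dy(x)=\operatorname{Id}$ follows by setting $z=y(x)$ and using $\hat{y}(y(x))=x$ on the full-measure set.

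The hard part will be the measure-theoretic bookkeeping in the second paragraph: showing that $y(B_r(x_0)\cap A_\varepsilon)$ has density one at $z_0$ in $\hat{\Omega}$, so that pointwise smallness of the integrand on a density-one set translates into smallness of the averaged integral over $B_\rho(z_0)$. This rests on two ingredients — non-vanishing of $\det Dy(x_0)$ from the preliminary step, which yields the bi-Lipschitz-like comparison that prevents $y(A_\varepsilon)$ from collapsing near $z_0$, and $N_y\leq 1$ a.e.\ so that the area formula gives the volume identity $|y(A)|=\int_A|\det Dy|$. Without both, the change-of-variables conversion from $w$-integrals to $u$-integrals breaks down, which is why the preliminary reduction to $\Omega_0\setminus E$ is indispensable and also the most delicate part of the proof under the weak hypothesis that Lusin's condition is only assumed for $y$.
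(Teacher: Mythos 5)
The paper itself offers no proof of this statement---it is quoted from Giaquinta--Modica--Sou\v{c}ek and justified by citation only---so your reconstruction has to stand on its own. The skeleton (excise the degenerate set via the area formula, differentiate $\hat{y}$ at $z_0=y(x_0)$ with candidate differential $(Dy(x_0))^{-1}$, transfer averages between $w$ and $u$ using essential injectivity) is the right one, but there are two genuine gaps. The more serious one is your last sentence, where the identity $D\hat{y}(y(x))\,Dy(x)=\mathrm{Id}$ for a.e.\ $x$ is obtained ``by setting $z=y(x)$''. The first identity holds on a full-measure set $G\subset\hat{\Omega}$, and you need $y(x)\in G$ for a.e.\ $x$, i.e.\ that $y$ pulls null sets back to null sets. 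By the area formula this holds precisely off your set $E=\{\det Dy=0\}$ (one gets $\int_{y^{-1}(N)}|\det Dy|\,\mathrm{d}x=0$ for $N$ null), and on $E$ the identity cannot hold at all, since a singular matrix admits no left inverse. The hypotheses as transcribed do not force $|E|=0$: in one dimension a strictly increasing, absolutely continuous $y$ with $y'=0$ on a fat Cantor set is weakly invertible, satisfies Lusin's condition and is a.e.\ differentiable, yet the second identity fails on a set of positive measure. The original theorem in the cited source carries the additional hypothesis $|\det Dy|>0$ a.e.\ (which the paper's admissible class supplies anyway); you should restore it, after which your own area-formula computation delivers the missing $N^{-1}$ property and the second identity really does follow from the first.

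The second gap concerns the notion of approximate differentiability. With the paper's definition (an averaged $L^1$ limit), it is not enough that $y(A_\varepsilon\cap B_r(x_0))$ have density one at $z_0$: on the exceptional set $S_\rho\subset B_\rho(z_0)$ the integrand $|\hat{y}(w)-x_0-M(w-z_0)|/\rho$ is only $O(1/\rho)$, so you would need $|S_\rho|/|B_\rho(z_0)|=o(\rho)$, whereas the density argument you sketch yields only $o(1)$ (of size $O(\varepsilon)+o_r(1)$). What your argument proves is the set-theoretic (Federer) version of approximate differentiability; to reach the averaged version you must either work with that weaker notion (which is what the cited source uses and which suffices for the rest of the paper) or add a separate argument bounding $|\hat{y}(w)-x_0|$ by $O(\rho)$ off $y(A_\varepsilon)$. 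Relatedly, ``Lebesgue point of $Dy$'' presupposes $Dy\in L^1_{\mathrm{loc}}$, which a.e.\ approximate differentiability does not provide; the standard fix is to use the countable decomposition of $\Omega$ into pieces on which $y$ is Lipschitz and to take density points of those pieces instead.
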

\begin{proof}
See \cite[Chapter 3, Theorem 2]{Giaquinta1994}.
\end{proof}

With this at hand Giaquinta et al. \cite{Giaquinta1994} define global invertibility by using properties of the graph of a map.

\begin{definition}[Global invertibility for a.e. approximately differentiable mappings]
Let $y$ be an a.e. approximately differentiable map from $\Omega\subset \mathbb{R}^{d}$ into $\mathbb{R}^m$ with $\operatorname{det}(Dy)\in L^1(\Omega)$ satisfying 
\begin{equation}
\operatorname{det}(Dy) > 0 \qquad \text{a.e. in } \Omega .
\end{equation}
We say $y$ is globally invertible if and only if 
\begin{equation}
\label{eq:global_invertibility_defi}
\int\limits_{\Omega}\phi(x,y(x))\operatorname{det}(Dy(x))\mathrm{d}x\leq\int\limits_{\mathbb{R}^m}\left(\sup\limits_{x\in\Omega}\phi(x,z)\right)\mathrm{d}z
\end{equation}
holds for all $\phi \in C_{c}^{0}(\Omega\times \mathbb{R}^m)$ with $\phi \geq 0$.

Consequently we denote the set of injective functions by $\mathscr{I}(\Omega)$.
\end{definition}

The definition above is related to the area formula; for any function $\phi \in C_{c}^{0}(\Omega\times \mathbb{R}^m)$ we observe:

\begin{equation*}
\int\limits_{\Omega}\phi(x,y(x))\operatorname{det}(Dy)(x)\mathrm{d}x\leq\int\limits_{\Omega}\underbrace{\sup\limits_{x\in\Omega}\phi(x,y(x))}_{=:\psi(y(x))}\operatorname{det}(Dy(x))\mathrm{d}x=\int\limits_{\mathbb{R}^m}\sup\limits_{x\in\Omega}\phi(x,z)N_y(z,\Omega)\mathrm{d}z
\end{equation*}

This illustrates, how the invertibility condition \eqref{eq:global_invertibility_defi} can be violated by functions which are not injective on sets with positive measure.  According to \cite{Giaquinta1994} global invertibility can be defined equivalently in several other ways:

\begin{proposition}
Let $y:\Omega \subset\mathbb{R}^d \rightarrow \mathbb{R}^m$ be a.e. approximately differentiable in $\Omega$ with $\operatorname{det}(Dy)\in L^1(\Omega)$  and $\operatorname{det}(Dy)\geq 0$ a.e. in $\Omega$. Then the following claims are equivalent:

\begin{itemize}
\item[(i)] $y$ is globally invertible.
\item[(ii)] For any $\phi \in C_c^0(\mathbb{R}^m)$ with $\phi\geq 0$ y satisfies the inequality 
\end{itemize}
\begin{equation}
\int\limits_{\Omega}\phi(y(x))\operatorname{det}(Dy)(x)\mathrm{d}x\leq \int\limits_{\mathbb{R}^m}\phi(z)\mathrm{d}z. 
\end{equation}

\begin{itemize}
\item[(iii)] For almost every $z\in\mathbb{R}^m$ we have
\end{itemize}
\begin{equation}
N(y,\Omega,z)\leq 1.
\end{equation}
\begin{itemize}
\item[(iv)] For almost every $z\in \mathbb{R}^m$ we have
\end{itemize}
\begin{equation}
N_y(\Omega,z)=\chi_{y(\Omega)}(z):=\begin{cases}1 & z \in y(\Omega)\\ 0 &z \notin y(\Omega) \end{cases}.
\end{equation}
\begin{itemize}
\item[(v)] We have
\end{itemize}
\begin{equation}
\int\limits_{\Omega} \operatorname{det}(D y)(x)\mathrm{d}x=\mathscr{H}^m(\tilde{y}(\Omega))
\end{equation}
\begin{itemize}
\item[] where $\mathscr{H}^m$ is the m-dimensional Hausdorff measure and $\tilde{y}$ a Lusin representative.
\item[(vi)]The inequality
\end{itemize}
\begin{equation}\int\limits_{\Omega} \operatorname{det}(D y)(x)\mathrm{d}x\leq\mathscr{H}^m(y(\Omega))
\end{equation}
\begin{itemize}
\item[] holds for any representative of $y$.
\end{itemize}
\end{proposition}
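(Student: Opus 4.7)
The central tool will be the area formula from Theorem \ref{theorem:area_formula}, i.e.\ the identity
\begin{equation*}
\int_\Omega u(y(x))\operatorname{det}(Dy(x))\,\mathrm dx \;=\; \int_{\mathbb{R}^m} u(z)\,N_y(z,\Omega)\,\mathrm dz \qquad (u\geq 0),
\end{equation*}
applied in Step~1 with $u\equiv 1$, which gives the key baseline identity $\int_\Omega \det(Dy)\,\mathrm dx = \int N_y(\cdot,\Omega)\,\mathrm dz$. I would then close a cycle (i)$\Rightarrow$(ii)$\Rightarrow$(iii)$\Leftrightarrow$(iv)$\Rightarrow$(v)$\Rightarrow$(vi)$\Rightarrow$(i).

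For (i)$\Rightarrow$(ii), specialize the defining inequality of global invertibility to integrands $\phi$ that are independent of $x$: the supremum over $x$ is then just $\phi(z)$, and the claim drops out. For (ii)$\Rightarrow$(iii), combine (ii) with the area formula to obtain $\int \phi(z)(N_y(z,\Omega)-1)\,\mathrm dz\leq 0$ for all nonnegative $\phi\in C_c^0$; if the set $\{N_y\geq 2\}$ had positive Lebesgue measure we could localize a nonnegative bump $\phi$ there and contradict the inequality, so $N_y\leq 1$ a.e. The equivalence (iii)$\Leftrightarrow$(iv) is essentially bookkeeping: $N_y$ is $\mathbb{N}_0$-valued, always satisfies $N_y\geq \chi_{y(\Omega)}$ (points in the image have at least one preimage), and equality of those two functions a.e.\ is exactly $N_y\leq 1$ a.e.

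For (iii)$\Rightarrow$(v), apply the baseline identity and substitute $N_y=\chi_{y(\Omega)}$; using a Lusin representative $\tilde y$, Lusin's condition guarantees $\mathscr H^m(\tilde y(\Omega))=\lambda(\tilde y(\Omega))$ (since $\mathscr H^m$ coincides with Lebesgue measure on $\mathbb{R}^m$), and $\{N_y\geq 1\}$ agrees with $\tilde y(\Omega)$ up to a Hausdorff-null set, yielding the asserted equality. For (v)$\Rightarrow$(vi), observe that any representative $y'$ differs from $\tilde y$ only on a null set $N\subset\Omega$, so $y'(\Omega)\supseteq \tilde y(\Omega\setminus N)$, and Lusin's condition for $\tilde y$ gives $\mathscr H^m(\tilde y(N))=0$; hence $\mathscr H^m(y'(\Omega))\geq \mathscr H^m(\tilde y(\Omega))$, and the inequality in (vi) follows from (v).

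Finally, for (vi)$\Rightarrow$(i), chain the inequalities
\begin{equation*}
\int_\Omega \det(Dy)\,\mathrm dx \;\leq\; \mathscr H^m(\tilde y(\Omega)) \;=\; \lambda\bigl(\{N_y\geq 1\}\bigr) \;\leq\; \int N_y(z,\Omega)\,\mathrm dz \;=\; \int_\Omega \det(Dy)\,\mathrm dx,
\end{equation*}
forcing $N_y\leq 1$ a.e. Then for any nonnegative $\phi\in C_c^0(\Omega\times\mathbb{R}^m)$, setting $\psi(z):=\sup_{x\in\Omega}\phi(x,z)$, the area formula gives
\begin{equation*}
\int_\Omega \phi(x,y(x))\operatorname{det}(Dy(x))\,\mathrm dx \;\leq\; \int_\Omega \psi(y(x))\operatorname{det}(Dy(x))\,\mathrm dx \;=\; \int \psi(z)\,N_y(z,\Omega)\,\mathrm dz \;\leq\; \int \psi(z)\,\mathrm dz,
\end{equation*}
which is exactly the defining inequality of global invertibility. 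The principal obstacle I anticipate is the (v)$\Rightarrow$(vi) step: one must handle the fact that $\tilde y(\Omega)$ and $y'(\Omega)$ need not be Borel, so that matching $\mathscr H^m$ with Lebesgue measure and controlling the null-set discrepancy via Lusin's condition has to be done with some care; the remaining implications are essentially direct applications of the area formula.
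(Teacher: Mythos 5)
Your argument is correct, but it is worth noting that the paper does not prove this proposition at all: it simply cites Giaquinta--Modica--Sou\v{c}ek \cite{Giaquinta1994}, Chapter 5, Proposition 1. What you have written is essentially a self-contained reconstruction of that proof, built entirely on the area formula (Theorem \ref{theorem:area_formula}) already available in the paper, and the cyclic scheme (i)$\Rightarrow$(ii)$\Rightarrow$(iii)$\Leftrightarrow$(iv)$\Rightarrow$(v)$\Rightarrow$(vi)$\Rightarrow$(i) closes properly: in particular your chain of inequalities in (vi)$\Rightarrow$(i) correctly forces $\int(N_y-\chi_{\{N_y\geq 1\}})\,\mathrm{d}z=0$ and hence $N_y\leq 1$ a.e., after which the passage back to the defining inequality via $\psi(z)=\sup_x\phi(x,z)$ is exactly right. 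Two small points deserve tightening, though neither is a genuine gap. First, in (i)$\Rightarrow$(ii) a $\phi$ depending only on $z$ is not compactly supported in $\Omega\times\mathbb{R}^m$; you should test with $\eta_n(x)\phi(z)$ for a cutoff exhaustion $\eta_n\uparrow 1$ in $C_c^0(\Omega)$ and pass to the limit by monotone convergence, noting $\sup_x\eta_n(x)\phi(z)\leq\phi(z)$. Second, in (ii)$\Rightarrow$(iii) the integrand $N_y-1$ changes sign, so ``localizing a bump'' on $\{N_y\geq 2\}$ requires either a Lebesgue density point argument or, more cleanly, the observation that $N_y\in L^1$ (by the baseline identity), so that $(N_y-1)\,\mathrm{d}z$ is a locally finite signed measure and nonpositivity against all nonnegative $C_c^0$ test functions gives $N_y\leq 1$ a.e.\ by Riesz representation. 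You correctly identify the measurability of $\tilde{y}(\Omega)$ as the delicate point in (v)$\Rightarrow$(vi); this is handled by decomposing $\Omega$ into a null set plus countably many compacta on which $\tilde{y}$ is continuous and invoking Lusin's condition on the null part.
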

\begin{proof}
See \cite[Chapter 5, Proposition 1]{Giaquinta1994}
\end{proof}

Having this in mind we can now define a norm for the class of weak diffeomorphisms:

\begin{definition}[Norm for weak diffeomorphisms]
For any almost everywhere approximately differentiable map $y:\Omega\subset \mathbb{R}^d \rightarrow \mathbb{R}^m$ we set
\begin{equation}
|M(Dy)|:=\left(1+|Dy|^2+|\operatorname{cof}(Dy)|^2+|\operatorname{det}(Dy)|^2\right)^{\frac{1}{2}}
\end{equation}
and define
\begin{equation}
\|y\|_{\operatorname{dif}^{p,q}}:=\int\limits_{\Omega}\left(|y|^p+|M(Dy)|^p+\frac{|M(Dy)|^q}{|\operatorname{det}(Dy)|^{q-1}}\right)\mathrm{d}x.
\end{equation}

\end{definition}

Now we can define the class of weak diffeomorphisms:

\begin{definition}[Space of weak diffeomorphisms]
We say that a map $y:\Omega\subset \mathbb{R}^d \rightarrow \mathbb{R}^m$ belongs to the class $\tilde{\operatorname{dif}}^{p,q}(\Omega,\mathbb{R}^m)$ for $p,q\geq 1$, if and only if:
\begin{enumerate}
\item  $|M(Dy)|\in L^p$.
\item $y$ has a closed graph in $\Omega \times \mathbb{R}^m$.
\item $\operatorname{det}(Dy)>0$ a.e. in $\Omega$.
\item $y$ is globally invertible.
\item $\|y\|_{\operatorname{dif}^{p,q}}<\infty$.
\end{enumerate}

A map only fulfilling the first three conditions is called a  weak local diffeomorphism.
\end{definition}

\begin{rem}
Since giving a thorough definition of the second property requires insight in the theory of Cartesian currents, we will not elaborate further on this subject. Details can be found e.g. in \cite{Giaquinta1989,Giaquinta1998a,Giaquinta2010}. However we can state that $y\in A_{d-1,\frac{d}{d-1}}$ is sufficient \cite{Giaquinta1994a} but not necessary \cite{Giaquinta1998} to guarantee the closedness of the graph.
\end{rem}

\end{document}